\newcommand{\msc}[1]{\begin{center}MSC2000: #1.\end{center}}
\def\COMMENT#1{}
\def\TASK#1{}
\def\noproof{{\unskip\nobreak\hfill\penalty50\hskip2em\hbox{}\nobreak\hfill%
        $\square$\parfillskip=0pt\finalhyphendemerits=0\par}\goodbreak}
\def\endproof{\noproof\bigskip}
\newdimen\margin   
\def\textno#1&#2\par{%
    \margin=\hsize
    \advance\margin by -4\parindent
           \setbox1=\hbox{\sl#1}%
    \ifdim\wd1 < \margin
       $$\box1\eqno#2$$%
    \else
       \bigbreak
       \hbox to \hsize{\indent$\vcenter{\advance\hsize by -3\parindent
       \sl\noindent#1}\hfil#2$}%
       \bigbreak
    \fi}
\def\proof{\removelastskip\penalty55\medskip\noindent{\bf Proof. }}
\newtheorem{thm}{Theorem}[section]
\newtheorem{define}[thm]{Definition}
\newtheorem{lem}[thm]{Lemma}
\newtheorem{claim}[thm]{Claim}
\newtheorem{fact}[thm]{Fact}
\newtheorem{conj}[thm]{Conjecture}
\newtheorem{prop}[thm]{Proposition}
\newtheorem{ques}[thm]{Question}
\newtheorem*{alg}{Algorithm}
\newtheorem*{thm*}{Theorem}
\newtheorem*{define*}{Definition}
\newtheorem*{examp*}{Example}
\newtheorem*{lem*}{Lemma}
\newtheorem*{claim*}{Claim}
\newtheorem*{fact*}{Fact}
\newtheorem*{col*}{Corollary}
\newtheorem*{conj*}{Conjecture}
\newtheorem*{tregsaythm*}{Theorem~\ref{tregsaythm}}
\newcommand{\rp}{\scriptsize\left(\!\!  
                \begin{array}{c} 
                r \\
                p
                \end{array} 
                \!\!\right)}
\begin{document}

\title{A degree sequence version of the K\"uhn--Osthus tiling theorem}
\date{}
\author{Joseph Hyde and Andrew Treglown}
\thanks{
JH: University of Birmingham, United Kingdom, {\tt jfh337@bham.ac.uk}. \\
\indent AT: University of Birmingham, United Kingdom, {\tt a.c.treglown@bham.ac.uk}.}

\begin{abstract}
A fundamental result of K\"uhn and Osthus [The minimum degree threshold for perfect
graph packings, Combinatorica,  2009] determines up to an additive constant the minimum degree threshold that forces a graph to contain a perfect $H$-tiling.
We prove a degree sequence version of this result which allows for a significant number of vertices to have lower degree.
\end{abstract}
\maketitle
\msc{05C35, 05C70}

\section{Introduction} \label{Introduction}
\subsection{Minimum degree conditions forcing tilings}
A substantial branch of extremal graph theory concerns the study of \emph{tilings}.
Given two graphs $H$  and $G$, an \emph{$H$-tiling} in $G$ 
is a collection of vertex-disjoint copies of $H$ in $G$. An
$H$-tiling is called \emph{perfect} if it covers all the vertices of $G$.
Perfect $H$-tilings are also often referred to as \emph{$H$-factors}, \emph{perfect $H$-packings} or \emph{perfect $H$-matchings}.

In the case when $H$ has a component on at least $3$ vertices, the decision problem of whether a graph contains a perfect $H$-tiling is NP-complete~\cite{hell}.
Thus, there has been a focus on establishing sufficient conditions to force a perfect $H$-tiling.
The seminal Hajnal--Szemer\'edi theorem~\cite{hs}
 characterises the minimum degree that ensures a graph contains a perfect $K_r$-tiling. 

\begin{thm}[Hajnal and Szemer\'edi~\cite{hs}]\label{hs}
Every graph $G$ whose order $n$
is divisible by $r$ and whose minimum degree satisfies $\delta (G) \geq (1-1/r)n$ contains a perfect $K_r$-tiling. Moreover, there are $n$-vertex graphs $G$
 with $\delta (G) = (1-1/r)n-1$ that do not contain a perfect $K_r$-tiling.
\end{thm}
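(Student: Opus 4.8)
We first dispose of the extremal (``Moreover'') part. Write $n=rk$ and let $G^{*}$ be the join of a complete graph $K_a$ on $a:=(1-1/r)n-1$ vertices with an independent set $I$ on the remaining $b:=n/r+1$ vertices. Every vertex of $I$ has degree exactly $(1-1/r)n-1$ while every vertex of $K_a$ has degree $n-1$, so $\delta(G^{*})=(1-1/r)n-1$. Since $I$ is independent, any copy of $K_r$ in $G^{*}$ meets $I$ in at most one vertex; a perfect $K_r$-tiling consists of exactly $n/r$ copies of $K_r$ and so covers at most $n/r<|I|$ vertices of $I$ -- a contradiction. Hence $G^{*}$ has no perfect $K_r$-tiling.

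For the positive statement it is cleanest to pass to the complement. Put $H:=\overline{G}$ and $k:=n/r$; then $|V(H)|=n=rk$ and $\Delta(H)=n-1-\delta(G)\le n-1-(1-1/r)n=n/r-1=k-1$, and a perfect $K_r$-tiling of $G$ is precisely a partition of $V(H)$ into $k$ independent sets each of size $r$, i.e.\ an \emph{equitable $k$-colouring} of $H$. So it suffices to prove the following, by induction on $|V(H)|$: every graph $H$ with $\Delta(H)\le k-1$ and at most $rk$ vertices has a proper $k$-colouring in which every colour class has size at most $r$; when $|V(H)|=rk$ this then forces all classes to have size exactly $r$. The base case $|V(H)|\le r$ is immediate, since any proper $k$-colouring (one exists as $\chi(H)\le\Delta(H)+1\le k$) has all classes of size at most $r$.

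For the inductive step I would delete a vertex $v$ and apply the induction hypothesis to $H-v$, obtaining classes $V_1,\dots,V_k$ of sizes at most $r$ summing to $|V(H)|-1<rk$, so some class is not full. If $v$ has a non-neighbour in some non-full class, place $v$ there and we are done; the hard case is that every class disjoint from the (at most $k-1$) neighbours of $v$ is already full, so some non-full class $V_j$ contains a neighbour of $v$. The plan here -- following the switching strategy of Hajnal--Szemer\'edi, and its streamlining by Kierstead--Kostochka -- is to introduce an auxiliary \emph{reachability} relation among the colour classes: starting from $V_j$, one repeatedly performs \emph{moves} that relocate a single vertex from one class to another while keeping the colouring proper and all class sizes at most $r$, always trying to reach a state in which some non-full class is disjoint from $N(v)$. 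If any such sequence of moves succeeds, $v$ can then be absorbed, contradicting the choice of $H$ as a counterexample. Otherwise the set $\mathcal{A}$ of classes reachable by moves is ``stuck'': every vertex of $T:=\bigcup_{V_i\in\mathcal{A}}V_i$ is blocked, which forces each such vertex to have neighbours in many classes of $\mathcal{A}$. A double-counting over the pairs (vertex of $T$, class of $\mathcal{A}$) then produces a vertex of $H$ with more than $k-1$ neighbours, contradicting $\Delta(H)\le k-1$.

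The main obstacle is precisely this last step: setting up the reachability relation and the accompanying discharging/counting so that ``stuck'' genuinely yields a vertex of degree at least $k$. This is the technical heart of the Hajnal--Szemer\'edi theorem, and it is delicate because the bound $\Delta(H)\le k-1$ is used with essentially no slack -- which is exactly why a naive induction that deletes a single vertex, or deletes one copy of $K_r$, cannot work, as either costs one more in the degree condition than one can afford. (For large $n$ one could instead derive the result from the Regularity and Blow-up Lemmas together with a stability analysis of the extremal configuration above, but the switching argument has the advantage of establishing the bound for all $n$ simultaneously.)
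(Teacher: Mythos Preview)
The paper does not prove this theorem: it is quoted as a classical result of Hajnal and Szemer\'edi and used only as background. So there is no ``paper's proof'' to compare against; I can only assess your attempt on its own.

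Your extremal construction is correct and is the standard one. The passage to the complement and the reformulation as equitable $k$-colouring of a graph $H$ with $\Delta(H)\le k-1$ is also correct and is exactly the setting in which Hajnal--Szemer\'edi (and later Kierstead--Kostochka) work.

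The gap is that from this point on you give an outline rather than a proof. You describe a reachability/switching scheme and assert that if it gets ``stuck'' then a double count produces a vertex of degree at least $k$, but you do not define the reachability relation precisely, do not specify which moves are allowed, and do not carry out the counting. As you yourself note, the bound $\Delta(H)\le k-1$ is tight, so the argument cannot be completed by hand-waving: one needs the actual bipartite auxiliary structure (solo vertices, the digraph on classes, the distinction between accessible and terminal classes, etc.) and the accompanying inequalities. What you have written is a correct high-level plan --- essentially the introduction to the Kierstead--Kostochka proof --- but the theorem is not established until that plan is executed. If you intend this as a full proof, the technical heart is missing; if you intend it as a pointer to the literature, then citing \cite{hs} (or Kierstead--Kostochka) is what the paper itself does.
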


The following result of Alon and Yuster~\cite{ay} shows that any sufficiently large graph $G$ with minimum degree slightly above that in Theorem~\ref{hs} in fact contains a perfect $H$-tiling for any graph $H$ with $\chi (H)=r$.
\begin{thm}[Alon and Yuster~\cite{ay}]\label{ay1}
Suppose that $\gamma >0$ and $H$ is a graph with $\chi (H)=r$. Then there exists an integer $n_0=n_0 (\gamma ,H)$
such that the following holds. If $G$ is a graph whose order $n \geq n_0$ is divisible by $|H|$ and
$$\delta (G) \geq (1-1/r+\gamma )n$$
then $G$ contains a perfect $H$-tiling.
\end{thm}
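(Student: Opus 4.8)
The plan is to prove Theorem~\ref{ay1} by the Regularity--Blow-up method: apply Szemer\'edi's Regularity Lemma to $G$, pass the minimum degree condition down to the reduced graph, invoke the Hajnal--Szemer\'edi theorem (Theorem~\ref{hs}) to obtain a perfect $K_r$-tiling of that reduced graph, and finally use the Blow-up Lemma to convert each ``blown-up $K_r$'' into a perfect $H$-tiling of the corresponding clusters.

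In detail, I would first fix constants $1/n_0 \ll \varepsilon \ll d \ll \gamma, 1/|H|$ and apply the degree form of the Regularity Lemma to $G$, obtaining clusters $V_1,\dots,V_k$ of common size $m$, an exceptional set $V_0$ with $|V_0|\le \varepsilon n$, and the reduced graph $R$ on $\{V_1,\dots,V_k\}$ in which $V_iV_j$ is an edge precisely when $(V_i,V_j)$ is $\varepsilon$-regular of density at least $d$. A routine calculation shows that the minimum degree transfers: each vertex of $G$ loses at most $(\varepsilon+d)n+|V_0|$ of its neighbours to $V_0$, to irregular pairs and to sparse pairs, so $\delta(R)\ge(1-1/r+\gamma/2)k$. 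Discarding at most $r-1$ clusters into $V_0$ so that $r\mid k$, Theorem~\ref{hs} applied to $R$ yields a perfect $K_r$-tiling $\mathcal{K}$: a partition of the surviving clusters into groups of $r$, each group spanning a blown-up $K_r$ in $G$ all of whose pairs are $\varepsilon$-regular of density at least $d$.

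Next I would absorb the exceptional vertices and repair divisibility. Fix a proper $r$-colouring of $H$ with colour-class sizes $a_1,\dots,a_r$, so that $a_1+\dots+a_r=|H|$; note that placing one copy of $H$ inside a blown-up $K_r$ along such a colouring and then cycling through the $r$ rotations of the colour pattern deletes exactly $|H|$ vertices from \emph{every} cluster of that group. Using $\delta(G)\ge(1-1/r+\gamma)n$, one greedily builds a small $H$-tiling $\mathcal{M}$ covering every vertex of $V_0$: any partially embedded copy of $H$ together with an as-yet-uncovered vertex has linearly many common neighbours inside the clusters, so the copies can be completed one vertex at a time while removing only $o(m)$ vertices from any single cluster. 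Then, deleting a bounded number of further ``rotation'' copies of $H$ inside each $K_r$-group as above, one arranges that within each group all $r$ clusters have a common size $m'$ with $|H|\mid m'$ and $m'\ge(1-o(1))m$.

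Finally, for each $K_r$-group we have $r$ clusters of common size $m'$, pairwise $\varepsilon$-regular of density at least $d$, with $|H|\mid m'$. The complete $r$-partite graph with all parts of size $m'$ has a perfect $H$-tiling --- take $m'/|H|$ rounds, each round placing the $r$ rotations of the fixed colouring of $H$, so that every part contributes exactly $a_1+\dots+a_r=|H|$ vertices per round --- and hence, after the standard step of deleting $o(m')$ vertices from each cluster to upgrade the regular pairs to super-regular, the Blow-up Lemma produces a perfect $H$-tiling of the $r$-partite subgraph of $G$ induced on these clusters. Combining these tilings over all $K_r$-groups together with $\mathcal{M}$ covers $V(G)$, which gives the desired perfect $H$-tiling once $n\ge n_0(\gamma,H)$. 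The step I expect to be the main obstacle is this absorption/divisibility step: one must simultaneously cover $V_0$, equalise the cluster sizes inside each $K_r$-group, and make them divisible by $|H|$, all while deleting only $o(m)$ vertices from any cluster so that regularity --- indeed super-regularity --- survives and the Blow-up Lemma still applies; the Regularity Lemma and Theorem~\ref{hs} enter only as black boxes.
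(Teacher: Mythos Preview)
The paper does not prove Theorem~\ref{ay1}; it is quoted from Alon--Yuster as background and no argument for it is given here, so there is no ``paper's proof'' to compare against. Your outline is the standard Regularity--Blow-up route to this result, and the global architecture (Regularity Lemma $\to$ Hajnal--Szemer\'edi on the reduced graph $\to$ absorb the exceptional set $\to$ Blow-up Lemma on each $K_r$-block) is the right one.

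There is, however, a genuine gap exactly where you predict the obstacle. A ``rotation round'' places $r$ cyclically shifted copies of $H$ in a $K_r$-group and removes \emph{the same number} $|H|$ of vertices from every cluster of that group; hence it preserves each pairwise difference $m_i-m_j$. Rotations can shift a common residue modulo $|H|$, but they cannot \emph{equalise} cluster sizes that became unequal during the greedy absorption of $V_0$. For a concrete failure take $H=2K_2$ (so $r=2$): every $2$-colouring has equal classes, so any copy of $H$ placed inside a bipartite $K_2$-group removes two vertices from each side, and an imbalance created by absorption can never be corrected from within the group by the mechanism you describe. More generally, whenever all optimal colourings of $H$ are balanced, single-group placements of $H$ give you no leverage on the differences $m_i-m_j$.

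The known repairs all go beyond rotations: either one absorbs each $v\in V_0$ via a larger balanced gadget chosen so that cluster sizes stay equal within each group, or one allows copies of $H$ to straddle two different $K_r$-groups and ``shifts remainders'' between groups along copies of $K_r$ in $R$ --- precisely the device carried out in Section~\ref{sectdivisible} of this paper. Your final paragraph correctly identifies equalisation/divisibility as the crux; what is missing is a mechanism that actually changes $m_i-m_j$, not just the common residue.
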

 For many graphs $H$ the minimum degree condition in Theorem~\ref{ay1} is best-possible up to the  term $\gamma n$.
Indeed, for many graphs $H$ there are so-called \emph{divisibility barrier} constructions $G$ on $n$ vertices that have minimum degree $(1-1/\chi (H))n-1$ but fail to contain a perfect $H$-tiling (see~\cite[Section 2]{kuhn2}).
 However, Koml\'{o}s, S\'ark\"{ozy} and  Szemer\'{e}di~\cite{kssAY} proved that the  term $\gamma n$ in Theorem~\ref{ay1} can be replaced
 with a constant dependent only on $H$.
Further, as discussed shortly, K\"uhn and Osthus~\cite{kuhn, kuhn2} proved that there are also many graphs $H$ for which one can significantly reduce the minimum degree condition in Theorem~\ref{ay1}.

In a related direction, Koml\'os~\cite{Komlos} showed that if one only requires an $H$-tiling covering \emph{almost} all vertices in the host graph, then one can replace the $\chi (H)$-term in the minimum degree condition of the Alon--Yuster theorem by the so-called \emph{critical chromatic number $\chi _{cr} (H)$ of $H$}. Here $\chi _{cr} (H)$  is defined
as 
\[\chi _{cr} (H):=(\chi (H)-1)\frac{|H|}{|H|-\sigma (H)},\]
where $\sigma (H)$ denotes the size of the smallest possible 
colour class in any $\chi(H)$-colouring of $H$. Note that all graphs $H$ satisfy $\chi (H) -1 < \chi _{cr} (H) \leq \chi (H)$ and $\chi _{cr} (H) = \chi (H)$ precisely when every $\chi (H)$-colouring $c$ of $H$ is balanced (i.e.
the colour classes of $c$ have the same size).
\begin{thm}[Koml\'os~\cite{Komlos}]\label{komcor}
Let $\eta > 0$ and let $H$ be a graph. Then there exists an integer $n_0=n_0(\eta, H)$ such that  every graph $G$ on $n \geq n_0$ vertices
 with
\[\delta (G) \geq \left ( 1 -\frac{1}{\chi _{cr} (H)} \right )n\]
 contains an $H$-tiling covering all but at most $\eta n$ vertices.
\end{thm}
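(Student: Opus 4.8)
The plan is to prove Theorem~\ref{komcor} by a standard absorbing-type / Regularity-method argument, following Koml\'os' original strategy. First I would apply the Szemer\'edi Regularity Lemma to $G$ with a suitably small parameter $\varepsilon$ (chosen in terms of $\eta$ and $|H|$) to obtain an $\varepsilon$-regular partition $V_0, V_1, \dots, V_k$ of $V(G)$, and pass to the associated reduced graph $R$ on vertex set $\{V_1, \dots, V_k\}$ whose edges correspond to $\varepsilon$-regular pairs of density at least, say, $d$. A routine averaging/counting argument shows that the minimum degree hypothesis $\delta(G) \geq (1 - 1/\chi_{cr}(H))n$ is inherited (up to a $o(k)$ loss) by $R$, so $\delta(R) \geq (1 - 1/\chi_{cr}(H) - \gamma')k$ for small $\gamma'$.

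The key combinatorial input is then a purely graph-theoretic statement about the reduced graph: any graph $R$ with $\delta(R) \geq (1 - 1/\chi_{cr}(H) - \gamma')|R|$ contains an almost-perfect "fractional $K_r$-tiling" weighted appropriately, or equivalently a spanning collection of cliques that one can use to embed many vertex-disjoint copies of $H$. More precisely, I would invoke the fact (implicit in Koml\'os' proof, and which follows from the Hajnal--Szemer\'edi theorem applied with a small adjustment, or from a direct weighting argument) that the critical chromatic number is exactly the parameter controlling how efficiently one can pack copies of $K_r$ into the clusters so that the leftover is small: the definition $\chi_{cr}(H) = (\chi(H)-1)\frac{|H|}{|H|-\sigma(H)}$ is precisely calibrated so that in each $K_r$ of the reduced graph one can embed copies of $H$ using the clusters in the ratio $\sigma(H) : |H|-\sigma(H)$ split across $r-1$ clusters, wasting only a vanishing proportion of vertices. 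For each such $K_r$ in $R$, the corresponding $r$ clusters induce a super-regular $r$-partite subgraph (after deleting a few vertices), and one uses the standard embedding lemma / the Blow-up Lemma, or just greedy embedding since $H$ is of fixed size, to tile almost all of those clusters with disjoint copies of $H$.

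Assembling these pieces: the vertices not covered consist of $V_0$ (size at most $\varepsilon n$), the bounded leftover in each cluster from the embedding step (a $\gamma'$-fraction), and the clusters not picked up by the clique cover of $R$. Choosing all parameters appropriately ($\varepsilon \ll \gamma' \ll \eta$, and $1/n_0$ sufficiently small) makes the total uncovered set at most $\eta n$, giving the desired $H$-tiling.

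The main obstacle is the combinatorial lemma on the reduced graph, namely showing that the minimum-degree bound in terms of $\chi_{cr}(H)$ (rather than $\chi(H)$) genuinely suffices to find a near-spanning family of $K_r$'s whose induced cluster-tiles can each be filled with copies of $H$ up to negligible waste. This is where the critical chromatic number enters in an essential way, and it requires carefully balancing the "shape" of the desired packing (allowing unbalanced copies of $H$, with the small colour class of size $\sigma(H)$ placed in one cluster) against what the Hajnal--Szemer\'edi-type degree condition on $R$ can guarantee; one typically argues by first extracting a large $K_{r-1}$-tiling-like structure and then extending, or by a direct fractional relaxation followed by rounding. Everything downstream of that lemma is routine application of regularity and embedding machinery.
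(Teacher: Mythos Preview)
The paper does not prove Theorem~\ref{komcor}: it is quoted as a result of Koml\'os with a citation to~\cite{Komlos}, and serves only as background. There is therefore no proof in the paper to compare your proposal against.

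That said, your sketch is broadly on the right track and follows the standard regularity route. The one place where it is vague to the point of being a gap is the ``key combinatorial input'' on the reduced graph. Talking about an ``almost-perfect fractional $K_r$-tiling weighted appropriately'' or ``first extracting a large $K_{r-1}$-tiling-like structure and then extending'' is not yet a proof, and the Hajnal--Szemer\'edi theorem applied to $R$ only gives $K_{r-1}$'s, not the $K_r$'s you need. The clean formulation, which is how Koml\'os (and this paper, in the generalisation Theorem~\ref{generalalmostmain}) actually proceeds, is to introduce a \emph{bottle graph} $B^*$: the complete $r$-partite graph with one vertex class of size proportional to $\sigma(H)$ and $r-1$ classes of size proportional to $(|H|-\sigma(H))/(r-1)$. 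This $B^*$ satisfies $\chi_{cr}(B^*)=\chi_{cr}(H)$ and contains a perfect $H$-tiling, so it suffices to find an almost-perfect $B^*$-tiling in $R$. That reduction is the real content; once stated this way, the tiling of $R$ by copies of $B^*$ can be obtained by a direct argument (greedily build $K_r$'s and track how much of each cluster is used, or apply Hajnal--Szemer\'edi to a suitable blow-up). Everything downstream---embedding via the Key Lemma and summing the leftovers---is indeed routine, as you say.
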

Note that the minimum degree condition in Theorem~\ref{komcor} is best-possible in the sense that one cannot replace the   $(1 -{1}/{\chi _{cr} (H)})$ term here with any smaller fixed constant (this is a consequence of \cite[Theorem~7]{Komlos}).
Further, Koml\'os~\cite{Komlos} also determined the minimum degree threshold for ensuring a graph $G$ contains an $H$-tiling covering an $x$th proportion of its vertices, for any $x \in (0,1)$.
Shoukoufandeh and  Zhao~\cite{szhao} later proved that the number of uncovered vertices in Theorem~\ref{komcor} can be reduced to a constant dependent only on $H$.

 K\"uhn and Osthus~\cite{kuhn, kuhn2} showed that for many graphs $H$, a minimum degree slightly above that in Koml\'os' theorem actually ensures a \emph{perfect} $H$-tiling. To state their result we need to introduce some notation.
We say that a colouring of $H$ is \emph{optimal} if it uses exactly $\chi (H)=:r$
colours. Let $C_H$ denote the set of all optimal colourings of $H$.
Given an optimal colouring $c$ of $H$, let $x_{c,1} \leq x_{c,2} \leq  \dots \leq x_{c,r}$ denote
the sizes of the colour classes of~$c$. We write $$\mathcal D (c) := \{ x_{c,i+1} - x_{c,i}
\mid  i=1, \dots , r-1\},$$
and let 
$$\mathcal{D}(H) := \underset{c \in C_H}{\bigcup} \mathcal{D}(c).$$ 
We denote by ${\rm hcf}_{\chi} (H)$ the 
highest common factor of all integers in $\mathcal D(H)$. If $\mathcal D (H) =\{ 0 \}$
then we define ${\rm hcf} _{\chi} (H) := \infty $. We write ${\rm hcf}_c(H)$ for the
highest
common factor of all the orders of components of $H$. For non-bipartite graphs $H$
we say that
${\rm hcf} (H)=1$ if ${\rm hcf} _{\chi} (H)=1$. If $\chi (H)=2$ then we say
${\rm hcf} (H)=1$ if ${\rm hcf} _c (H)=1$ and $ {\rm hcf }_{\chi} (H)\leq 2$.
(See~\cite{kuhn2} for some examples.) Set
$$\chi ^* (H):=\begin{cases}
\chi _{cr} (H)& \text{if ${\rm hcf}(H)=1$;}\\
\chi (H)& \text{otherwise.}
\end{cases}$$
Also let $\delta (H,n)$ denote the smallest integer $k$ such that every graph $G$
whose order
$n$ is divisible by $|H|$ and with $\delta (G)\geq k$ contains a perfect $H$-tiling.

When $\text{hcf} (H)=1$, K\"uhn and Osthus showed that $\chi _{cr}(H)$ is the parameter governing the minimum degree condition that ensures a perfect $H$-tiling. When 
$\text{hcf} (H)\not =1$, $\chi (H)$ instead is the relevant parameter.
\begin{thm}[K\"uhn and Osthus~\cite{kuhn2}]\label{kothm1}
For every graph $H$ there exists a constant
$C=C(H)$ such that
$$\left(1-\frac{1}{\chi ^* (H)}\right)n-1 \leq \delta (H,n) \leq
\left(1-\frac{1}{\chi ^* (H)}\right)n+C.$$
\end{thm}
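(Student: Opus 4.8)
The plan is to establish the two inequalities separately, and for the upper bound to split according to whether or not ${\rm hcf}(H)=1$. When ${\rm hcf}(H)\neq1$ we have $\chi^{*}(H)=\chi(H)$, and the bound $\delta(H,n)\le(1-1/\chi(H))n+C$ is exactly the Koml\'os--S\'ark\"ozy--Szemer\'edi strengthening of Theorem~\ref{ay1}, in which the error term $\gamma n$ is replaced by a constant depending only on $H$ (see~\cite{kssAY}); so in that case nothing new is needed. All the genuine content therefore lies in the case ${\rm hcf}(H)=1$, where $\chi^{*}(H)=\chi_{cr}(H)$ and one must show that $\delta(G)\ge(1-1/\chi_{cr}(H))n+C$ already forces a \emph{perfect} $H$-tiling, even though at this minimum degree Theorem~\ref{komcor} only yields an $H$-tiling missing $o(n)$ vertices.

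\emph{Lower bound.} When ${\rm hcf}(H)\neq1$, so $\chi^{*}(H)=\chi(H)=:r$, I would use a divisibility barrier: an essentially complete balanced $r$-partite graph whose part sizes are perturbed by bounded amounts so that the vertex set cannot be partitioned into copies of $H$ for arithmetic reasons, the obstruction being governed by ${\rm hcf}_{c}(H)$ when $H$ is bipartite and by the common divisor of the integers in $\mathcal{D}(H)$ (that is, ${\rm hcf}_{\chi}(H)$) in general; these perturbations cost only an additive constant in the minimum degree. When ${\rm hcf}(H)=1$, so $\chi^{*}(H)=\chi_{cr}(H)$, I would use a space barrier: the complete $r$-partite graph with one part $V_{1}$ of size about $\sigma(H)n/|H|-1$ and the remaining $r-1$ parts almost equal. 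Since $\chi(H)=r$ and $G$ is $r$-partite, every copy of $H$ meets $V_{1}$ in a colour class of an optimal colouring of $H$, hence in at least $\sigma(H)$ vertices, so a perfect $H$-tiling would need $|V_{1}|\ge\sigma(H)n/|H|$, a contradiction; a short calculation shows the minimum degree of this graph is $(1-1/\chi_{cr}(H))n-1$, matching the claimed bound.

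\emph{Upper bound when ${\rm hcf}(H)=1$.} First I would apply Szemer\'edi's Regularity Lemma to $G$, obtaining clusters $V_{1},\dots,V_{k}$, an exceptional set $V_{0}$, and a reduced graph $R$ on $[k]$ with $\delta(R)\ge(1-1/\chi_{cr}(H)-\mu)k$; I would redistribute the vertices of $V_{0}$ into clusters (each has many neighbours in most clusters, so regularity is essentially preserved) and, using the $+C$ slack in the minimum degree, set aside a bounded flexible sub-collection of copies of $H$. The core step concerns $R$: at minimum degree just above $(1-1/\chi_{cr}(H))k$ one shows, by an LP-duality / weight-shifting argument in the spirit of Theorem~\ref{komcor}, that $R$ has a near-perfect fractional tiling by copies of $K_{r}$ \emph{weighted by the colour-class-size vectors of optimal colourings of $H$} --- precisely the regime in which $\chi_{cr}(H)$ rather than $\chi(H)$ suffices. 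I would then round this fractional structure to an integer one, partitioning almost all clusters into $r$-tuples and, within each $r$-tuple, moving $O(1)$ vertices between adjacent clusters so that the cluster sizes lie in a ratio for which an $H$-factor exists; that such a ratio is available uses ${\rm hcf}_{\chi}(H)=1$, namely that combining copies of $H$ coloured by different optimal colourings --- whose colour-class-difference vectors generate $\mathbb{Z}$ --- realises an $H$-factor of a blown-up $K_{r}$ for every admissible vector of cluster sizes. The Blow-up Lemma applied inside each $r$-tuple then gives an $H$-factor there, and the $O(k)=O(1)$ vertices not covered this way are mopped up either by arranging the $r$-tuples to have sizes divisible by $|H|$ exactly or by absorbing them into the flexible collection, the latter relying on ${\rm hcf}_{c}(H)=1$ so that no residual parity obstruction survives. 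This produces a perfect $H$-tiling of $G$.

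The hard part will be the divisibility bookkeeping around this last step: showing that all the local cluster-size adjustments can be performed simultaneously, that the residue left uncovered by the main tiling has exactly the shape the flexible collection can absorb, and that the whole scheme fits inside the minimum degree budget $(1-1/\chi_{cr}(H))n+C$ with no slack to spare --- in particular the reduced graph must still have minimum degree above $1-1/\chi_{cr}(H)$ after $V_{0}$ has been dealt with. Disentangling the hypothesis ${\rm hcf}(H)=1$ into exactly the two features used above --- component orders via ${\rm hcf}_{c}$, colour-class differences via ${\rm hcf}_{\chi}$ --- and handling the extra care needed when $H$ is bipartite or disconnected is where most of the technical effort lies.
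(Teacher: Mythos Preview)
This theorem is not proved in the present paper: it is quoted as a result of K\"uhn and Osthus~\cite{kuhn2}, and the paper explicitly remarks (just after Question~\ref{ques1}) that its own main results do \emph{not} recover Theorem~\ref{kothm1} because of the $\eta n$ error terms. So there is no ``paper's own proof'' to compare against; what the paper does prove is the degree-sequence analogue Theorem~\ref{mainthmsigma}, following the method of~\cite{kuhn} (the asymptotic precursor of~\cite{kuhn2}).

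That said, it is worth contrasting your sketch with the approach actually used in~\cite{kuhn,kuhn2} (and mirrored in Section~\ref{sectsetup} here). You propose, in the reduced graph, an LP/fractional $K_r$-tiling weighted by colour-class vectors, followed by rounding and an absorbing ``flexible sub-collection''. The K\"uhn--Osthus route is different and more concrete: one applies Koml\'os' theorem directly in $R$ to obtain an almost perfect tiling by a fixed \emph{bottle graph} $\hat B$ (a complete $r$-partite graph with one small class), then covers the exceptional vertices greedily by copies of $H$ (not by redistributing $V_0$ into clusters), then performs a divisibility repair---removing a bounded number of further copies of $H$ so that each block $V_G(\hat B_i)$ has order divisible by $|H|$---and finally applies the Blow-up lemma inside each block. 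The hypothesis ${\rm hcf}(H)=1$ enters solely at the divisibility-repair step (via a number-theoretic lemma in the spirit of Theorem~\ref{partitionthm}) and in the lemma guaranteeing that a suitably proportioned complete $r$-partite graph has a perfect $H$-tiling; there is no absorbing structure.

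Your outline is plausible at the $\eta n$ level, but two points deserve caution. First, ``redistribute $V_0$ into clusters'' is not how these proofs handle the exceptional set; one instead covers each $x\in V_0$ by a copy of $H$ using that $x$ has many neighbours in several clusters of some bottle-graph block. Second, achieving the additive constant $C$ (rather than $\eta n$) in~\cite{kuhn2} requires substantially finer control than either your sketch or the argument in this paper provides; the reduced-graph step alone already loses a linear error, so the constant-$C$ version needs additional ideas beyond what you describe.
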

Intuitively speaking, graphs $H$ with  $\text{hcf} (H)=1$ avoid certain divisibility barrier problems when seeking a perfect $H$-tiling, thus ensuring the lower threshold in this case in Theorem~\ref{kothm1}.
Earlier K\"uhn and Osthus~\cite{kuhn} had proven a  version of Theorem~\ref{kothm1} for graphs $H$ with $\chi (H)\geq 3$ and $\text{hcf} (H)=1$; there though the constant $C(H)$ was replaced with a linear error term.
We now state this result explicitly for future reference.
\begin{thm}[K\"uhn and Osthus~\cite{kuhn}]\label{kothm}
Let $\eta > 0$ and $H$ be a graph with $\textnormal{hcf}_{\chi}(H) = 1$ and $\chi(H) =: r \geq 3$. Then there exists an integer $n_0 = n_0(\eta, H)$ such that the following holds. Let $G$ be a graph on $n \geq n_0$ vertices such that $|H|$ divides $n$ and
$$\delta(G) \geq \left(1 - \frac{1}{\chi_{cr}(H)} + \eta\right)n.$$ Then $G$ contains a perfect $H$-tiling.
\end{thm}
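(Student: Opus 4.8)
\medskip
\noindent\textbf{Proof strategy.}
The plan is to run the standard Regularity method for spanning subgraphs: apply Szemer\'edi's Regularity Lemma to $G$, carry out the combinatorial work in the reduced graph, and translate the resulting tiling structure back to $G$ with the Blow-up Lemma. Fix constants $1/n_0 \ll \varepsilon \ll d \ll \eta$ and write $r := \chi(H)$, $h := |H|$. Applying the Regularity Lemma to $G$ yields a partition $V_0, V_1, \dots, V_k$ with $|V_0| \le \varepsilon n$, $|V_1| = \dots = |V_k| =: m$, and all pairs $(V_i, V_j)$ for $1 \le i < j \le k$ being $\varepsilon$-regular; let $R$ be the reduced graph on $[k]$, with $ij \in E(R)$ exactly when $(V_i, V_j)$ is $\varepsilon$-regular of density at least $d$. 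A routine counting argument gives $\delta(R) \ge (1 - 1/\chi_{cr}(H) + \eta/2)k$. Since every graph satisfies $\chi(H)-1 < \chi_{cr}(H) \le \chi(H)$, in particular $\delta(R) > (1 - 1/(r-1))k$; moreover the common neighbourhood in $R$ of any $r-1$ clusters has size more than $\sigma(H)k/h$, where $\sigma(H) = h(1 - (r-1)/\chi_{cr}(H))$, so every $K_{r-1}$ in $R$ extends to many copies of $K_r$ --- this is the concrete reason $\chi_{cr}(H) > r-1$ matters, since a copy of $H$ needs $r$ clusters to live in.

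\medskip
The heart of the argument is to find inside $R$ a \emph{perfect} tiling-type structure that, after blowing each cluster up to size $m$, is partitioned into ``blow-ups of $H$''. Picture a copy of $H$ as occupying $r$ clusters that form a $K_r$ in $R$, with the $i$-th colour class (of size $x_{c,i}$) of some optimal colouring $c$ of $H$ sitting inside one of them. The minimum degree $\delta(R) \ge (1 - 1/\chi_{cr}(H))k$ is exactly Koml\'os's threshold, so, building on the analysis behind Theorem~\ref{komcor} and spending the slack $\eta k/2$, I would obtain a partition of $V(R)$ into complete $r$-partite subgraphs $P_1, \dots, P_t$ (note $t$ is bounded, as $k$ is) whose part sizes $(a_1^j, \dots, a_r^j)$ lie in the ``feasible cone'' permitted by the min-degree condition, in general unbalanced and reflecting $\chi_{cr}(H)$ rather than $\chi(H)$. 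It then remains to realise, for each $j$, a perfect $H$-tiling of the blow-up $K_{ma_1^j, \dots, ma_r^j}$ (after a small adjustment of the part sizes to absorb $V_0$). Here $\textnormal{hcf}_{\chi}(H) = 1$ is essential, and enters as follows: for every part-size vector $(b_1, \dots, b_r)$ that lies comfortably inside this cone and satisfies $h \mid \sum_l b_l$, the blow-up $K_{b_1, \dots, b_r}$ admits a perfect $H$-tiling --- this is the step that needs $\textnormal{hcf}_{\chi}(H) = 1$, via a short number-theoretic argument showing that the sums of (permuted) colour-class-size vectors of copies of $H$, taken over all optimal colourings, already generate the relevant sublattice precisely when the differences of consecutive colour-class sizes have highest common factor $1$. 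Choosing $m$ divisible by a bounded constant $D = D(H)$ (legitimate since $n$ is large and $h \mid n$, after dumping the bounded remainder into $V_0$), one can fill every $P_j$ exactly, combining copies of $H$ from several optimal colourings and permuting their colour classes among the $r$ parts; a bounded number of extra copies of $H$ on a bounded number of extra $K_r$'s, available from the min-degree condition, absorbs any remaining bounded defect.

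\medskip
To conclude I would handle $V_0$: each of its vertices has $\delta(G) > \eta n$ neighbours, hence many neighbours in some cluster, so the vertices of $V_0$ can be distributed among the clusters (perturbing the target part sizes by $o(m)$, which the lattice flexibility above accommodates) while keeping the relevant pairs $\varepsilon'$-regular. A standard cleaning step then makes every pair used by the structure $(\varepsilon, d/2)$-super-regular, and I apply the Blow-up Lemma separately to each piece $P_j$ to embed the prescribed copies of $H$; since $H$ has bounded size this succeeds. The union over all $P_j$ is a perfect $H$-tiling of $G$, valid once $n \ge n_0(\eta, H)$ so that every parameter inequality and divisibility requirement (using $h \mid n$) holds.

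\medskip
I expect the middle step to be the real obstacle. The minimum degree here sits \emph{below} the Hajnal--Szemer\'edi threshold for a perfect $K_r$-tiling of $R$ (Theorem~\ref{hs}), so one cannot simply take such a tiling; one is forced onto the unbalanced, essentially fractional pieces dictated by $\chi_{cr}(H)$, and $\textnormal{hcf}_{\chi}(H) = 1$ is then indispensable to kill the divisibility obstructions this creates. Carrying this out while keeping every correction \emph{local} --- of size bounded in terms of $H$ alone, so the Blow-up Lemma still applies --- is the technical crux; the regularity set-up, the absorption of $V_0$, and the final Blow-up Lemma applications are by now routine.
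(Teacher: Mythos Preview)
Your outline is essentially the K\"uhn--Osthus strategy that the paper also follows (in its proof of the more general Theorem~\ref{mainthmsigma}): Regularity Lemma, an almost-perfect tiling of the reduced graph by unbalanced complete $r$-partite ``bottle'' graphs via Koml\'os's theorem, a number-theoretic divisibility repair using $\textnormal{hcf}_\chi(H)=1$, then the Blow-up Lemma together with the lemma that suitably shaped complete $r$-partite graphs admit perfect $H$-tilings. Your identification of the crux --- that $\delta(R)$ is below the Hajnal--Szemer\'edi threshold, forcing unbalanced pieces and making $\textnormal{hcf}_\chi(H)=1$ indispensable --- is exactly right, and your lattice description of the divisibility step matches the paper's Theorem~\ref{partitionthm}.

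Two points where the paper's execution differs and your sketch is looser. First, the paper does not ``distribute'' $V_0$ into clusters; instead, for each $x\in V_0$ it finds a copy of $B$ \emph{useful for $x$} (meaning $x$ has $\Omega(q)$ neighbours in clusters from $r-1$ different vertex classes of that copy) and uses the Key Lemma to embed a copy of $H$ through $x$ directly. Your redistribution idea is riskier: once you insert $x$ into a cluster you need $x$ to have $\Omega(q)$ neighbours in \emph{every} partner cluster inside its piece for the Blow-up Lemma to apply, and the minimum degree only guarantees this for $r-1$ of the $r$ classes --- so in effect you would be forced back to exactly the ``useful for $x$'' argument. Second, for the divisibility repair the paper is more concrete: it builds an auxiliary graph $F$ on the bottle-tiles (two tiles adjacent when a $K_r$ in $R$ straddles them), shows $R$ still supports such straddling $K_r$'s across components of $F$, and then shifts remainders $\bmod\ h$ along a spanning tree of each component, invoking Theorem~\ref{partitionthm} only to move residue between components. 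Your ``bounded number of extra copies of $H$ on a bounded number of extra $K_r$'s'' is the same mechanism, but the paper's auxiliary-graph bookkeeping is what makes the corrections genuinely local.
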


\subsection{Degree sequence conditions forcing tilings}\label{sectdegseqcond}
As discussed in the previous subsection, the minimum degree conditions in each of the Hajnal--Szemer\'edi theorem, Koml\'os' theorem and the K\"uhn--Osthus theorem are essentially best-possible. However,
this does not mean one cannot seek significant strengthenings of these results. For example,
 Kierstead and Kostochka~\cite{kier} proved an \emph{Ore-type} generalisation of Theorem~\ref{hs} where now one replaces the minimum degree condition with the condition that the sum of the degrees of every pair of non-adjacent vertices in $G$ is at least
$2(1-1/r)n-1$.

The focus of this paper concerns degree sequence conditions that force a perfect $H$-tiling. The study of degree sequence results for tilings was initiated in~\cite{bkt}.
In particular, a conjecture on a degree sequence strengthening of the Hajnal--Szemer\'edi theorem was raised~\cite[Conjecture 7]{bkt}, as well as a degree sequence version of the Alon--Yuster theorem~\cite[Conjecture 8]{bkt}.
In~\cite{Tregs} the second author proved the latter conjecture (also yielding an asymptotic version of Conjecture~7 from \cite{bkt}).
\begin{thm}[Treglown~\cite{Tregs}]\label{conj2} Suppose that $\eta >0$ and $H$ is a graph with $\chi (H)=:r\geq 2$. Then there exists an integer $n_0=n_0 (\eta ,H)$
such that the following holds. If $G$ is a graph whose order $n \geq n_0$ is divisible by $|H|$, and whose degree
sequence $d_1\leq \dots \leq d_n$ satisfies
 $$d_i \geq (r-2)n/r+i +\eta n  \  \text{ for all  } \ i < n/r,$$
then $G$ contains a perfect $H$-tiling. 
\end{thm}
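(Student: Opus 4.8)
The plan is to use the regularity method, reducing the problem to finding a perfect fractional $K_r$-tiling in the constant-sized reduced graph of $G$, transferring the solution back to $G$ with the Blow-up Lemma as in the proof of Koml\'os' theorem (Theorem~\ref{komcor}), and mopping up a small leftover set by an absorbing argument. Throughout, $n$ is large in terms of $\eta$ and $H$, and one fixes constants $\varepsilon \ll d \ll \mu \ll \eta, 1/|H|$. The bipartite case $r=2$ is delicate --- there the lowest-degree vertices may have degree only about $\eta n$ --- and I would treat it separately (or via degree-sequence results for bipartite graph tilings); so assume $r \ge 3$, in which case the hypothesis already gives $\delta(G) \ge (1 - 2/r)n + \eta n = \Omega(n)$, which is all that several of the steps below require.

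First I would reserve a relatively small \emph{robustly absorbing} set $A$: a union of vertex-disjoint copies of $H$ with $|A| \le \mu n$ such that, for every $W \subseteq V(G) \setminus A$ with $|W| \le \mu^2 n$ and $|H| \mid |W|$, the graph $G[A \cup W]$ has a perfect $H$-tiling. This is built in the usual way from `$(v,S)$-absorbers' --- small configurations admitting an $H$-tiling both through the vertex $v$ and through an $(|H|-1)$-set $S$ avoiding $v$ --- selected by a random greedy procedure. The only point where the degree-sequence hypothesis is used rather than a uniform minimum degree is that \emph{every} vertex, including the low-degree ones, lies in $\Omega(n^{|H|-1})$ such absorbers; this holds because $\delta(G) = \Omega(n)$ and, by the hypothesis, any two vertices have $\Omega(n)$ common neighbours. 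Having fixed $A$, set $G' := G - A$; since $|A| \ll \eta n$, $G'$ still satisfies $d_i(G') \ge (r-2)n/r + i + (\eta/2)n$ for all $i < n/r$.

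Next I would apply the Szemer\'edi Regularity Lemma to $G'$, obtaining an exceptional set $V_0$ with $|V_0| \le \varepsilon n$, clusters $V_1, \dots, V_k$ of common size $m$, and a reduced graph $R$ on $[k]$ (an edge for each $\varepsilon$-regular pair of density at least $d$). The first technical point is that $R$ \emph{inherits the degree-sequence condition}: ordering the clusters so that $\deg_R(1) \le \cdots \le \deg_R(k)$, one obtains $\deg_R(j) \ge (r-2)k/r + j + (\eta/4)k$ for all $j < k/r$. Indeed $\deg_{G'}(v) \le \deg_R(j)\,m + 2dn$ for $v \in V_j$, so a cluster of small $R$-degree contains only vertices of small $G'$-degree; since the clusters $V_1, \dots, V_j$ together hold only $jm$ vertices, this forces $\deg_R(j)$ up to essentially $\frac{k}{n}\, d_{jm}(G')$, and the hypothesis on $G'$ finishes it. Note that low-degree vertices spread thinly over many clusters cause no trouble: those clusters merely have low $R$-degree, which is precisely what the inherited condition permits. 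The \emph{combinatorial heart} of the proof is then to show that any graph $R$ on $k$ vertices satisfying this inherited condition has a perfect fractional $K_r$-tiling --- a non-negative weighting of the copies of $K_r$ in $R$ summing to exactly $1$ at every vertex. This is the linear-programming relaxation of a degree-sequence Hajnal--Szemer\'edi statement whose integral version is itself hard (it is the $H = K_r$ case of the present theorem), but the fractional version is tractable: by LP duality it suffices to exclude a fractional blocking set, and the degree-sequence condition --- whose $\eta k$ of slack plays the role a uniform $\Omega(k)$ above the $(1-1/r)k$ threshold would play classically --- rules this out after a short analysis of the near-extremal configurations.

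Given the perfect fractional $K_r$-tiling of $R$, the remainder is standard and runs exactly as in the proof of Koml\'os' theorem: use the fractional tiling (handling low-weight cliques in the usual way) to split each cluster, further subdividing the $r$ pieces inside each $K_r$-copy in the proportions of the colour-class sizes of an optimal colouring of $H$, clean up to super-regularity, and apply the Blow-up Lemma to obtain an $H$-tiling of $G'$ covering all but at most $\mu^2 n$ vertices. The uncovered set $W$ satisfies $|W| \le \mu^2 n$, and counting modulo $|H|$ --- using $|H| \mid n$ and $|H| \mid |A|$ --- gives $|H| \mid |W|$; hence the reserved set $A$ absorbs $W$, and combining its perfect $H$-tiling of $A \cup W$ with the $H$-tiling of $G' - W$ yields a perfect $H$-tiling of $G$. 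I expect the fractional-tiling step to be the main obstacle: one has to extract a global tiling structure from a condition permitting a linear number of vertices to fall well below the threshold, and the LP-duality argument must be made robust to every way those low-degree vertices can be distributed among the clusters. Verifying that even the lowest-degree vertices admit enough absorbers in the first step is the other genuinely hypothesis-sensitive point; the remaining work is routine regularity and blow-up bookkeeping.
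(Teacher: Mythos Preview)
This theorem is not proved in the present paper; it is quoted from Treglown~\cite{Tregs} and used only as a building block for Theorem~\ref{hydetregsdegseqkothm}. So there is no ``paper's own proof'' to compare against here. That said, your proposal contains a genuine gap that would also prevent it from recovering the result in~\cite{Tregs}.

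The problematic step is the absorber construction. You assert that ``any two vertices have $\Omega(n)$ common neighbours'', and use this to guarantee $\Omega(n^{|H|-1})$ absorbers through every vertex. But for $r=3$ the hypothesis only gives $\delta(G)\ge n/3+\eta n$, and two vertices of degree close to $n/3$ need not have any common neighbour at all (the inclusion--exclusion lower bound is $2n/3+2\eta n-n<0$). More generally, for $r\ge 3$ the low-degree vertices can have degree roughly $(1-2/r)n$, and a pair of them may share as few as $(1-4/r)n+O(\eta n)$ neighbours, which is non-positive for $r\in\{3,4\}$. So the standard ``each vertex has many absorbers'' counting fails precisely for the vertices that most need absorbing, and a naive random-greedy absorber set $A$ will not be able to swallow an arbitrary leftover $W$. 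This is not a minor bookkeeping issue: it is exactly the obstacle that makes degree-sequence tiling results substantially harder than their minimum-degree counterparts, and the proof in~\cite{Tregs} works considerably harder to cover the low-degree vertices (using the structure of the reduced graph and a more delicate incorporation argument rather than a black-box absorbing lemma).

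Your outline of the regularity step and the inheritance of the degree sequence by $R$ is fine, and the fractional-$K_r$-tiling idea is in the right spirit. But the absorbing shortcut, as stated, does not go through; you would need either a much more structured absorbing scheme tailored to the degree sequence, or to follow the route in~\cite{Tregs} of handling exceptional vertices directly via the reduced-graph structure.
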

Theorem~\ref{conj2} is a significant strengthening of the Alon--Yuster theorem as it allows for $n/r$ vertices to have degree (significantly) below that required in the latter.
Further Theorem~\ref{conj2} provides the first piece of a degree sequence analogue of the K\"uhn--Osthus theorem.

The main result in this paper deals with the remaining part of this problem, providing a degree sequence condition that forces a perfect $H$-tiling for graphs with $\text{hcf}(H)=1$.
\begin{thm}\label{mainthm}
Let $\eta > 0$ and $H$ be a graph with $\textnormal{hcf}(H) = 1$ and $\chi(H) =: r \geq 2$. Let $\sigma:= \sigma (H)$,  $h := |H|$ and $\omega := \left(h - \sigma\right)/(r-1)$.
Then there exists an integer $n_1 = n_1(\eta, H)$ such that the following holds. Let $G$ be a graph on $n \geq n_1$ vertices such that $h$ divides $n$ and $G$ has degree sequence $d_1 \leq \ldots \leq d_n$ such that

$$d_i \geq \left(1 - \frac{\omega+\sigma}{h}\right)n + \frac{\sigma}{\omega}i + \eta n\ \ \mbox{for all \ $1 \leq i \leq \frac{\omega n}{h}$.}$$
Then $G$ contains a perfect $H$-tiling. %

\end{thm}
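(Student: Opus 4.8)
The plan is to follow the standard two-stage strategy for tiling theorems: first use an absorbing argument to reduce to the problem of finding an almost-perfect $H$-tiling, then invoke Koml\'os' theorem (Theorem~\ref{komcor}) together with the hypothesis to close the gap. The key point is that the degree sequence condition should be strong enough to supply both ingredients. First I would show that $G$ contains a small \emph{absorbing set} $M \subseteq V(G)$ with $|M|$ divisible by $h$ and $|M| = o(n)$, such that for every set $W \subseteq V(G) \setminus M$ with $|W| \leq \gamma n$ (for some $\gamma \ll \eta$) and $|W|$ divisible by $h$, the graph $G[M \cup W]$ has a perfect $H$-tiling. The construction of $M$ is by the usual probabilistic/greedy argument: one shows that every $h$-set of vertices has many ``absorbing gadgets'' (copies of a suitable bipartite-like structure on $O(1)$ vertices that can tile either with or without the given $h$-set absorbed), then picks a random subfamily and applies a deletion step. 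To guarantee enough absorbing gadgets, I would only need that most vertices of $G$ have degree close to $(1-1/\chi_{cr}(H))n$, which follows because the degree sequence condition forces $d_i \geq (1-1/\chi_{cr}(H))n + \eta n$ for all $i > \omega n / h$ (using $\omega + \sigma = \omega r - \chi_{cr}(H)(\omega - \sigma \cdot \text{something})$; more precisely one checks $(\omega+\sigma)/h - \sigma/\omega \cdot (\omega/h) \geq 1/\chi_{cr}(H)$-type arithmetic, and the $\chi_{cr}$ value is exactly what makes the thresholds match at $i = \omega n / h$).

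Second, having fixed $M$, I would apply Koml\'os' theorem (or rather the argument behind Theorem~\ref{kothm}, the K\"uhn--Osthus almost-tiling result when $r \geq 3$, and a direct argument when $r = 2$) to the graph $G' := G \setminus M$. Since $|M| = o(n)$, $G'$ still has essentially the same degree sequence, in particular $\delta(G') \geq (1 - 1/\chi_{cr}(H) + \eta/2)n$ for all but at most $\omega n/h$ vertices. The mild obstacle here is that Koml\'os' theorem is stated with a genuine minimum degree hypothesis, not a degree sequence one, so I would first need a ``degree sequence version of Koml\'os' theorem'': every graph $G'$ whose degree sequence satisfies $d_i \geq (1 - (\omega+\sigma)/h)n + (\sigma/\omega)i + \eta n/2$ for $i \leq \omega n/h$ contains an $H$-tiling covering all but at most $\gamma n/2$ vertices. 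I expect this to follow from Theorem~\ref{conj2} style reasoning, or more directly: the $\omega n/h$ low-degree vertices can be greedily covered first (each such vertex, together with its neighbourhood which is large enough, spans a copy of $H$), after which the remaining graph has large minimum degree and Koml\'os applies. Let $W$ be the uncovered vertices of $G'$; then $|W| \leq \gamma n/2 < \gamma n$ and $h \mid |W|$ (since $h \mid n$, $h \mid |M|$, and the tiling covers a multiple of $h$ vertices). Applying the absorbing property of $M$ to $W$ then yields a perfect $H$-tiling of $G[M \cup W]$, which together with the almost-perfect tiling of $G' \setminus W$ gives a perfect $H$-tiling of $G$.

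The hard part will be the absorbing lemma, and specifically verifying that the degree sequence hypothesis — which permits up to $\omega n/h$ vertices to have degree substantially below $(1-1/\chi_{cr}(H))n$ — still allows the construction of a linear-in-$n$ family of absorbing gadgets for \emph{every} $h$-set $S$. The subtlety is that $S$ itself might contain low-degree vertices, so the gadget-counting must be done carefully: one typically builds the gadget by extending $S$ through common neighbourhoods, and if several vertices of $S$ are low-degree their common neighbourhood could a priori be small. The resolution, as in \cite{Tregs}, is that the degree sequence condition $d_i \geq (r-2)n/r + (\sigma/\omega) i + \eta n$-flavoured bound (rewritten appropriately) ensures that even the low-degree vertices have degree at least $(1 - \omega/h - \sigma/h + \eta)n \geq (1-1/r+\eta)n$ when $i \geq 1$ is not too small, and the genuinely problematic vertices (the smallest $o(n)$ of them) are few enough to be handled separately or swallowed into $M$. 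I would also need to double-check the arithmetic identity $(1 - (\omega+\sigma)/h)n + (\sigma/\omega)(\omega n/h) = (1 - \sigma/h)n$ and relate $1 - \sigma/h$ to $1 - 1/\chi_{cr}(H)$ via $\chi_{cr}(H) = (r-1)h/(h-\sigma) = h/\omega$, confirming that the condition at the top of the range ($i = \omega n/h$) reads $d_i \geq (1 - 1/\chi^*(H) + \eta)n$, exactly matching the K\"uhn--Osthus minimum-degree threshold — this is the sanity check that the theorem is the ``right'' degree sequence generalisation and that the two stages dovetail.
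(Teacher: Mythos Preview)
Your strategy---build an absorbing set, then apply an almost-perfect tiling result---is genuinely different from the paper's. The paper does not use absorbing at all; it follows the K\"uhn--Osthus regularity route: apply the Regularity Lemma, show the reduced graph $R$ inherits the degree sequence (Lemma~\ref{inheriteddegseqreduced}), invoke the degree-sequence Koml\'os theorem (Theorem~\ref{generalalmostmain}, which is precisely the auxiliary result you say you would need) to find an almost-perfect bottle-graph tiling \emph{in $R$}, cover the exceptional vertices via the Key Lemma, repair divisibility of the blown-up tiles using the number-theoretic Theorem~\ref{partitionthm} (this is where $\mathrm{hcf}(H)=1$ is actually used), and finish with Lemma~\ref{generalkolem} plus the Blow-up Lemma.

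There is also a concrete gap in your sketch. The claim that ``even the low-degree vertices have degree at least $(1-1/r+\eta)n$'' is false: since $\omega+\sigma = (h+(r-2)\sigma)/(r-1)$ one always has $(\omega+\sigma)/h \geq 1/(r-1) > 1/r$, so the hypothesis only guarantees $d_1 \geq (1-(\omega+\sigma)/h+\eta)n$, which is strictly below $(1-1/r)n$. In the bipartite case this is extreme: $\omega+\sigma=h$, so the hypothesis permits $d_1$ as small as $\eta n + \sigma/\omega$. Moreover the number of vertices allowed to have degree below $(1-1/\chi_{cr}(H))n$ is $\omega n/h$, which is linear, not $o(n)$ as you suggest. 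For a vertex $v$ with $d_G(v)\approx \eta n$ the standard gadget-counting for absorbers does not obviously go through, and the ``swallow them into $M$'' fix cannot work because $M$ must absorb an \emph{arbitrary} leftover set, which may itself contain such vertices. An absorbing proof here would need, at minimum, a separate argument covering the very-low-degree vertices first and then a verification that the remaining graph supports a full absorbing structure; for $r=2$ this looks genuinely delicate. The paper sidesteps all of this by working in the reduced graph and handling the divisibility obstruction combinatorially via Theorem~\ref{partitionthm} rather than via absorption.
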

Observe that when $i = \omega n/h$, we have $$d_{\frac{\omega n}{h}} \geq \left(1 - \frac{\omega}{h} + \eta\right)n = \left(1 - \frac{1}{\chi_{cr}(H)} + \eta\right)n.$$ Hence, Theorem~\ref{mainthm} is a strengthening of Theorem~\ref{kothm}. Note that 
Theorem~\ref{mainthm} applies to all graphs $H$ with $\textnormal{hcf}(H) = 1$, not just graphs $H$ with $\chi(H) \geq 3$ and $\textnormal{hcf}_{\chi}(H) = 1$ (as in Theorem~\ref{kothm}).
Moreover, Theorem~\ref{mainthm} (and Theorem~\ref{conj2}) is best-possible for many graphs $H$ in the sense that we cannot replace the $\eta n$-term with a $o(\sqrt{n})$-term (see Proposition~\ref{square}). 
Theorem \ref{mainthm} is also best possible for all graphs $H$ in the sense that there are $n$-vertex graphs $G$ with only slightly 
 more than $\omega n/h$ vertices with degree (slightly) below $(1 - \omega/h + \eta)n$ that do not contain a perfect $H$-tiling (see Proposition~\ref{exex3prop}).
\COMMENT{AT:rewrote sentence as what was written wasn't correct} 
Thus, it is not possible to allow significantly more `small' degree vertices in Theorem~\ref{mainthm}.
Extremal examples are discussed in more detail in Section~\ref{sec:ex2}.

Combining Theorem~\ref{mainthm} with Theorem~\ref{conj2} we obtain the following degree sequence version of the K\"uhn--Osthus theorem (Theorem~\ref{kothm1}).

\begin{thm}\label{hydetregsdegseqkothm}
Let $\eta > 0$ and $H$ be a graph with $\chi(H) =: r \geq 2$. Let $\sigma:= \sigma (H)$,  $h := |H|$ and $\omega := \left(h - \sigma\right)/(r-1)$. Then there exists an integer $n_1 = n_1(\eta, H)$ such that if $G$ is a graph on $n \geq n_1$ vertices, $h$ divides $n$ and either \textnormal{(i)} or \textnormal{(ii)} below holds, then $G$ contains a perfect $H$-tiling.

\begin{itemize}
    \item[(i)] $\textnormal{hcf}(H) = 1$ and $G$ has degree sequence $d_1 \leq \ldots \leq d_n$ such that 

$$d_i \geq \left(1 - \frac{\omega+\sigma}{h}\right)n + \frac{\sigma}{\omega}i + \eta n\ \ \mbox{for all \ $1 \leq i \leq \frac{\omega n}{h}$.}$$
    \item[(ii)] $\textnormal{hcf}(H) \neq 1$ and $G$ has degree sequence $d_1 \leq \ldots \leq d_n$ such that

$$d_i \geq (r - 2)n/r + i + \eta n \ \ \mbox{for all} \ i < n/r.$$ 
\end{itemize}

\end{thm}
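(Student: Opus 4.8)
The plan is to prove Theorem~\ref{mainthm} via the by now standard two-step strategy for tiling problems: first cover almost all of $G$ by vertex-disjoint copies of $H$, then absorb the leftover vertices into a perfect tiling. Since every vertex of $G$ has degree at least $d_1 \geq (1 - \omega/h + \eta)n \cdot(\text{const}) \geq (1 - 1/\chi_{cr}(H))n + \Omega(n)$ once we note the smallest relevant index contributes the weakest bound, in fact the trailing $n - \omega n /h$ vertices all have degree at least $(1-\omega/h+\eta)n$. So the bulk of $G$ behaves as in the K\"uhn--Osthus regime; the novelty is that up to $\omega n/h$ vertices are allowed to have substantially smaller degree, with the permitted deficiency controlled linearly by the index~$i$.

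First I would set aside the low-degree vertices. Let $B$ be the set of vertices of degree less than $(1-\omega/h+\eta)n$; by the degree sequence hypothesis $|B| \leq \omega n/h$, and moreover for each $j$ the $j$-th smallest-degree vertex of $B$ has degree at least $(1-(\omega+\sigma)/h)n + (\sigma/\omega)j + \eta n$. The key structural point is that one can greedily extend each vertex $v \in B$ into a copy of $H$ using only high-degree vertices: since $v$ has many neighbours and the high-degree vertices induce a graph dense enough to contain $H$-copies through any prescribed vertex set of bounded size, we can find $|B|$ vertex-disjoint copies of $H$, one through each vertex of $B$. The linear term $(\sigma/\omega)i$ in the hypothesis is exactly what is needed to run this greedy embedding: when we have already used up roughly $j\cdot h$ vertices on the previous copies, the $(j{+}1)$-st low-degree vertex still has enough neighbours outside the used set, because its degree guarantee grows by $\sigma/\omega$ per unit of~$j$ while each copy consumes $\omega$ low-index "budget" and $h$ vertices total — the arithmetic is calibrated so that $v$ can play the role of a vertex in the smallest colour class of size $\sigma$ in an optimal colouring of $H$. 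After deleting these $|B|$ copies, the remaining graph $G'$ has all degrees at least $(1-\omega/h+\eta/2)n' = (1 - 1/\chi_{cr}(H) + \eta/2)n'$ on $n' = n - |B|h$ vertices (here one checks $h \mid n'$).

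At this point I would invoke Theorem~\ref{kothm} — but that theorem requires $\chi(H) \geq 3$ and $\textnormal{hcf}_\chi(H) = 1$, whereas Theorem~\ref{mainthm} allows $\chi(H) = 2$ (with $\textnormal{hcf}_c(H) = 1$ and $\textnormal{hcf}_\chi(H) \leq 2$). So for the bipartite case I would instead appeal directly to the relevant special case of the K\"uhn--Osthus machinery (Theorem~\ref{kothm1} gives a perfect $H$-tiling for graphs with min-degree $(1 - 1/\chi^*(H))n + C$, and when $\textnormal{hcf}(H)=1$ we have $\chi^*(H) = \chi_{cr}(H)$), which applies uniformly to all $H$ with $\textnormal{hcf}(H)=1$. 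Since $G'$ comfortably exceeds the required minimum degree for large $n$, we obtain a perfect $H$-tiling of $G'$; together with the $|B|$ copies covering $B$, this gives a perfect $H$-tiling of $G$.

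The main obstacle is the greedy embedding of the low-degree vertices, and in particular making the index-dependent bound do precisely the right work. One has to order the vertices of $B$ by degree, embed them one at a time, and at step $j$ argue that vertex $v_j$ — with its (relatively weak) degree guarantee — together with the high-degree vertices not yet used still spans a copy of $H$ with $v_j$ in a smallest colour class. This requires a careful accounting: after $j-1$ steps at most $(j-1)h$ vertices are unavailable, and one must verify $d(v_j) - (j-1)h$ still exceeds the threshold needed to find $H$ minus one vertex in the neighbourhood of $v_j$ within the high-degree set. The choice $\omega = (h-\sigma)/(r-1)$ and the coefficient $\sigma/\omega$ are exactly what balances this; checking that the induced subgraph on high-degree vertices is always dense enough (an application of, say, the Erd\H{o}s--Stone theorem or a direct supersaturation argument, noting that these vertices have min-degree close to $(1 - 1/\chi_{cr}(H))n > (1-1/r)n$) is routine but must be done with the constant $C(H)$ in mind so that the final invocation of the K\"uhn--Osthus theorem is legitimate. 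A minor additional point is confirming the divisibility $h \mid n'$, which is immediate since we delete exactly $|B|$ copies of $H$.
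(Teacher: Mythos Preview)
Your approach has a fatal counting error that makes the greedy covering step impossible in general. You propose to find $|B|$ vertex-disjoint copies of $H$, one through each vertex of $B$, using only high-degree vertices for the other $h-1$ positions. But this requires $|B|\cdot h$ vertices in total, and for every graph $H$ with $\textnormal{hcf}(H)=1$ one has $\omega>1$ (indeed, $\omega=1$ forces all colour classes to have size $1$, whence $H=K_r$ and $\textnormal{hcf}_\chi(H)=\infty$). Since the degree sequence hypothesis permits $|B|$ to be essentially $\omega n/h$, you may need up to $\omega n>n$ vertices for the covering --- more than $G$ has. Concretely, for $H=K_{1,1,2}$ we have $h=4$, $\sigma=1$, $\omega=3/2$, so $|B|$ can be close to $3n/8$ and the proposed $|B|$ copies would require roughly $3n/2$ vertices.

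Even if one tries to pack several low-degree vertices into a single copy of $H$, the subsequent step fails for a different reason: after removing $\Theta(n)$ vertices from $G$, the relative minimum degree of the leftover graph $G'$ need not stay above $(1-\omega/h)n'$. A short computation shows that $d_{G'}(w)\ge(1-\omega/h+\eta/2)|G'|$ for all surviving $w$ would force $\eta$ to be bounded below by a constant depending only on $H$ (of order $\omega^2/h$), whereas the theorem must hold for arbitrarily small $\eta>0$. So the reduction to a minimum-degree instance of the K\"uhn--Osthus theorem does not go through.

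The paper takes an entirely different route. For case~(ii) it simply quotes Theorem~\ref{conj2}. For case~(i) (which is Theorem~\ref{mainthm}) it applies the regularity lemma, uses the degree-sequence Koml\'os theorem (Theorem~\ref{generalalmostmain}) to obtain an almost-perfect bottle-graph tiling of the \emph{reduced} graph, covers the exceptional vertices one at a time (this is where the slope $\sigma/\omega$ in the hypothesis is actually used, but only for $o(n)$ vertices, not $\Theta(n)$), then invokes the number-theoretic Theorem~\ref{partitionthm} to fix divisibility before finishing with the Blow-up lemma. The key conceptual point you are missing is that the low-degree vertices cannot be handled by a naive greedy absorption; the regularity framework absorbs them into the structure of the reduced graph instead.
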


One can in fact obtain the following generalisation of Theorem~\ref{mainthm}.
\begin{thm}\label{mainthmsigma}
Let $\eta > 0$ and $H$ be a graph with $\textnormal{hcf}(H) = 1$ and $\chi(H) =: r \geq 2$. Let $h := |H|$. Set $\sigma \in \mathbb{R}$ such that $\sigma(H) \leq \sigma < h/r$ and $\omega := \left(h - \sigma\right)/(r-1)$.
Then there exists an integer $n_1 = n_1(\eta, H)$ such that the following holds. Let $G$ be a graph on $n \geq n_1$ vertices such that $h$ divides $n$ and $G$ has degree sequence $d_1 \leq \ldots \leq d_n$ such that 

$$d_i \geq \left(1 - \frac{\omega+\sigma}{h}\right)n + \frac{\sigma}{\omega}i + \eta n\ \ \mbox{for all \ $1 \leq i \leq \frac{\omega n}{h}$.}$$
Then $G$ contains a perfect $H$-tiling.\COMMENT{Deleted the footnote here as it wasn't quite correct and would be a bit confusing for the reader.}
\end{thm}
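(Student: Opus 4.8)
The plan is to use the absorbing method, with Szemer\'edi's Regularity Lemma doing the bulk of the work. Write $\beta := \omega/h$ (so $\beta = 1/\chi_{cr}(H)$ when $\sigma = \sigma(H)$), and let $B$ be the set of the $\lceil\beta n\rceil$ vertices of smallest degree; the hypothesis then says that the degrees of vertices in $B$ ramp up from roughly $(1-\tfrac{\omega+\sigma}{h}+\eta)n$ to $(1-\beta+\eta)n$, while every vertex outside $B$ has degree at least $(1-\beta+\eta)n$. First I would fix constants $\epsilon\ll d\ll\zeta\ll\eta$ and establish an \emph{absorbing lemma}: there is a set $A\subseteq V(G)$ with $h\mid|A|$ and $|A|\le\epsilon n$ such that $G[A]$ has a perfect $H$-tiling and, for every $W\subseteq V(G)\setminus(A\cup B)$ with $h\mid|W|$ and $|W|\le\zeta n$, the graph $G[A\cup W]$ has a perfect $H$-tiling. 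The construction is by now routine once one checks that every vertex outside $B$ lies in many bounded-size ``absorbing gadgets''; this holds since such vertices have degree at least $(1-\beta+\eta)n\ge(1-1/\chi_{cr}(H)+\eta)n$, and this step — together with the completion step below — is where $\textnormal{hcf}(H)=1$ is essential, guaranteeing that a suitable union of gadgets can absorb a leftover set of \emph{any} size divisible by $h$ rather than only of sizes in a fixed residue class.

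Next I would find an almost-perfect $H$-tiling of $G':=G-A$ that covers all of $B\setminus A$. Since $|A|=o(n)$, $G'$ still satisfies the degree-sequence hypothesis with $\eta$ replaced by $\eta/2$. Apply the Regularity Lemma to $G'$, after first moving a negligible set of vertices so that each cluster lies entirely inside or entirely outside $B$ (possible as $|B|=(\beta+o(1))n$). A short averaging argument then shows that the reduced graph $R$ on $k$ clusters has roughly $\beta k$ ``$B$-clusters'', sitting at the low-degree end, and inherits a degree-sequence condition of the same shape: ordering the clusters $U_1,\dots,U_k$ by degree, $d_R(U_i)\ge(1-\tfrac{\omega+\sigma}{h}+\tfrac{\sigma}{\omega}\cdot\tfrac ik-o(1))k$ for $i\le(\beta+o(1))k$. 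The heart of the argument is then a structural lemma in $R$: this condition forces a weighted family of copies of $K_r$ in $R$ — to be realised as copies of $H$ by splitting each cluster according to the colour classes of an optimal colouring of $H$ — that assigns full weight to every $B$-cluster and covers all but $o(k)$ of the remaining weight. This is a degree-sequence strengthening of Koml\'os' lemma on fractional tilings, and I would prove it by a linear-programming/extremal argument: the intercept $1-\tfrac{\omega+\sigma}{h}$ and slope $\tfrac{\sigma}{\omega}$ in the hypothesis are precisely calibrated against the ``space-barrier'' configuration in which the $\approx\beta k$ low-degree clusters form an independent set of $R$. Blowing this structure up inside $G'$ — using regularity together with the elementary fact that a fixed graph $H$ embeds greedily into any sufficiently dense super-regular $r$-tuple — yields an $H$-tiling $\mathcal{M}_1$ of $G'$ covering all of $B\setminus A$ and all but at most $\zeta n$ further vertices; the uncovered set $W:=V(G')\setminus V(\mathcal{M}_1)$ is then disjoint from $A\cup B$ and satisfies $h\mid|W|$.

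Finally, the absorbing lemma applied to $W$ gives a perfect $H$-tiling $\mathcal{M}_2$ of $G[A\cup W]$, and $\mathcal{M}_1\cup\mathcal{M}_2$ is a perfect $H$-tiling of $G$. At the points where perfect (rather than merely almost-perfect) tilings are needed, Koml\'os' theorem (Theorem~\ref{komcor}) and the K\"uhn--Osthus theorem (Theorem~\ref{kothm}) can be invoked directly. For the full generality of Theorem~\ref{mainthmsigma} nothing in the above changes: the constraint $\sigma<h/r$ is used only to ensure $\omega>h/r$, so that the governing threshold $1-\beta$ lies below the Hajnal--Szemer\'edi threshold $1-1/r$ and we are genuinely in the Koml\'os regime, while $\sigma\ge\sigma(H)$ guarantees that $H$ has an optimal colouring whose smallest colour class has size at most $\sigma$, which is exactly what the cluster-splitting step requires. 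I expect the main obstacle to be the reduced-graph structural lemma, along with the verification that $R$ genuinely inherits the sharp degree-sequence condition; the rest is standard absorbing-method bookkeeping or a direct appeal to the tiling theorems quoted above.
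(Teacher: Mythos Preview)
Your approach is genuinely different from the paper's, which does not use absorbing. The paper applies the Regularity Lemma, invokes the degree-sequence Koml\'os theorem (Theorem~\ref{generalalmostmain}, from the authors' earlier work) to obtain an almost-perfect bottle-graph tiling $\hat{\mathcal{B}}$ of the reduced graph $R$, covers the small exceptional set $V_0$ greedily (the minimum-degree bound $d_1\ge(1-\tfrac{\omega+\sigma}{h}+\eta)n$ already lets \emph{every} vertex, low-degree or not, be placed in a copy of $H$), and then --- this is the main new idea --- removes a bounded number of further copies of $H$ chosen via an elementary number-theoretic result (Theorem~\ref{partitionthm}) exploiting $\mathrm{hcf}(H)=1$, so that each $|V_G(\hat{B}_i)|$ becomes divisible by $h$; Lemma~\ref{generalkolem} together with the Blow-up Lemma then finishes.

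Your outline has a real gap at the assertion that $\mathcal{M}_1$ covers all of $B\setminus A$. First, the reduced-graph lemma you need --- a fractional $K_r$-tiling of $R$ assigning \emph{full} weight to every $B$-cluster --- is strictly stronger than Theorem~\ref{generalalmostmain} (which gives only an almost-perfect tiling with no control over which clusters are missed) and would itself be a new theorem, not an LP triviality. Second, even granting it, converting full fractional weight on $B$-clusters into an integral $H$-tiling of $G'$ that covers every single vertex of $B\setminus A$ is precisely where the divisibility obstructions live: the regularity exceptional set $V_0$ may contain $B$-vertices, and within each blown-up $K_r$ the cluster pieces must have sizes compatible with the colour-class arithmetic of $H$ before the Blow-up Lemma applies --- ``weight $1$'' alone does not enforce this. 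Since your absorber explicitly refuses $B$-vertices, any leftover in $B$ is fatal. The natural repairs (cover $V_0$ greedily, then adjust piece sizes mod $h$) lead straight back to the divisibility bookkeeping the paper handles via Theorem~\ref{partitionthm}, so the absorber is not buying you the shortcut you intend. (Minor: your constant hierarchy is inverted --- an absorber of size $\le\epsilon n$ cannot absorb leftovers of size up to $\zeta n$ when $\epsilon\ll\zeta$.)
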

In Section~\ref{sectsetup}, we will prove Theorem \ref{mainthmsigma} directly. The proof of Theorem~\ref{mainthmsigma} follows that of Theorem~\ref{kothm} in~\cite{kuhn} closely. The main novelty of our proof is how we avoid divisibility barriers. For this we make use of an elementary number theoretic result for graphs with $\text{hcf}(H)=1$ (see Theorem~\ref{partitionthm}).
We also make  use of a recent degree sequence strengthening\COMMENT{AT: I think strengthening is fine as it does imply the original Komlos} of Koml\'os' theorem proved by the authors and Liu~\cite{hlt}. 
\COMMENT{AT: rewrote this paragraph}

Since the choice of $\sigma \in [\sigma (H), h/r)$ is arbitrary, note that Theorem~\ref{mainthmsigma} provides an infinite collection of degree sequences that force a perfect $H$-tiling. 
Having a higher value of $\sigma$ lowers the starting point of the degree sequence condition, but at the price of a steeper `slope' and higher value of $d_{\omega n/h}$ (see Figure~1).
\COMMENT{AT: added sentence}
As with Theorem~\ref{mainthm},
for many graphs $H$, each of these degree sequences is best-possible in the sense that we cannot replace the $\eta n$-term with a $o(\sqrt{n})$-term (see Section~\ref{sec:ex2}).
Note too that we cannot extend Theorem~\ref{mainthmsigma} to the case when $\sigma < \sigma (H)$. Indeed, in this case, if we set $\eta \ll 1$ then the degree sequence condition in Theorem~\ref{mainthmsigma} would allow
all vertices in $G$ to have degree below $(1-1/\chi_{cr} (H))n-1$; however, we know from Theorem~\ref{kothm1} that there are graphs $G$ that satisfy this condition and that do not contain perfect $H$-tilings.
\COMMENT{AT: added to explain why can't lower bound on $\sigma$.}

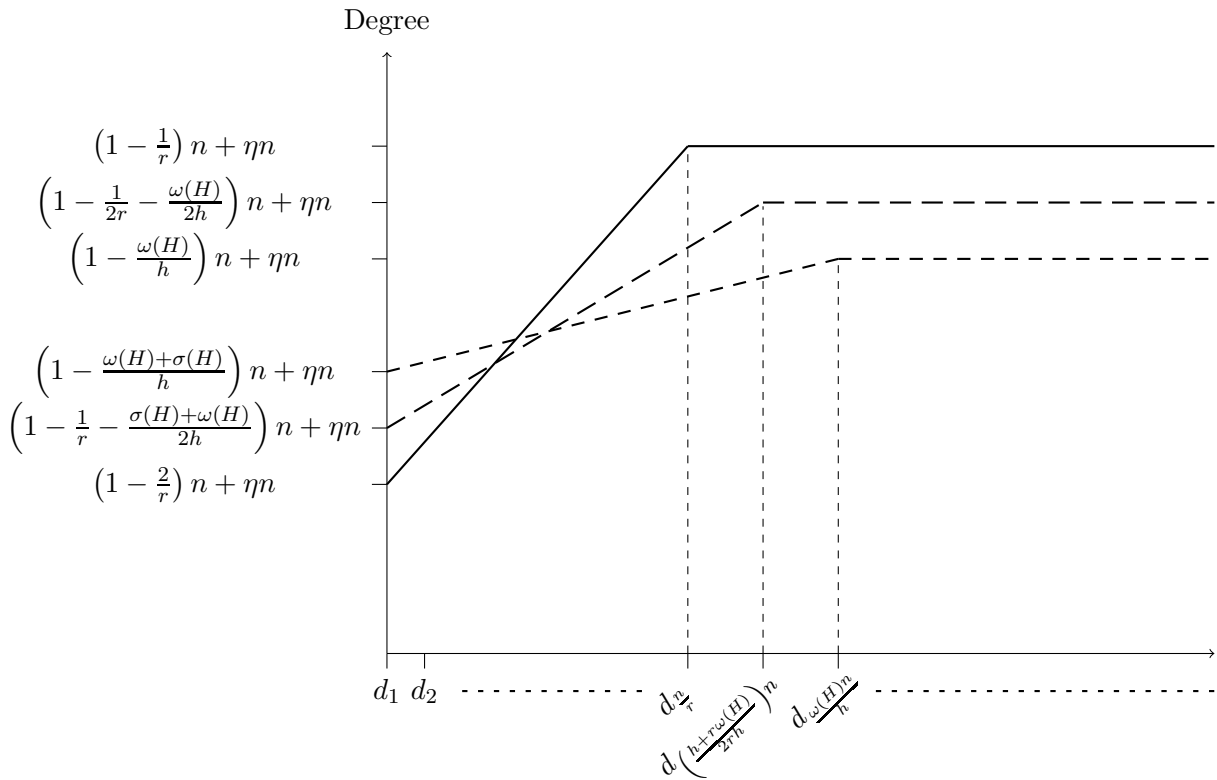
\begin{figure}
\begin{center}
\begin{tikzpicture}

\draw[->] (-4,0) -- (-4,8);
\draw[->] (-4,0) -- (7,0);
\draw (0,0) -- (0,-0.2);
\node at (-4,-0.5) {$d_1$};
\draw (-4,0) -- (-4,-0.2);
\draw (-3.5,0) -- (-3.5,-0.2);
\node at (-3.5,-0.5) {$d_2$};
\draw[dash pattern=on 2pt off 4pt, thick] (-3,-0.5) -- (-0.5,-0.5);
\draw[dash pattern=on 2pt off 4pt, thick] (2.5,-0.5) -- (7,-0.5);
\draw[black,thick] (-4,2.25) -- (0,6.75);
\draw[black,thick] (0,6.75) -- (7,6.75);
\draw[dash pattern=on 8pt off 4pt, thick] (-4,3) --  (1,6);
\draw[dash pattern=on 8pt off 4pt, thick] (1,6) -- (7,6);
\draw[dash pattern=on 5pt off 4pt, thick] (-4,3.75) --  (2,5.25);
\draw[dash pattern=on 5pt off 4pt, thick] (2,5.25) -- (7,5.25);
\node at (-4,8.4) {Degree};
\draw[dashed] (0,0) -- (0,6.70);
\draw[dashed] (1,0) -- (1,5.95);
\draw[dashed] (2,0) -- (2,5.20);
\draw (-4,2.25) -- (-4.2,2.25);
\draw (-4,3) -- (-4.2,3);
\draw (-4,3.75) -- (-4.2,3.75);
\draw (-4,5.25) -- (-4.2,5.25);
\draw (-4,6) -- (-4.2,6);
\draw (-4,6.75) -- (-4.2,6.75);
\node at (-6.7,2.25) {$\left(1-\frac{2}{r}\right)n + \eta n$};
\node at (-6.7,3) {$\left(1-\frac{1}{r} - \frac{\sigma(H) + \omega(H)}{2h}\right)n + \eta n$};
\node at (-6.7,3.75) {$\left(1 - \frac{\omega(H) + \sigma(H)}{h}\right)n + \eta n$};
\node at (-6.7,5.25) {$\left(1 - \frac{\omega(H)}{h}\right)n + \eta n$};
\node at (-6.7,6) {$\left(1-\frac{1}{2r} - \frac{\omega(H)}{2h}\right)n + \eta n$};
\node at (-6.7,6.75) {$\left(1-\frac{1}{r}\right)n + \eta n$};
\draw (1,0) -- (1,-0.2);
\draw (2,0) -- (2,-0.2);
\node at (-0.15,-0.65) {\rotatebox{45}{$d_{\frac{n}{r}}$}};
\node at (0.45,-1) {\rotatebox{45}{$d_{\left( \frac{h+r\omega(H)}{2rh}\right)n}$}};
\node at (1.85,-0.65) {\rotatebox{45}{$d_{\frac{\omega(H) n}{h}}$}};
\end{tikzpicture}

\caption{The degree sequence in Theorem \ref{mainthmsigma} for a fixed graph $H$ given $\sigma = \sigma(H)$ (medium dashed); $\sigma = \frac{h+r\sigma(H)}{2r}$ (long dashed); $\sigma = \frac{h}{r}$ (full). }

\end{center}
\end{figure}

\COMMENT{AT degree sequence in this theorem also best-possible in sense can't lower or increase range of $\sigma$? JH: We can't lower or increase the range. If we take $\sigma < \sigma(H)$ then we could use an extremal example akin Extremal Example 3 to show that we can't ensure a perfect $H$-tiling (I think). Also, $\sigma > h/r$ is nonsensical since then $\omega < \sigma$ and you can just switch the definitions of $\sigma$ and $\omega$ around.
AT: Joseph, your comment about the case when $\sigma > h/r$ isn't correct. The roles of $\sigma $ and $\omega$, are NOT interchangeable. PLEASE think more carefully about this question.}
\COMMENT{JH: Colour on Figure 1? AT no I
think it is ok to leave as it is}

\medskip

The paper is organised as follows. In the next section we discuss various senses of optimality for degree sequence conditions before giving several extremal examples for Theorems \ref{mainthm} and \ref{mainthmsigma}. We also ask whether one can improve Theorem \ref{mainthm} by suitably `capping' the bounds on the degrees of the vertices (see Question \ref{ques1}). In Section~\ref{sectnotate} we introduce some notation and definitions; in Section~\ref{sectaux} we give a number of auxiliary results and definitions
relating to the regularity lemma and tilings. We then prove an elementary number theoretic result (Theorem \ref{partitionthm}) in Section~\ref{sectstandalone} which will be a crucial tool in overcoming divisibility barriers during the proof of Theorem~\ref{mainthmsigma}. In Section~\ref{sectoverview} we give an overview of the proof of Theorem~\ref{mainthmsigma}
before proving it in 
 Section~\ref{sectsetup}.

\section{A discussion on the optimality of degree sequence conditions}\label{sec:ex}
In this section we describe various notions concerning when a degree sequence condition is `best-possible' in some sense. In particular, we will explain in what way our results (Theorems~\ref{mainthm} and~\ref{mainthmsigma}) are essentially best-possible, as well as how we may be able to strengthen these theorems further. Some of our discussion draws on the survey~\cite{survey}.

First we introduce a few definitions. An integer sequence $\pi =(d_1\leq \dots \leq d_n)$ is called \emph{graphical} if there exists a (simple) graph $G$ that has $\pi$ as its degree sequence.
Given a graph property $P$, we say that a graphic sequence $\pi$ \emph{forces} $P$ if every graph with degree sequence $\pi$ satisfies property $P$. Given a property $P$ (such as containing a Hamilton cycle or perfect $H$-tiling), the `holy-grail' in the study of degree sequences is to establish all
those graphic sequences $\pi$ that force $P$.

The following theorem of Chv\'atal~\cite{chv} provides an extremely general condition on degree sequences that force a Hamilton cycle.
\begin{thm}[Chv\'atal~\cite{chv}]\label{chthm}
Suppose that the degree sequence of a graph $G$
is $d_1\le \dots \le d_n$. If $n \geq 3$ and $d_i \geq i+1$ or $d_{n-i} \geq n-i$
for all $i <n/2$ then $G$ is Hamiltonian. 
\end{thm}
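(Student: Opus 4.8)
The plan is the classical argument via an edge-maximal counterexample. Suppose the statement fails; among all non-Hamiltonian graphs on $n$ vertices whose (sorted) degree sequence satisfies the hypothesis, let $G$ be one with the largest number of edges. The first point to record is that the hypothesis is monotone under edge addition: inserting an edge can only increase the sorted degree sequence term by term, and the disjunction ``$d_i \geq i+1$ or $d_{n-i} \geq n-i$'' is preserved under such increases. Since $K_n$ is Hamiltonian for $n \geq 3$, the graph $G$ is not complete; and since $G + uv$ has more edges than $G$ and still satisfies the hypothesis, maximality of $G$ forces $G + uv$ to be Hamiltonian for every non-edge $uv$ of $G$.

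Next I would fix a non-adjacent pair $u, v$ with $d(u) + d(v)$ as large as possible, say with $d(u) \leq d(v)$, and set $h := d(u)$. As $G + uv$ is Hamiltonian but $G$ is not, $G$ contains a Hamilton path $u = v_1, v_2, \ldots, v_n = v$ (in particular $h \geq 1$). Define $S := \{ i \in \{1,\ldots,n-1\} : u v_{i+1} \in E(G)\}$ and $T := \{ i \in \{1,\ldots,n-1\} : v_i v \in E(G)\}$, so that $|S| = d(u) = h$ and $|T| = d(v)$. The crucial observation is that $S \cap T = \emptyset$: if some $i$ belonged to both, then $v_1 v_2 \cdots v_i v_n v_{n-1} \cdots v_{i+1} v_1$ would be a Hamilton cycle of $G$. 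Consequently $h + d(v) = |S| + |T| \leq n-1$; in particular $h \leq (n-1)/2 < n/2$, so $i = h$ is a legitimate index for the hypothesis.

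It then remains to show that \emph{both} disjuncts of the hypothesis fail at $i = h$, which contradicts the choice of $G$. For the first disjunct: for each $i \in S$ the vertex $v_i$ is not adjacent to $v$ (since $i \notin T$), so by the extremal choice of $\{u,v\}$ we have $d(v_i) \leq d(u) = h$; this exhibits $h$ distinct vertices of degree at most $h$, whence $d_h \leq h$ and ``$d_h \geq h+1$'' fails. For the second disjunct: for each $i \in \{1,\ldots,n-1\} \setminus S$ the vertex $v_{i+1}$ is not adjacent to $u$, so again by extremality $d(v_{i+1}) \leq d(v)$; together with $u$ itself (of degree $h \leq d(v)$) this gives $n-h$ distinct vertices of degree at most $d(v) \leq n-1-h < n-h$, so $d_{n-h} \leq n-h-1$ and ``$d_{n-h} \geq n-h$'' fails as well.

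I expect the only real care to be needed in the last paragraph, namely the bookkeeping: one must check that the vertex sets produced genuinely have the claimed cardinalities --- that the indices stay inside $\{1,\ldots,n-1\}$ (here one uses that $u$ is not adjacent to $v$, which keeps $n-1$ out of $S$), that $u$ is not accidentally double-counted among the vertices $v_{i+1}$, and that the strict inequality $d(v) < n-h$ coming from $S \cap T = \emptyset$ is exactly what makes the second disjunct fail rather than merely hold with equality. Everything else is the splicing argument on the Hamilton path combined with the extremal choice of the non-adjacent pair.
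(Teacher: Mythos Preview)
The paper does not give a proof of Theorem~\ref{chthm}; it is quoted as a background result of Chv\'atal with a citation to~\cite{chv}, so there is nothing to compare against on the paper's side. Your argument is the standard edge-maximal-counterexample proof (essentially Chv\'atal's original), and it is correct as written: the monotonicity of the hypothesis under edge addition, the splicing of the Hamilton path at an index in $S\cap T$, and the two counts producing $d_h\le h$ and $d_{n-h}\le n-1-h$ are all sound, and your closing remarks correctly flag the only places where care is needed (that $n-1\notin S$ because $uv\notin E(G)$, and that $u$ is not among the vertices $v_{i+1}$).
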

Note that 
Chv\'atal's theorem is best-possible in the following sense:  if $d_1 \leq \dots \leq d_n$
is a  degree sequence that does not satisfy the condition in Theorem~\ref{chthm} then there
exists a non-Hamiltonian graph $G$ whose degree sequence $d'_1 \leq  \dots \leq d'_n$ is such
that $d'_i \geq d_i$ for all $1\leq i \leq n$. (We will informally refer to a degree sequence result being best-possible in this way as a \emph{Chv\'atal-type} result.)

Crucially note though that Chv\'atal's theorem does not describe all those graphic sequences  that force a Hamilton cycle. For example,  graphs with degree sequence $(2,2,2,2,2)$ must be Hamiltonian (in fact, are themselves simply a $5$-cycle), but do not satisfy
Chv\'atal's condition. More generally,   all $2k$-regular graphs on $4k+1$ vertices are Hamiltonian~\cite{nw} yet their degree sequences fail the condition in Theorem~\ref{chthm}.

\subsection{Degree sequence conditions forcing perfect $H$-tilings}
At present, for a given fixed graph $H$, it seems out of reach to characterise those graphical degree sequences that force a perfect $H$-tiling, or obtain a Chv\'atal-type result in this setting.
Thus, it is natural to seek degree sequence conditions that force a perfect $H$-tiling, and are best-possible in some weaker sense. For example, consider the following conjecture:
\begin{conj}[Balogh, Kostochka and Treglown~\cite{bkt}] \label{bktconj}
Let $n$, $r \in \mathbb{N}$ such that $r$ divides $n$. Suppose that $G$ is a graph on $n$ vertices with degree sequence $d_1 \leq \ldots \leq d_n$ such that:
\begin{itemize}
     \item[($\alpha$)] $d_i \geq (r - 2)n/r + i$ {for all} $ i < n/r$; 
      \item[($\beta$)] $  d_{n/r+1} \geq (r - 1)n/r$.
\end{itemize} Then $G$ contains a perfect $K_r$-tiling.
\end{conj}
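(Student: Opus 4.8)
\medskip
\noindent\emph{A proposed approach to Conjecture~\ref{bktconj}.}
The plan is to prove the conjecture by combining the absorbing method with a split into an \emph{extremal} and a \emph{non-extremal} case, in the spirit of exact results for perfect tilings. Throughout, write $W := \{v \in V(G) : d_G(v) < (1-1/r)n\}$ for the set of \emph{poor} vertices, and list $W = \{w_1, \dots, w_t\}$ so that $d_G(w_1) \le \dots \le d_G(w_t)$. Condition~($\alpha$) forces $t \le n/r$ and $d_G(w_i) \ge (r-2)n/r + i$, while condition~($\beta$) guarantees that all but at most $n/r$ vertices of $G$ have degree at least $(r-1)n/r$. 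Note that the \emph{asymptotic} form of the conjecture is already available: it is Theorem~\ref{conj2} specialised to $H = K_r$ (with $\sigma(K_r)=1$, $r = \chi(K_r)$). The new content is thus to remove the $\eta n$ error term, the price being the additional hypothesis~($\beta$), which is exactly the condition that fails for the Hajnal--Szemer\'edi extremal graph (an independent set of size $n/r+1$ completely joined to a clique on the remaining vertices).

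The first step is to dispose of the poor vertices greedily. Process $w_1, \dots, w_t$ in order of increasing degree; at step $i$, having already deleted $r(i-1)$ vertices, one uses $d_G(w_i) \ge (r-2)n/r + i$ to show that the neighbourhood of $w_i$ within the current graph still induces a subgraph whose minimum degree comfortably exceeds the Tur\'an threshold for $K_{r-1}$, hence contains a copy of $K_{r-1}$; together with $w_i$ this yields the $i$-th copy of $K_r$. The arithmetic is exactly balanced so that the $+i$ on the right-hand side of~($\alpha$) absorbs the loss of $r(i-1)$ previously deleted vertices. After this step we are left with a graph $G_0$ on $n - rt \equiv 0 \pmod r$ vertices in which every vertex has degree at least $(1-1/r)|G_0|$ up to a bounded additive error. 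In parallel, before running the greedy step, one should reserve a small \emph{absorbing structure}: a family $\mathcal A$ of $o(n)$ vertex-disjoint copies of $K_r$ such that for every sufficiently small $L \subseteq V(G)\setminus V(\mathcal A)$ with $r \mid |L|$, the graph on $V(\mathcal A)\cup L$ has a perfect $K_r$-tiling. Such $\mathcal A$ is built by a standard probabilistic argument once one knows that every $r$-set of vertices lies in linearly many absorbing configurations, which follows from the degree condition on $G_0$. One then covers $W$ greedily inside $G\setminus V(\mathcal A)$, takes an almost-perfect $K_r$-tiling of the high-minimum-degree remainder (here the Hajnal--Szemer\'edi theorem, or a Koml\'os-type almost-tiling bound, applies), and finally absorbs the bounded leftover using $\mathcal A$.

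To make this yield an \emph{exact} result rather than just the asymptotic one, the argument must be run through a stability dichotomy. In the non-extremal case, when $G$ is not within $\varepsilon n^2$ edges of the Hajnal--Szemer\'edi extremal configuration, one shows directly (via a robust version of the Hajnal--Szemer\'edi argument, or via the regularity method as in the proof of Theorem~\ref{conj2}) that the slack in the degree sequence is enough to complete the tiling without the bounded error term. In the extremal case, when $G$ \emph{is} close to that configuration, one must invoke~($\beta$) in an essential way: it is precisely~($\beta$) that bounds by $n/r$ the number of vertices that are ``far from universal'', and hence controls the independence-number obstruction that kills the extremal graph. I expect the extremal case to be the main obstacle: it requires a careful counting argument pinning down the structure near the independent-like part of $G$, together with an explicit rebalancing of the tiling there. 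The other delicate technical point is ensuring that the greedy treatment of $W$, the reserved absorbers, and the near-extremal structure one is trying to exploit all coexist --- in particular, that the greedy step does not exhaust the flexibility of $\mathcal A$ nor destroy the extremal structure; one typically handles this by choosing $\mathcal A$ disjoint from $W$ and from the ``universal-like'' part of $G$, and by deferring the stability analysis to $G_0$ rather than $G$.
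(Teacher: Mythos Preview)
The statement you are addressing is a \emph{conjecture} that the paper merely quotes as an open problem of Balogh, Kostochka and Treglown; the paper does not attempt to prove it. There is therefore no ``paper's own proof'' to compare your proposal against. The paper's only commentary on Conjecture~\ref{bktconj} is that it would be best-possible in the P\'osa sense, and that even if true it is likely still far from a Chv\'atal-type characterisation.

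As for the sketch itself, the absorbing-plus-stability strategy is a natural plan, but your greedy step contains a genuine arithmetic error. You assert that ``the arithmetic is exactly balanced so that the $+i$ on the right-hand side of~($\alpha$) absorbs the loss of $r(i-1)$ previously deleted vertices.'' It does not: after removing $r(i-1)$ vertices, the degree of $w_i$ in the current graph is only guaranteed to be at least $(r-2)n/r + i - r(i-1)$, which \emph{decreases} with $i$. Already for $r=2$ (perfect matchings) this bound becomes $2-i$, so the greedy argument as written fails at the second step. The $+i$ term compensates for at most $i$ lost neighbours, not $r(i-1)$; this is exactly why the asymptotic result (Theorem~\ref{conj2}) needs the $\eta n$ slack and why the exact statement remains open. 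Any viable attack must process the poor vertices far more carefully --- for instance, by matching each $w_i$ to a $K_{r-1}$ chosen so that later poor vertices lose as few neighbours as possible, or by handling all of $W$ simultaneously via a Hall-type argument rather than one at a time. Until that is resolved, the rest of the plan (absorbers, stability split) cannot get off the ground.
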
 
Conjecture~\ref{bktconj} is best-possible in the sense that there are examples (see  \cite[Section 4]{bkt}) showing that one cannot replace ($\alpha$) with $d_i \geq (r - 2)n/r + i - 1$ for even a \emph{single} $i$ or ($\beta$) with
 $d_{n/r+1} \geq (r - 1)n/r - 1$ and $d_{n/r+2} \geq (r - 1)n/r $. That is, there is no room to lower the degree sequence condition further, not even by lowering a single entry by just one. 
(We will informally refer to a degree sequence result being best-possible in this way as a \emph{P\'osa-type} result.)
However, Conjecture~\ref{bktconj}, if true, is likely still  significantly weaker than a Chv\'atal-type result. For example, it is easy to see that any graph $G$ with degree sequence $d_1\leq \dots \leq d_n$ satisfying (i) $d_1\geq r-1$;
(ii) $d_2 \geq n-2$; (iii) $d_{n-r+2}\geq n-1$ contains a perfect $K_r$-tiling even though the condition in Conjecture~\ref{bktconj} is violated.

One could also ask for a P\'osa-type strengthening of the K\"uhn--Osthus theorem (Theorem~\ref{kothm1}) for all graphs $H$.
However, obtaining such a result again seems extremely difficult, not only because (the special case) Conjecture~\ref{bktconj} is still open, but also in general because the `correct' value of the constant $C(H)$ in Theorem~\ref{kothm1} is not known.

\subsection{Extremal examples for Theorems~\ref{mainthm} and~\ref{mainthmsigma}}\label{sec:ex2}
Despite the aforementioned challenges, in this paper we have provided degree sequence conditions that force a perfect $H$-tiling, and are best-possible in various ways. The following 3 extremal examples demonstrate this. The first shows that we cannot significantly lower \emph{every} term in the degree sequence conditions of Theorems~\ref{mainthm} and~\ref{mainthmsigma} and still ensure a perfect $H$-tiling
for complete $r$-partite graphs $H$.
 The second shows that that the `slope' of the degree sequence in Theorem~\ref{mainthm} is best possible for so-called bottle graphs. The third demonstrates that for any graph $H$, to ensure a perfect $H$-tiling (or even an `almost' perfect $H$-tiling) in a graph $G$ on $n$ vertices we cannot have significantly more than $\omega n/h$ vertices that have degree below $\left(1 - \frac{1}{\chi_{cr}(H)} + \eta\right)n$.

\smallskip

\noindent
\emph{\bf{Extremal Example 1.}}
\textnormal{The following construction (a simple adaption of~\cite[Proposition 3.1]{Tregs}) demonstrates that for most complete $r$-partite graphs $H$, one cannot replace the $\eta n$-term in Theorems~\ref{mainthm} and~\ref{mainthmsigma}
with a $o(\sqrt{n})$-term.}
\begin{prop}\label{square} 
Let $r \geq 3$ and $H:=K_{t_1, \dots , t_r}$ with $t_i \geq 2$ (for all $1 \leq i \leq r$). Let $h := |H|$. Set $\sigma \in \mathbb{R}$ such that $\sigma(H) \leq \sigma < h/r$ and $\omega := \left(h - \sigma\right)/(r-1)$.
Let $n \in \mathbb N$ be sufficiently large so that $\sqrt{n}$ is an integer that is divisible by $6h^2$. Set $C:=\sqrt{n}/3h^2$.
Then there exists a graph $G$ on $n$ vertices whose degree sequence $d_1 \leq \dots \leq d_n$ satisfies
$$d_i \geq \left(1 - \frac{\omega+\sigma}{h}\right)n + \frac{\sigma}{\omega}i + C\ \ \mbox{for all \ $1 \leq i \leq \frac{\omega n}{h}$}$$
but such that $G$ does not contain a perfect $H$-tiling.
\end{prop}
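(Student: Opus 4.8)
The plan is to mimic the divisibility‑barrier construction from \cite[Proposition 3.1]{Tregs}, adapted so that the degree sequence hugs the bound in Theorem~\ref{mainthmsigma} but is short of it only by the additive term $C$. First I would recall why a perfect $K_{t_1,\dots,t_r}$-tiling forces a global divisibility constraint: in any partition of $V(G)$ into copies of $H$, summing the colour‑class sizes across all copies partitions $V(G)$ into $r$ sets $V_1,\dots,V_r$ whose sizes are each congruent (mod the relevant gcd) to fixed residues determined by the multiset $\{t_1,\dots,t_r\}$. Concretely, one aims to build $G$ as (essentially) a blow‑up of a complete multipartite-type pattern in which one tiny part $X$ of size about $C$ sits on the `wrong side' of this count, so that $X$ cannot be absorbed into any copy of $H$ without violating a parity/divisibility condition — exactly the mechanism behind the extremal examples for Theorem~\ref{kothm1}. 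The vertices of $X$ will be the low‑degree vertices $d_1,\dots,d_{|X|}$; giving them degree just slightly below the threshold (precisely, falling short by $C$ rather than $\eta n$) is what defeats any $o(\sqrt n)$ improvement.

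The key steps, in order, are: (1) fix the target partition structure — choose sizes $n_1,\dots,n_r$ for the main parts that are as balanced as the $\chi_{cr}(H)$-type extremal construction demands, i.e. each part of relative size roughly $1/\chi_{cr}(H)$ scaled appropriately by the $\sigma,\omega$ weighting, and then perturb exactly one part by $\pm C$ so the $H$-divisibility count fails; (2) define the edge set so that $G$ is complete between the prescribed parts and (to push minimum degrees up while keeping the obstruction) possibly add a sparse or dense bipartite piece inside the `small' side, using the $6h^2 \mid \sqrt n$ and $C=\sqrt n/3h^2$ choices to make all the arithmetic (part sizes, degrees) come out as integers; (3) compute the degree sequence and check the claimed inequality $d_i \geq (1-(\omega+\sigma)/h)n + (\sigma/\omega)i + C$ for $1 \le i \le \omega n/h$ — the low‑indexed vertices are those in $X$ and they just clear the bound by a hair, while the high‑indexed vertices comfortably exceed it; (4) verify no perfect $H$-tiling exists, by the divisibility/counting argument: any copy of $H$ meeting $X$ would force the part sizes to realize a residue combination that the chosen perturbation has made impossible, and counting over all copies gives a contradiction with $h \mid n$.

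The main obstacle I expect is step (3) together with the bookkeeping in step (1): one has to choose the part sizes and the internal bipartite graph so that \emph{simultaneously} (a) every vertex of $G$ has degree at least the required linear function of its rank — note the bound varies with $i$, so this is genuinely a degree‑\emph{sequence} check, not a single minimum‑degree check — and (b) the perturbation by $C$ really does obstruct the tiling for \emph{every} admissible colour‑class multiset of $H$, not just the balanced one. Since $H$ may have many optimal colourings (indeed $\mathcal D(H)$ can be rich), one must pick the perturbation to lie outside the lattice generated by all the achievable part‑size vectors; this is where $\mathrm{hcf}(H)=1$ is \emph{not} assumed to help us — on the contrary, we are constructing a counterexample, so we only need \emph{one} bad residue, and the freedom in choosing which part to perturb and by how much (a multiple of an appropriate divisor, of size $\Theta(\sqrt n)$) should suffice. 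A secondary, purely technical, obstacle is ensuring all quantities are integers, which is precisely why the hypothesis demands $\sqrt n$ divisible by $6h^2$ and sets $C = \sqrt n/3h^2$; I would simply verify divisibility at each arithmetic step rather than optimize these constants.
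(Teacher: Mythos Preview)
Your proposal has a genuine gap: you plan to build a \emph{divisibility barrier}, but the proposition covers graphs $H$ with $\mathrm{hcf}(H)=1$. For instance, $H=K_{2,3,4}$ has $\mathcal D(H)=\{1\}$ and hence $\mathrm{hcf}_\chi(H)=1$. In this regime the lattice generated by the admissible colour-class vectors is \emph{all} of $\mathbb Z^r$ (this is precisely why the K\"uhn--Osthus threshold drops to $\chi_{cr}$), so there is no ``bad residue'' to hit: any complete $r$-partite host with total order divisible by $h$ and parts close to the $\sigma/\omega$ ratio \emph{does} admit a perfect $H$-tiling (compare Lemma~\ref{kolem}). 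So step (4) of your plan fails outright for many of the graphs covered by the proposition, and your remark that ``we only need one bad residue'' is exactly backwards here.

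The paper's construction is not a divisibility barrier at all; it is a \emph{local} obstruction. One isolated vertex $v$ forms a singleton class $V_1$; $v$ is adjacent to everything except the class $V_3$. Hence any copy of $H$ through $v$ must place two of its colour classes entirely inside $V_2$. Since every $t_i\ge 2$, those two classes span a $K_{t_i,t_j}$ and in particular a path of length $3$; but $G[V_2]$ is a disjoint union of stars of size $\Theta(\sqrt n)$, which contains no such path. That kills every copy of $H$ through $v$ regardless of $\mathrm{hcf}(H)$. The star forest is also what makes the degree-sequence check go through: the $\sqrt n/2$ star centres gain roughly $\sqrt n$ extra neighbours, lifting them above $(1-\omega/h)n+C$, while the remaining $\approx\omega n/h-\sqrt n/3$ leaf vertices in $V_2$ sit at $(1-\omega/h)n-Cr$, which still meets the sloped bound $(1-(\omega+\sigma)/h)n+(\sigma/\omega)i+C$ for the indices $i$ they occupy. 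This interplay between the structural obstruction and the degree boost is the idea your outline is missing.
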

\proof 
\textnormal{Let $G$ denote the graph on $n$ vertices consisting of $r$ vertex classes $V_1, \dots , V_r$ with $|V_1|=1$, $|V_2|= \omega n/h+1+Cr$, $|V_3|= (\sigma +\omega)n/h -2 - 3 C$ and
$|V_i|=\omega n/h- C$ if $4 \leq i \leq r$ and which contains the following edges:}
\begin{itemize}
\item \textnormal{All possible edges with an endpoint in $V_3$ and the other endpoint in $V(G)\setminus V_1$. (In particular, $G[V_3]$ is complete.);}
\item \textnormal{All edges with an endpoint in $V_2$ and the other endpoint in $V(G)\setminus V_2$;}
\item \textnormal{All edges with an endpoint in $V_i$ and the other endpoint in $V(G)\setminus V_i$ for $4 \leq i \leq r$;}
\item \textnormal{There are $\sqrt{n}/2$ vertex-disjoint stars in $V_2$, each of size $\lfloor 2|V_2|/\sqrt{n} \rfloor$,
$\lceil 2|V_2|/\sqrt{n} \rceil$, which cover all of $V_2$.}
\end{itemize}
\textnormal{In particular, note that the vertex $v \in V_1$ sends all possible edges to $V(G) \setminus V_3$ but no edges to $V_3$.}

\textnormal{Let $d_1 \leq \dots \leq d_n$ denote the degree sequence of $G$.
Notice that every vertex in $V_i$ for $3 \leq i \leq r$ has degree at least $(1-\omega/h)n+C$. 
Note that $\lfloor 2|V_2|/\sqrt{n} \rfloor \geq 2 \sqrt{n}/h =6Ch \geq 6Cr$.
Thus, there are $\sqrt{n}/2$ vertices in $V_2$ of degree at least
$$(1-\omega /h)n-1-Cr +(6Cr-1)  \geq (1-\omega /h)n+C.$$
The remaining $\omega n/h+1+Cr -\sqrt{n}/2 \leq \omega n/h - \sqrt{n}/3 -1$ vertices in $V_2$ have degree at least}
$$(1-\omega /h)n-Cr\geq (1-\omega /h)n  -\sigma \sqrt{n}/3\omega + C.$$
\textnormal{Since $d_G (v)\geq \left(1 - \frac{\omega+\sigma}{h}\right)n  + C+\sigma/\omega$ for the vertex $v \in V_1$ we have that}
$$d_i \geq \left(1 - \frac{\omega+\sigma}{h}\right)n + \frac{\sigma}{\omega}i + C\ \ \mbox{for all \ $1 \leq i \leq \frac{\omega n}{h}$}.$$

\textnormal{Suppose that $v \in V_1$ lies in a copy $H'$ of $H$ in $G$.
Then by construction of $G$, two of the vertex classes $U_1,U_2$ of $H'$ must lie entirely in $V_2$. By definition of $H$, $H'[U_1\cup U_2]$ contains a path of length $3$. However, $G[V_2]$ does not contain a path of
length $3$, a contradiction. Thus, $v$ does not lie in a copy of $H$ and so $G$ does not contain a perfect $H$-tiling.}
\endproof

\smallskip

\noindent
\emph{\bf{Extremal Example 2.}}
We require the following definitions. Let $t \in \mathbb{N}$. We will refer to a vertex class of size $t$ of $G$ as a \textit{$t$-class} of $G$. Set $r, \sigma, \omega \in \mathbb{N}$ and $\sigma <\omega$. We define the \textit{$r$-partite bottle graph $B$ with neck $\sigma$ and width $\omega$} to be the complete $r$-partite graph with one $\sigma$-class and $(r-1)$ $\omega$-classes.

 Let $\eta > 0$ be fixed. Let $B$ be an $r$-partite bottle graph with neck $\sigma$ and width $\omega$. The following extremal example (adapted from Proposition 3.1 in \cite{hlt}) $G$ on $n$ vertices demonstrates that Theorem~\ref{mainthm} is best possible for such graphs $B$, in the sense that $G$ satisfies the degree sequence of Theorem~\ref{mainthm} except for a small linear part that only just fails the degree sequence, but does not contain a perfect $B$-tiling. In fact, $G$ does not contain a $B$-tiling that covers all but at most $\eta n$ vertices.

\begin{prop}\label{exex2}
Let $\eta > 0$ be fixed and $n \in \mathbb{N}$ such that $0 < 1/n \ll \eta \ll 1$. Let $B$ be a bottle graph with neck $\sigma$ and width $\omega$, where $b := |B|$. 
Additionally assume that $b$ divides $n$.
Then for any $1 \leq k < \omega n/b - (rb + 1)\eta n$, there exists a graph $G$ on $n$ vertices whose degree sequence $d_1 \leq \ldots \leq d_n$ satisfies 
\[d_i \geq \left(1 - \frac{\omega+\sigma}{b}\right)n + \frac{\sigma}{\omega}i + \eta n \ \ \mbox{for all \ $i \in \{1, \ldots, k-1, k+rb\eta n +1, \ldots, \omega n/b$\},}\] 
\[d_i = \left(1 - \frac{\omega+\sigma}{b}\right)n + \left\lceil\frac{\sigma}{\omega}k\right\rceil + \eta n \ \ \mbox{for all \ $k \leq i \leq k+rb\eta n$,}\]  but such that $G$ does not contain a $B$-tiling covering all but at most $\eta n$ vertices.
\end{prop}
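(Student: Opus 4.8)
I would produce $G$ as a ``space‑barrier'' construction, adapting to the degree‑sequence setting the standard extremal example for the minimum‑degree version of Koml\'os' theorem applied to bottle graphs (and following the scheme of Proposition~3.1 in \cite{hlt}); the constants $rb\eta n$, etc., will fall out of the bookkeeping. The skeleton is $V(G)=L\cup H$, where $|L|$ is taken to be $\omega n/b$ minus a suitable multiple of $\eta n$, the set $L$ is independent, and $G[H]$ is a balanced complete $(r-1)$-partite graph (so $H$ contains no copy of $B$, since $\chi(B)=r>r-1$). Since $L$ is independent, all $L$-vertices of any copy of $B$ in $G$ lie in a single part of that copy, so every copy of $B$ meets $L$ in at most $\omega$ vertices and hence uses at least $b-\omega$ vertices of $H$; and since $H$ contains no copy of $B$, every copy meets $L$. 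Edges between $L$ and $H$ are then added so that every vertex of $H$ has degree at least $(1-\omega/b+\eta)n$ in $G$ (this uses $G[H]$ being complete $(r-1)$-partite and the inequality $\omega/b>(1-\omega/b)/(r-1)$, which is just a restatement of $\sigma<\omega$), while the degrees of the $L$-vertices realise the desired profile.

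Next I would realise the degree sequence with the window at the prescribed $k$. Order $L=\{a_1,\dots,a_{|L|}\}$ by increasing degree and choose the bipartite graph between $L$ and $H$ (a Gale--Ryser type realisability computation) so that, for $i\notin[k,k+rb\eta n]$, the vertex $a_i$ gets exactly the degree demanded by Theorem~\ref{mainthm}, whereas $a_k,\dots,a_{k+rb\eta n}$ all get the single flat value $\left(1-\tfrac{\omega+\sigma}{b}\right)n+\lceil\tfrac{\sigma}{\omega}k\rceil+\eta n$. Feasibility here uses the hypothesis $k<\omega n/b-(rb+1)\eta n$: it guarantees that even the largest of these prescribed $L$-degrees is still below $|H|$, and that the sum of the prescribed $L$-degrees lies in the window forced by the $H$-side constraints. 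Because $|L|<\omega n/b$, the $L$-vertices are exactly the low-degree vertices and, for $|L|<i\le\omega n/b$, the entry $d_i$ is the degree of an $H$-vertex and so automatically exceeds the required lower bound; verifying the stated inequalities on $d_1\le\cdots\le d_n$ is then a short direct computation.

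Finally I would show $G$ has no $B$-tiling covering all but $\eta n$ vertices. Suppose $\mathcal T$ is such a tiling, with $|\mathcal T|=t\ge(1-\eta)n/b$; from $(b-\omega)t\le|H|$ we get $t\le n/b+O(\eta n)$, so $t$ is pinned into a narrow range. Splitting $H$ into its $r-1$ classes and noting that a copy meeting $H$ in exactly $b-\omega$ vertices must place one neck-sized and $r-2$ width-sized parts there, one shows that any such $\mathcal T$ is forced into an essentially rigid pattern: the copies must use almost all of $H$, for each class $H_j$ the number of copies placing a neck in $H_j$ is forced to a fixed positive value (here one uses $(r-1)\omega=b-\sigma$, which makes the class‑size inequalities tight), and consequently almost every copy has a width part lying entirely in $L$. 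Counting $L$-degrees against this rigid structure and using that the $rb\eta n$ window vertices all carry a degree that is $\Theta(\eta n)$ below what is needed to accommodate them inside the forced pattern, one obtains a deficit of more than $\eta n$ uncoverable vertices of $L$ --- a contradiction.

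The main obstacle is the last step: one must show that the (near‑)rigid structure of any near‑perfect $B$-tiling of the skeleton genuinely clashes with the depressed window of the degree sequence. Put differently, the content is that $\sigma/\omega$ is exactly the marginal degree needed per extra low-degree vertex absorbed into a $B$-tiling, so that flattening $rb\eta n$ consecutive entries by the permitted $\Theta(\eta n)$ already blocks a cover. Making the three free parameters --- the size of $L$, the $L$-to-$H$ bipartite degrees, and the window length $rb\eta n$ --- line up so that both the degree-sequence check and the deficit count go through simultaneously is the delicate part, and is precisely where mirroring \cite[Proposition~3.1]{hlt} is what makes the argument work.
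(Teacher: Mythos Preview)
Your skeleton is not the one that works, and the final ``degree-deficit'' step is the gap. In the paper the graph is built on $r+1$ classes $V_1,\dots,V_{r+1}$, not on an independent set $L$ versus a balanced complete $(r-1)$-partite $H$. The class $V_1$ (of size $\sigma n/b$) is a clique, the class $V_2$ (of size $\omega n/b-\eta n$) is independent, and the crucial feature is that the bipartite graph between $V_2$ and $V_1$ is \emph{nested}: labelling $V_2=\{c_1,\dots\}$ and $V_1=\{a_1,\dots\}$, one puts in exactly the edges $c_ia_j$ with $j\le\lceil\sigma i/\omega\rceil$ and then deletes the edges $c_ia_j$ for $k\le i\le k+rb\eta n$ and $\lceil\sigma k/\omega\rceil<j\le\lceil\sigma(k+rb\eta n)/\omega\rceil$. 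This produces the stated degree sequence (the $c_i$ are precisely the low-degree vertices) and, more importantly, it forces the set $C:=\{c_1,\dots,c_{k+rb\eta n}\}$ to have \emph{no} edges into $V_1\setminus A$, where $A:=\{a_1,\dots,a_{\lceil\sigma k/\omega\rceil}\}$. The small extra class $V_{r+1}$ (of size $(r-1)(\eta n+1)-1$, complete to everything) is only there to top up the degrees of $V_3,\dots,V_r$ and to make the sizes add to $n$.

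The non-tiling argument is then a two-line space barrier, with no rigidity or edge-counting at all. Any copy of $B$ in a tiling that meets $C$ and avoids $V_{r+1}$ must (since $C$ is independent and $C$ sees only $A$ inside $V_1$) place at most $\omega$ vertices in $C$ and at least $\sigma$ vertices in $A$; hence such copies cover at most $|A|\cdot(\omega/\sigma)\le k+\tfrac{1}{2}\eta n$ vertices of $C$. Copies that do meet $V_{r+1}$ cover at most $\omega|V_{r+1}|\le(r-1)\omega\eta n+\omega(r-2)$ vertices of $C$. Summing, at most $k+\bigl(b(r-1)+\tfrac{1}{2}\bigr)\eta n$ vertices of $C$ are covered, leaving at least $|C|-\bigl(k+(b(r-1)+\tfrac12)\eta n\bigr)\ge\tfrac32\eta n>\eta n$ uncovered.

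Your proposal lacks precisely this nested-neighbourhood feature. Realising the $L$-degrees by a generic Gale--Ryser argument does not pin the neighbourhoods of the window vertices into a small common set, and without that there is no reason the window vertices cannot be tiled: a vertex needs only $b-1$ well-placed edges to sit in a copy of $B$, so being $\Theta(\eta n)$ below the target degree is no local obstruction. The ``rigid pattern plus degree deficit'' sketch does not turn into an actual count, and I do not see how to rescue it without rebuilding the staircase into a designated neck class as above. Once you do that, the proof becomes short and purely combinatorial, with no appeal to rigidity of near-perfect tilings.
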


\begin{proof} \textnormal{Let $G$ be the graph on $n$ vertices with $r+1$ vertex classes $V_1, \ldots, V_{r+1}$ where }
\begin{itemize}
    \item $|V_1| = \sigma n/b$;
    \item $|V_2| = \omega n/b - \eta n$; 
    \item $|V_3| = \ldots = |V_r| = \omega n/b - (\eta n + 1)$;
    \item $|V_{r+1}| = (r-1)(\eta n + 1) - 1$.
\end{itemize} \textnormal{Label the vertices of $V_1$ as $a_1, a_2, \ldots, a_{\sigma n/b}$. Similarly, label the vertices of $V_2$ as $c_1, c_2, \ldots, c_{\omega n/b - \eta n}$. The edge set of $G$ is constructed as follows.}

\textnormal{Firstly, let $G$ have the following edges:}

\begin{itemize}
     \item \textnormal{All edges with an endpoint in $V_1$ and the other endpoint in $V(G)\setminus V_2$, in particular $G[V_1]$ is complete;}
     \item \textnormal{All edges with an endpoint in $V_i$ and the other endpoint in $V(G)\setminus (V_1 \cup V_i) $ for $2  \leq i \leq r+1$;}
     \item \textnormal{All edges with both endpoints in $V_{r+1}$, in particular $G[V_{r+1}]$ is complete;}
     \item \textnormal{Given any $1 \leq i \leq \omega n/b - \eta n$ and $j \leq \lceil\sigma i/\omega\rceil$ include all edges  $c_i a_j$.}
\end{itemize}

\textnormal{So at the moment $G$ does satisfy the degree sequence in Theorem~\ref{mainthm}; we therefore modify $G$ slightly.
For all $k~\leq~i~\leq~k~+~rb\eta n$ and $\lceil\sigma k/\omega \rceil + 1\leq j \leq \lceil\sigma (k+rb\eta n)/\omega\rceil$ delete each edge between $c_i$ and $a_j$.
One can easily check that $G$ satisfies the degree sequence in the statement of the proposition. In particular, the vertices of degree $\left(1 - \frac{\omega+\sigma}{b}\right)n + \lceil\frac{\sigma}{\omega}k\rceil + \eta n$ are $c_{k},\dots,
c_{k+rb\eta n}$.}

\textnormal{Define $A := \{a_1, \ldots, a_{\lceil\sigma k/\omega\rceil}\}$ and $C := \{c_1, \ldots, c_{k + rb\eta n}\}$. Note that there are no edges between $C$ and $V_1 \setminus A$ in $G$.}

\begin{claim} \label{exexampclaim} \textnormal{Let $T$ be a $B$-tiling of $G$. Then $T$ does not cover at least $3\eta n/2$ vertices in $C$.}
\end{claim}
\textnormal{Firstly, consider any copy $B'$ of $B$ in $T$ that contains at least one vertex in $V_{r+1}$. Since $C$ is an independent set in $G$, observe that $B'$ contains at most $\omega$ vertices from $C$. Thus there are at most $\omega|V_{r+1}| = \omega(r-1)\eta n + \omega(r-2)$ vertices in $C$ covered by copies of $B$ in $T$ that each contain at least one vertex in $V_{r+1}$.} 

\textnormal{Secondly, consider any copy $B'$ of $B$ in $T$ that contains at least one vertex from $C$ and no vertices from $V_{r+1}$. 
As before, since $C$ is an independent set in $G$, we have that $B'$ contains at most $\omega$ vertices from $C$.
Since there are no edges between $C$ and $V_1 \setminus A$ in $G$,
 $B'$ contains at least $\sigma$ vertices in $A$.} 

\textnormal{These two observations imply that at most $\omega(r-1)\eta n + \omega(r-2) + \lceil\sigma k/\omega\rceil(\omega/\sigma)< k + \left(b(r-1) + \frac{1}{2}\right)\eta n$ vertices in $C$ can be covered by $T$. Since $|C| = k + rb\eta n$, we have that $T$ does not cover at least $3\eta n/2$ vertices in $C$. Therefore, Claim \ref{exexampclaim} holds. Hence $G$ does not have a $B$-tiling covering all but at most $\eta n$ vertices.}
\end{proof}

\smallskip

\noindent
\emph{\bf{Extremal Example 3.}} 
Let $H$ be an $h$-vertex graph, $\chi (H)=:r$, $\sigma:=\sigma (H)$ and $\omega :=(h-\sigma)/(r-1)$.
The following extremal example  demonstrates that there are $n$-vertex graphs $G$  for which all but slightly more than $\omega n/h$ vertices have degree above 
$(1-1/\chi _{cr} (H)+o(1))n$, and the remaining vertices have degree $(1-1/\chi _{cr} (H)-o(1))n$, and yet $G$ does not contain a perfect $H$-tiling.
Thus, this shows that one cannot have significantly more than $\omega n/h$ `small' degree vertices in Theorem~\ref{mainthm}.
\COMMENT{AT: rewritten this paragraph as what was written was very wrong. Please check why. In particular, you could have more than $\omega n/h$ small degree vertices and still contain a perfect $H$-tiling (e.g. Let $G$ be a collection of disjoint $H$)}
\COMMENT{AT: replaced proposition with a much easier version. Please go through}
\begin{prop}\label{exex3prop}
Let $\eta > 0$ be fixed. Let $H$ be a graph with $\chi(H) =: r$. Let $h:=|H|$, $\sigma := \sigma(H)$ and set $\omega := (h-\sigma)/(r-1)$. Then there exists a graph $G$ on $n$ vertices whose degree sequence $d_1 \leq \ldots \leq d_n$ satisfies 
$$d_i = \left(1 - \omega/h - (r-1)\eta\right)n =\left (1-1/\chi _{cr}(H) -(r-1)\eta \right )n\ \ \text{for all } \ i \leq (\omega/h  + (r-1)\eta)n,$$
$$d_i \geq \left(1 - \omega/h + \eta\right)n= \left (1-1/\chi _{cr}(H) +\eta \right )n \ \ \text{ for all  } \ i > (\omega/h  + (r-1)\eta) n,$$
but such that $G$ does not contain an $H$-tiling covering all but at most $\eta n$ vertices.
\end{prop}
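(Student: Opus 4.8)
The plan is to build an explicit divisibility-barrier construction, mimicking the standard extremal graphs behind Theorem~\ref{kothm1}, but arranging the vertex classes so that only a small linear set of vertices has degree just below the $(1-1/\chi_{cr}(H))n$ threshold, while everything else has degree just above it. First I would take a partition of $V(G)$ into $r+1$ classes $V_1, \dots, V_{r+1}$, where $V_1$ plays the role of the oversized colour class that can only be used as a ``small'' ($\sigma$-sized) class in any copy of $H$, and $V_{r+1}$ is a small ``sink'' class whose size is chosen so that divisibility fails. Concretely, make $|V_1|$ slightly larger than the total number of vertices that a perfect $H$-tiling could place into the role of smallest colour class — i.e.\ slightly more than $\sigma n / h$ — and make the remaining classes $V_2, \dots, V_r$ each of size roughly $\omega n/h$, with $V_{r+1}$ of size $(r-1)\eta n$ (up to a $\pm 1$ adjustment to get exact divisibility and the exact index in the degree sequence). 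The edge set: all edges except those inside $V_1$ and inside each $V_i$ for $2 \le i \le r$, together with $G[V_{r+1}]$ complete and all edges from $V_{r+1}$ to the rest; crucially, make $V_1$ \emph{independent} to $most$ of one other class so that vertices of $V_1$ end up with degree $\left(1 - \omega/h - (r-1)\eta\right)n$, and these are the $(\omega/h + (r-1)\eta)n$ low-degree vertices.

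Then I would verify the two degree-sequence equalities/inequalities by a direct count: each vertex of $V_1$ has degree $n - |V_1| - (\text{size of the class it is non-adjacent to})$, which after choosing the class sizes equals $\left(1 - \omega/h - (r-1)\eta\right)n$ exactly; every vertex outside $V_1$ is adjacent to all of $V_1$ and to all of $V_{r+1}$ and misses only its own (size $\approx \omega n/h$ or smaller) class, giving degree $\ge \left(1 - \omega/h + \eta\right)n$. One has to be slightly careful ordering the degree sequence so that precisely the first $(\omega/h + (r-1)\eta)n$ entries are the $V_1$-vertices — this is where the $\pm 1$ fiddling with $|V_{r+1}|$ and the other class sizes comes in, exactly as in Extremal Example~2.

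Finally, the tiling obstruction: suppose $G$ had a $B$-tiling, sorry, an $H$-tiling covering all but at most $\eta n$ vertices. Each copy of $H$ in $G$, by the independence of $V_1$ and the absence of edges inside $V_2,\dots,V_r$, must have each of its colour classes contained in a single $V_i$ (or meeting $V_{r+1}$); since $V_1$ is independent, a copy of $H$ can intersect $V_1$ in at most $\sigma$ vertices unless it uses vertices of $V_{r+1}$ to ``absorb'' extra classes, and $|V_{r+1}|$ is too small to help more than $o(n)$-many copies. Counting: at most $\frac{n}{h}$ copies of $H$ fit, each covering at most $\sigma$ vertices of $V_1$ (plus a negligible correction of at most $|V_{r+1}| \le (r-1)\eta n$ from copies meeting $V_{r+1}$), so at most $\sigma n / h + O(\eta n) < |V_1| - \eta n$ vertices of $V_1$ get covered, leaving more than $\eta n$ vertices of $V_1$ uncovered — contradiction. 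The main obstacle, and the place requiring genuine care rather than routine computation, is pinning down the class sizes so that (a) $h \mid n$ is consistent with all the $\pm 1$ adjustments, (b) the degree sequence comes out \emph{exactly} as stated with the break occurring at index $(\omega/h + (r-1)\eta)n$, and (c) the counting bound $\sigma n/h + \omega|V_{r+1}|$ is strictly less than $|V_1| - \eta n$; all three are simultaneously satisfiable but only for the right choice of $|V_1| = (\sigma/h + r\eta)n + O(1)$ and $|V_{r+1}| = (r-1)\eta n + O(1)$.
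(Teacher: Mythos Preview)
Your construction has a genuine internal inconsistency that prevents it from producing the stated degree sequence. You propose $|V_1|=(\sigma/h+r\eta)n+O(1)$ and then say the $V_1$-vertices are ``precisely the first $(\omega/h+(r-1)\eta)n$ entries'' of the degree sequence. But for any $H$ with $\sigma(H)<h/r$ (equivalently $\sigma<\omega$), these two numbers differ by a linear amount, so $V_1$ cannot supply enough low-degree vertices. Conversely, if you enlarge $V_1$ to size $(\omega/h+(r-1)\eta)n$ so that the count matches, then your tiling argument (``each copy of $H$ covers at most $\sigma$ vertices of $V_1$'') breaks down: an independent class of that size can absorb a colour class of size up to $x_{c,r}$, not only $\sigma$, so the bound $\sigma n/h+\omega|V_{r+1}|<|V_1|-\eta n$ is no longer valid. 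You are also framing this as a divisibility barrier, whereas no divisibility issue is needed here.

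The paper's proof is much simpler and avoids all of this: take $G$ to be a \emph{complete} $r$-partite graph with classes of sizes $|V_1|=\sigma n/h-\eta n$, $|V_2|=\omega n/h+(r-1)\eta n$, and $|V_3|=\dots=|V_r|=\omega n/h-\eta n$. The low-degree vertices are exactly those in the oversized class $V_2$, giving the required $(\omega/h+(r-1)\eta)n$ entries of degree $n-|V_2|=(1-\omega/h-(r-1)\eta)n$; all other vertices have degree at least $(1-\omega/h+\eta)n$. The obstruction is a \emph{space barrier}: since $\chi(H)=r$, every copy of $H$ in a complete $r$-partite graph must meet each part, in particular using at least $\sigma$ vertices of the undersized class $V_1$; hence any $H$-tiling covers at most $|V_1|\cdot h/\sigma=n-\eta n h/\sigma<n-\eta n$ vertices. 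No $(r{+}1)$st class, no non-trivial edge deletions, and no divisibility considerations are required.
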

\begin{proof} Let $G$ be the complete $r$-partite graph on $n$ vertices with vertex classes $V_1, \ldots, V_{r}$ such that
\begin{itemize}
\item \textnormal{$|V_1| = \frac{\sigma n}{h} - \eta n$, }
\item \textnormal{$|V_2| = \frac{\omega n}{h} + (r-1)\eta n$, }
\item \textnormal{$|V_3| = \ldots = |V_r| = \frac{\omega n}{h} - \eta n $.}
\end{itemize} 
Then $G$ satisfies the degree sequence condition in the proposition. The choice in size of $V_1$ ensures that any $H$-tiling in $G$ covers at most $|V_1|h/\sigma< n- \eta n$ vertices, as desired.
\end{proof}

\subsection{A possible strengthening of Theorem~\ref{mainthm}}
Whilst Proposition~\ref{square} demonstrates that we cannot lower \emph{every} term in the degree sequence condition in Theorem~\ref{mainthm} by much, perhaps one can cap the degrees as follows.
\begin{ques}\label{ques1}
Can the degree sequence condition in Theorem~\ref{mainthm} be replaced by 
$$d_i \geq \min \left \{ \left(1 - \frac{\omega+\sigma}{h}\right)n + \frac{\sigma}{\omega}i + \eta n, 
\left(1 - \frac{1}{\chi _{cr} (H)}\right)n+C \right \}
\ \ \mbox{for all \ $1 \leq i \leq \frac{\omega n}{h}$}$$
where $C$ is a constant dependent only on $H$?
\end{ques}

Note that Theorem~\ref{hydetregsdegseqkothm} does not quite imply the K\"uhn--Osthus theorem (Theorem~\ref{kothm1}) due to the $\eta n$-terms. On the other hand, an affirmative answer to Question~\ref{ques1}, together with 
an analogous `capped' version of Theorem~\ref{hydetregsdegseqkothm}(ii), would fully imply the upper bound in Theorem~\ref{kothm1}.
\COMMENT{JH: Just to check, would we also need to show that $C\geq -1$ and for some graphs $H$ that $C(H) = -1$?
AT: No, I think you must have miss understood something. The lower bound just comes from a construction of a graph with min degree $(1-1/\chi _{cr} (H))n-1$ without a perfect $H$-tiling}

\section{Notation and Definitions}\label{sectnotate}

Let $G$ be a graph. We define $V(G)$ to be the vertex set of $G$ and $E(G)$ to be the edge set of $G$. Let $X \subseteq V(G)$. Then $G[X]$ is the \textit{graph induced by $X$ on $G$} and has vertex set $X$ and edge set $E(G[X]) := \{xy \in E(G): x,y \in X\}$. We also define $G\setminus X$ to be the graph with vertex set $V(G)\setminus X$ and edge set $E(G\setminus X):= \{xy \in E(G): x,y \in V(G)\setminus X\}$. For each $x \in V(G)$, we define the \textit{neighbourhood of $x$ in $G$} to be $N_G(x):= \{y\in V(G): xy \in E(G)\}$ and define $d_G(x) := |N_G(x)|$. We drop the subscript $G$ if it is clear from context which graph we are considering. 
We write $d_G(x,X)$ for the number of edges in $G$ that $x$ sends to vertices in $X$. Given a subgraph $G'\subseteq G$, we will write $d_G(x,G'):=d_G(x,V(G'))$.
Let $A, B \subseteq V(G)$ be disjoint. Then we define $e_G(A,B):=|\{xy \in E(G): x\in A, y\in B\}|$.

Let $t \in \mathbb{N}$. We define the \emph{blow-up} $G(t)$ to be the graph constructed by first replacing each vertex $x \in V(G)$ by a set $V_x$ of $t$ vertices and then replacing each edge $xy \in E(G)$ with the edges of the complete bipartite graph with vertex sets $V_x$ and $V_y$.

We write $0 < a \ll b \ll c < 1$ to mean that we can choose the constants $a,b,c$ from right to left. More precisely, there exist non-decreasing functions $f: (0,1] \to (0,1]$ and $g: (0,1] \to (0,1]$ such that for all $a \leq f(b)$ and $b \leq g(c)$ our calculations and arguments in our proofs are correct. Larger hierarchies are defined similarly. Note that $a \ll b$ implies that we may assume e.g. $a < b$ or $a < b^2$.

\section{Auxiliary results}\label{sectaux}
\subsection{The regularity and blow-up lemmas}
The results in this section will be employed in our proof of Theorem~\ref{mainthmsigma}. First we need the following definitions.

\begin{define}
\textnormal{Let $G =(A,B)$ be a bipartite graph with vertex classes $A$ and $B$. We define the \textit{density} of $G$ to be $$d_G(A,B) := \frac{e_G(A,B)}{|A||B|}.$$ 
Set $\varepsilon > 0$. We say that $G$ is \textit{$\varepsilon$-regular} if for all $X \subseteq A$ and $Y \subseteq B$ with $|X| > \varepsilon|A|$ and $|Y| > \varepsilon|B|$ we have that $|d_G(X,Y) - d_G(A,B)| < \varepsilon$.} 
\end{define}

\begin{define}
\textnormal{Given $\varepsilon > 0$, $d \in [0, 1]$ and $G = (A,B)$ a bipartite graph, we say that $G$ is \emph{$(\varepsilon, d)$-superregular} if all sets $X \subseteq A$ and $Y \subseteq B$ with $|X| \geq \varepsilon|A|$ 
and $|Y| \geq \varepsilon|B|$ satisfy that $d(X, Y ) > d$, that $d_G(a) > d|B|$ for all $a \in A$ and that $d_G(b) > d|A|$ for all $b \in B$.}
\end{define}

The following groundbreaking result of Szemer\'{e}di \cite{sze} will be instrumental in our proof of Theorem \ref{mainthmsigma}.

\begin{lem}[Degree form of Szemer\'{e}di's Regularity lemma \cite{sze}] \label{degformreglemma} 
Let $\varepsilon \in (0,1)$ and $M' \in \mathbb{N}$. Then there exist natural numbers $M$ and $n_0$ such that for any graph $G$ on $n \geq n_0$ 
vertices and any $d \in (0,1)$ there is a partition of the vertices of $G$ into subsets $V_0, V_1, \ldots, V_k$ and a spanning subgraph $G'$ of $G$ such that the following hold:

\begin{itemize}
    \item $M' \leq k \leq M$;
    \item $|V_0| \leq \varepsilon n$;
    \item $|V_1| = \ldots = |V_k| =: q$;
    \item $d_{G'}(x) > d_{G}(x) - (d+ \varepsilon)n$ for all $x \in V(G)$;
    \item $e(G'[V_i]) = 0$ for all $i \geq 1$;
    \item For all $1 \leq i,j \leq k$ with $i \neq j$ the pair $(V_i, V_j)_{G'}$ is $\varepsilon$-regular and has density either $0$ or at least $d$.
\end{itemize}
\end{lem}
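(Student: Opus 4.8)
The plan is to deduce this ``degree form'' from the original partition form of Szemer\'edi's Regularity Lemma~\cite{sze} by a standard cleaning-up of the resulting partition followed by a deletion of edges. Recall that the original form asserts: for every $\varepsilon'>0$ and every $M_0\in\mathbb N$ there exist $M,n_0'\in\mathbb N$ so that every graph on $n\ge n_0'$ vertices has a partition $V(G)=W_0\cup W_1\cup\dots\cup W_k$ with $M_0\le k\le M$, $|W_0|\le\varepsilon' n$, $|W_1|=\dots=|W_k|=:m$, and with all but at most $\varepsilon' k^2$ of the pairs $(W_i,W_j)$, $1\le i<j\le k$, being $\varepsilon'$-regular. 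First I would fix an auxiliary constant $\varepsilon'$ with $0<\varepsilon'\ll\varepsilon$ (a polynomial relation such as $\varepsilon'\le\varepsilon^2/16$ suffices), and apply the original form with parameters $\varepsilon'$ and $M_0:=\max\{2M',\lceil 8/\varepsilon\rceil\}$; the resulting $M$ and $n_0$ (enlarged so that the part size will be positive) depend only on $\varepsilon$ and $M'$, as required, and in particular do not involve $d$. Given $G$ on $n\ge n_0$ vertices and $d\in(0,1)$, apply this to get $W_0,\dots,W_k$; note $k\ge 8/\varepsilon$ forces $m\le n/k\le\varepsilon n/8$.

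Next I would clean up the partition. Call a part $W_i$ ($i\ge1$) \emph{irregular-heavy} if it lies in more than $\sqrt{\varepsilon'}\,k$ pairs that are not $\varepsilon'$-regular; since there are at most $\varepsilon' k^2$ such pairs in total, there are at most $2\sqrt{\varepsilon'}\,k$ irregular-heavy parts. For an $\varepsilon'$-regular pair $(W_i,W_j)$ with $d_G(W_i,W_j)<d+2\varepsilon'$, the usual counting consequence of regularity shows that at most $\varepsilon' m$ vertices $x\in W_i$ have $|N_G(x)\cap W_j|>(d_G(W_i,W_j)+\varepsilon')m$; call $x\in W_i$ \emph{density-heavy} if it has this property for more than $\sqrt{\varepsilon'}\,k$ indices $j$, and double counting shows each part contains at most $\sqrt{\varepsilon'}\,m$ density-heavy vertices. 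Now move into $W_0$ every vertex of every irregular-heavy part together with every density-heavy vertex, and then trim the surviving parts down to their common minimum size, moving the trimmed vertices to $W_0$ as well; from each surviving part this removes at most $\sqrt{\varepsilon'}\,m$ vertices in total, costing at most $\sqrt{\varepsilon'}\,km\le\sqrt{\varepsilon'}\,n$ further vertices. Writing $V_0$ for the enlarged exceptional set and $V_1,\dots,V_{k'}$ for the relabelled surviving parts, now of common size $q\ge(1-\sqrt{\varepsilon'})m$, one checks $|V_0|\le\varepsilon' n+3\sqrt{\varepsilon'}\,n\le\varepsilon n$ and $M'\le(1-2\sqrt{\varepsilon'})k\le k'\le M$; moreover, since regularity is inherited (with a mildly worse parameter, and with almost unchanged density) by pairs of subsets each of relative size close to $1$ in its part, every $V_i$ ($i\ge1$) that came from an $\varepsilon'$-regular pair $(W_i,W_j)$ with $d_G(W_i,W_j)\ge d+2\varepsilon'$ still forms an $\varepsilon$-regular pair with $V_j$ of density $\ge d$.

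Then I would define $G'$ by deleting from $G$ exactly: all edges inside some $V_i$ with $i\ge1$; all edges between distinct parts $V_i,V_j$ ($i,j\ge1$) whose originating pair $(W_i,W_j)$ was not $\varepsilon'$-regular; and all edges between distinct $V_i,V_j$ ($i,j\ge1$) with $d_G(W_i,W_j)<d+2\varepsilon'$. No edge incident to $V_0$ is deleted. Then each $V_i$ ($i\ge1$) is independent in $G'$, and each pair $(V_i,V_j)_{G'}$ with $i\ne j$ is either empty (hence $\varepsilon$-regular of density $0$) or equal to $(V_i,V_j)_G$ and so $\varepsilon$-regular of density $\ge d$ by the previous paragraph, so all the structural conditions hold. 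For the degree condition: if $x\in V_0$ then no edge at $x$ was removed, so $d_{G'}(x)=d_G(x)$; if $x\in V_i$ with $i\ge1$, then, using that $V_i$ survived (so $W_i$ is not irregular-heavy) and that $x$ is not density-heavy, the number of edges removed at $x$ is at most
\[
\underbrace{q}_{\text{inside }V_i}\;+\;\underbrace{\sqrt{\varepsilon'}\,km}_{\text{irregular pairs at }W_i}\;+\;\underbrace{\sqrt{\varepsilon'}\,km\;+\;\textstyle\sum_{j:\,d_G(W_i,W_j)<d+2\varepsilon'}(d_G(W_i,W_j)+\varepsilon')\,m}_{\text{low-density pairs at }W_i}\;\le\;dn+\Bigl(\tfrac{\varepsilon}{8}+2\sqrt{\varepsilon'}+3\varepsilon'\Bigr)n\;<\;dn+\varepsilon n,
\]
where we used $km\le n$ throughout (so that each $\sqrt{\varepsilon'}\,km\le\sqrt{\varepsilon'}\,n$ and $\sum_j(d_G(W_i,W_j)+\varepsilon')m\le(d+3\varepsilon')km\le(d+3\varepsilon')n$) and $q\le m\le\varepsilon n/8$; this yields $d_{G'}(x)>d_G(x)-(d+\varepsilon)n$.

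I expect the genuine obstacle to be obtaining the \emph{per-vertex} degree bound rather than merely an average one. Deleting all edges of every low-density pair is unavoidable (to make such a pair have density either $0$ or at least $d$ we cannot add edges), yet a single vertex $x$ could be adjacent to almost all of many parts $W_j$ with which $W_i$ forms a low-density pair, which would destroy almost its entire neighbourhood; the point of the clean-up is precisely that regularity forces only a $\sqrt{\varepsilon'}$-fraction of vertices per part to behave this badly, and those few are swept into $V_0$. Everything else --- tracking $|V_0|\le\varepsilon n$, $k'\ge M'$, the equalisation of part sizes, and the ``slicing'' step that keeps the surviving pairs $\varepsilon$-regular with density $\ge d$ --- is routine bookkeeping once the hierarchy $\varepsilon'\ll\varepsilon$ is chosen appropriately.
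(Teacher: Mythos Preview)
The paper does not prove this lemma at all; it is quoted as the ``Degree form of Szemer\'edi's Regularity lemma'' and attributed to~\cite{sze} (with the derivation of this form from the original being folklore, see e.g.\ the Koml\'os--Simonovits survey~\cite{KomlosSimonovits}). So there is no in-paper argument to compare your proposal against.

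That said, your derivation is the standard one and is correct. The key non-routine point---that deleting \emph{all} edges of each low-density pair could in principle cost a single vertex far more than $dn$ edges---is handled exactly as it should be: regularity guarantees that in each $\varepsilon'$-regular pair only an $\varepsilon'$-fraction of vertices have atypically high degree into the other side, and a Cauchy--Schwarz/double-counting step (your ``density-heavy'' notion) lets you sweep the few globally bad vertices into $V_0$ while keeping $|V_0|\le\varepsilon n$. The remaining bookkeeping (slicing to retain $\varepsilon$-regularity and density $\ge d$ on the surviving pairs, equalising part sizes, and checking $M'\le k'\le M$) is routine and your hierarchy $\varepsilon'\le\varepsilon^2/16$, $M_0\ge\max\{2M',8/\varepsilon\}$ is adequate for all of it. One cosmetic point: you might say explicitly that the partition $V_0,\dots,V_{k'}$ is allowed to depend on $d$ (only $M$ and $n_0$ must not), since your clean-up step does use $d$; you note this in passing but it is worth emphasising.
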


We call $V_1, \ldots, V_{k}$ the \textit{clusters} of our partition, $V_0$ the exceptional set and $G'$ the \textit{pure graph}. We define the \textit{reduced graph} $R$ of $G$ with parameters $\varepsilon$, $d$ and $M'$ to be the graph whose vertex set is $V_1, \ldots, V_k$ and in which $V_iV_j$ is an edge if and only if $(V_i, V_j)_{G'}$ is $\varepsilon$-regular with density at least $d$. Note also that $|R| = k$.

We will apply the following well known fact, in conjunction with Lemma \ref{keylem} (below), in Section~\ref{sectabsorb}.

\begin{fact} \label{slicinglemma}
Let $0 < \varepsilon < \alpha$ and $\varepsilon':= \max\{\varepsilon/\alpha, 2\varepsilon\}$. Let $(A,B)$ be an $\varepsilon$-regular pair of density $d$.
Suppose $A' \subseteq A$ and $B' \subseteq B$ where $|A'| \geq \alpha |A|$ and $|B'| \geq \alpha |B|$. 
Then $(A', B')$ is an $\varepsilon'$-regular pair with density $d'$ where $|d'-d| < \varepsilon$.
\end{fact}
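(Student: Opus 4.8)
The plan is to verify the two conclusions---the density estimate and the $\varepsilon'$-regularity---directly from the definition of $\varepsilon$-regularity, with no extra machinery; this is a routine unwinding of definitions.

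First I would handle the density. Since $\alpha > \varepsilon$, we have $|A'| \geq \alpha|A| > \varepsilon|A|$ and likewise $|B'| \geq \alpha|B| > \varepsilon|B|$, so $A'$ and $B'$ are admissible test sets for the $\varepsilon$-regularity of $(A,B)$. Applying the definition with $X = A'$, $Y = B'$ gives $|d_G(A',B') - d| < \varepsilon$. We then simply set $d' := d_G(A',B')$, and the required bound $|d'-d| < \varepsilon$ holds.

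Next I would handle regularity. Take arbitrary $X \subseteq A'$ and $Y \subseteq B'$ with $|X| > \varepsilon'|A'|$ and $|Y| > \varepsilon'|B'|$. The key step is the size comparison: $|X| > \varepsilon'|A'| \geq (\varepsilon/\alpha)(\alpha|A|) = \varepsilon|A|$, where we used $\varepsilon' \geq \varepsilon/\alpha$ and $|A'| \geq \alpha|A|$; similarly $|Y| > \varepsilon|B|$. Hence $X$ and $Y$ are admissible test sets for the $\varepsilon$-regularity of the original pair $(A,B)$, giving $|d_G(X,Y) - d| < \varepsilon$. Combining this with $|d_G(A',B') - d| < \varepsilon$ from the previous paragraph via the triangle inequality yields $|d_G(X,Y) - d_G(A',B')| < 2\varepsilon \leq \varepsilon'$, which is exactly the assertion that $(A',B')$ is $\varepsilon'$-regular with density $d' = d_G(A',B')$.

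There is no real obstacle here; the only point needing a moment's care is the choice of the constant $\varepsilon' = \max\{\varepsilon/\alpha, 2\varepsilon\}$: the $\varepsilon/\alpha$ term is precisely what is needed so that a set exceeding an $\varepsilon'$-fraction of $A'$ automatically exceeds an $\varepsilon$-fraction of $A$, while the $2\varepsilon$ term absorbs the loss incurred by comparing $d_G(X,Y)$ to $d_G(A',B')$ through $d$ using the triangle inequality. Both features of the definition are therefore used, and exactly once.
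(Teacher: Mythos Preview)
Your proof is correct and is exactly the standard argument for the slicing lemma. The paper itself does not supply a proof of this fact---it is stated as a ``well known fact'' and left unproven---so there is nothing to compare against; your write-up would serve perfectly well as the omitted justification.
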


\begin{lem}[Key lemma \cite{KomlosSimonovits}]\label{keylem}
Suppose that $0 < \varepsilon < d$, that $q, t \in \mathbb{N}$ and that $R$ is a graph with $V(R) = \{v_1, \ldots, v_k\}$. 
We construct a graph $G$ as follows: Replace every vertex $v_i \in V(R)$ with a set $V_i$ of $q$ vertices and replace each edge of $R$ with an $\varepsilon$-regular pair of density at least $d$. For each $v_i \in V(R)$, let $U_i$ denote the set of $t$ vertices in $R(t)$ corresponding to $v_i$.
Let $H$ be a subgraph of $R(t)$ with maximum degree $\Delta$ and set $h := |H|$. Set $\delta := d - \varepsilon$ and $\varepsilon_0 := \delta^{\Delta}/(2 + \Delta)$.
If $\varepsilon \leq \varepsilon_0$ and $t-1 \leq \varepsilon_0q$ then there are at least $$(\varepsilon_0 q)^h \ \mbox{labelled copies of $H$ in $G$} $$ so that if $x \in V(H)$ lies in $U_i$ in R(t), then $x$ is embedded into $V_i$ in $G$.
\end{lem}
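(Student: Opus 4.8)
The plan is to prove the Key Lemma by a greedy vertex-by-vertex embedding of $H$ into $G$, maintaining shrinking \emph{candidate sets} for the not-yet-embedded vertices. Order the vertices of $H$ as $x_1, \dots, x_h$ arbitrarily, and for each $j$ let $U_{i(j)}$ be the part of $R(t)$ containing $x_j$, so that $x_j$ must be embedded into $V_{i(j)}$. I would construct an embedding $x_s \mapsto y_s$ for $s = 1, \dots, h$ one step at a time. Just before step $s$, for each $j \geq s$ let the candidate set $C_j$ consist of all $v \in V_{i(j)}$ that are adjacent in $G$ to every $y_\ell$ with $\ell < s$ and $x_\ell x_j \in E(H)$ (candidate sets are \emph{not} pruned of already-used vertices; that constraint will be imposed only at the moment $y_s$ is chosen). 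The invariant to maintain is: for every $j \geq s$, $|C_j| \geq \delta^{p_j} q$, where $p_j$ is the number of neighbours of $x_j$ already embedded; in particular $|C_j| \geq \delta^{\Delta} q = (2+\Delta)\varepsilon_0 q$, which is comfortably larger than $\varepsilon q$ since $\varepsilon \leq \varepsilon_0$.

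The engine of a single step is $\varepsilon$-regularity. If $x_j$ (with $j > s$) is a neighbour of $x_s$, then $(V_{i(s)}, V_{i(j)})$ is an $\varepsilon$-regular pair of density at least $d$; since $|C_j| > \varepsilon q = \varepsilon |V_{i(j)}|$, a short counting argument (if too many vertices of $V_{i(s)}$ had few neighbours in $C_j$, they would form a set witnessing density deficit at least $\varepsilon$) shows that all but at most $\varepsilon q$ vertices $v \in V_{i(s)}$ satisfy $d_G(v, C_j) > (d-\varepsilon)|C_j| = \delta|C_j|$; call the exceptional ones \emph{bad for $x_j$}. As $x_s$ has at most $\Delta$ not-yet-embedded neighbours, at most $\Delta \varepsilon q \le \Delta\varepsilon_0 q$ vertices of $V_{i(s)}$ are bad for some neighbour. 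Moreover — this is where the hypothesis $t - 1 \le \varepsilon_0 q$ enters — since $H \subseteq R(t)$, at most $t$ vertices of $H$ lie in $U_{i(s)}$, so at most $t - 1 \le \varepsilon_0 q$ vertices of $V_{i(s)}$ have already been used as images. Hence one can choose $y_s \in C_s$ that is neither already used nor bad for any not-yet-embedded neighbour, and the number of admissible choices is at least $|C_s| - \Delta\varepsilon_0 q - (t-1) \ge (2+\Delta)\varepsilon_0 q - \Delta\varepsilon_0 q - \varepsilon_0 q = \varepsilon_0 q$. Fixing such a $y_s$, we update $C_j \mapsto C_j \cap N_G(y_s)$ for each not-yet-embedded neighbour $x_j$ of $x_s$; goodness of $y_s$ gives $|C_j \cap N_G(y_s)| > \delta|C_j|$, so the invariant is restored with $p_j$ incremented by one, while candidate sets of non-neighbours are unchanged. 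Finally, because $y_s \in C_s \subseteq N_G(y_a)$ for every earlier neighbour $x_a$ of $x_s$, every complete choice sequence $(y_1,\dots,y_h)$ is a genuine labelled copy of $H$ respecting $x \in U_i \Rightarrow y \in V_i$; since at each of the $h$ steps there are at least $\varepsilon_0 q$ choices irrespective of earlier ones, we obtain at least $(\varepsilon_0 q)^h$ distinct labelled copies, as required.

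I expect the only real obstacle to be bookkeeping rather than conceptual: one must phrase the invariant so that the exponent of $\delta$ tracks precisely the number of already-embedded neighbours (never exceeding $\Delta$), and one must be a little careful with strict versus non-strict inequalities in the definition of $\varepsilon$-regularity when bounding the set of bad vertices. The one genuinely substantive observation — easy to miss — is that the relevant count of already-used vertices inside a single cluster is at most $t-1$ (at most $t$ vertices of $H$ can be mapped into any one $V_i$, since $H$ is a subgraph of $R(t)$), not the useless bound $h-1$; it is exactly this that makes the hypothesis $t - 1 \le \varepsilon_0 q$, rather than a bound on $h$, sufficient.
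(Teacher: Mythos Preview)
Your argument is correct and is exactly the standard greedy embedding proof of the Key Lemma. Note, however, that the paper does not give its own proof of this statement: Lemma~\ref{keylem} is quoted from the Koml\'os--Simonovits survey~\cite{KomlosSimonovits} and used as a black box, so there is nothing in the paper to compare your proof against. Your write-up matches the original source's proof in all essentials---the candidate-set invariant $|C_j|\ge\delta^{p_j}q$, the regularity bound on bad vertices, and the use of $t-1\le\varepsilon_0 q$ to control already-used images within a single cluster.
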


Let $G$ be a graph as in Theorem \ref{mainthmsigma} and $R$ a reduced graph of $G$. The next well known lemma essentially says that $R$ `inherits' the degree sequence of $G$.

\begin{lem}[See e.g. \cite{hlt}]\label{inheriteddegseqreduced}
Set $M', n_0 \in \mathbb{N}$ and $\varepsilon, d, \eta, b, \omega, \sigma$ to be positive constants such that $1/n_0 \ll 1/M' \ll \varepsilon \ll d \ll \eta, 1/b$ and where $\omega + \sigma \leq b$. 
Suppose $G$ is a graph on $n \geq n_0$ vertices with degree sequence $d_1 \leq \ldots \leq d_n$ such that
\begin{equation*} 
d_i \geq \frac{b-\omega-\sigma}{b}n + \frac{\sigma}{\omega}i + \eta n \ \ \mbox{for all $1 \leq i \leq \frac{\omega n}{b}$.}
\end{equation*}
Let $R$ be the reduced graph of $G$ with parameters $\varepsilon$, $M'$ and $d$ and set $k:= |R|$. Then $R$ has degree sequence $d_{R,1} \leq \ldots \leq d_{R,k}$ such that
\begin{equation*} 
d_{R,i} \geq \frac{b-\omega-\sigma}{b}k + \frac{\sigma}{\omega}i + \frac{\eta k}{2} \ \ \mbox{for all $1 \leq i \leq \frac{\omega k}{b}$.}
\end{equation*}
\end{lem}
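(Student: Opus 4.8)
\textbf{Proof plan for Lemma~\ref{inheriteddegseqreduced}.}
The plan is to transfer the degree sequence condition from $G$ to its reduced graph $R$ using the standard regularity-lemma bookkeeping: each cluster $V_j$ corresponds to a vertex of $R$, its neighbourhood in $R$ captures (up to small error) the non-exceptional neighbourhood of a typical vertex of $V_j$, and the exceptional set $V_0$ and the $(d+\varepsilon)n$ loss from passing to the pure graph $G'$ are both negligible compared with $\eta n$. First I would fix the output of Lemma~\ref{degformreglemma} applied with $\varepsilon$, $M'$, $d$, obtaining clusters $V_1,\dots,V_k$ of common size $q$, exceptional set $V_0$ with $|V_0|\le\varepsilon n$, and pure graph $G'$; so $(1-\varepsilon)n/k \le q \le n/k$ and $n = kq + |V_0|$.

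The core estimate is to order the clusters so that $d_{R,1}\le\dots\le d_{R,k}$ and, for a given index $i \le \omega k/b$, pick a cluster $V_j$ of reduced-degree $d_{R,i}$ together with a representative vertex $x\in V_j$. Since at most $i-1$ clusters have strictly smaller reduced degree (and clusters of reduced degree equal to $d_{R,i}$ can be ordered arbitrarily), one wants to choose $x$ to be a vertex of $G$ lying in a cluster among the $i$ smallest, of reasonably small $G$-degree; concretely, among the $iq$ vertices in the $i$ smallest clusters there is one, say the $(iq)$-th in the degree order of $G$, with $d_G(x) \le d_{iq}$ — no, more carefully, one argues that some vertex $x$ in these clusters has $d_G(x) \ge d_i$ is the wrong direction, so instead one notes that \emph{every} vertex in the $i$-th smallest cluster $V_j$ satisfies $d_G(x) \le d_{iq}$ is also not automatic; the clean route is: the $i$ smallest clusters contain $iq$ vertices, hence at least one of them, call it $x$, satisfies $d_G(x) \le d_{iq}$ is again backwards. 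The correct observation is that there is a vertex $x$ in one of these $i$ clusters with $d_G(x)$ at most the $(iq)$-th largest... I will instead use the standard trick: pick $x$ in cluster $V_j$ (the cluster realising $d_{R,i}$) with $d_{G'}(x) \ge d_{G'}(V_j, \cdot)$-average, so that $d_{R,i} \cdot q \ge d_{G'}(x) - q \ge d_G(x) - (d+\varepsilon)n - q$, and since $x$ lies outside the $k-i$ largest clusters we have $d_G(x)$ bounded below by applying the hypothesis at an index at least... Here the key point is simply that $x$ can be taken with $d_G(x) \ge d_m$ where $m := (i-1)q + 1 \ge (i-1)(1-\varepsilon)n/k$, because $x$ lies in one of the $i$ smallest clusters only forces an \emph{upper} bound; so actually I choose $x$ in $V_j$ maximising $d_{G'}(x)$, giving $d_{R,i}\, q \ge d_{G'}(x) - q$, and then bound $d_{G'}(x) \ge d_G(x) - (d+\varepsilon)n \ge d_{m} - (d+\varepsilon)n$ where $m$ is the rank of $x$ in $G$; since there are at least $kq - iq +1 \ge (k-i)q$ vertices of $G$-degree at least $d_G(x)$ only when $x$ is chosen minimally — to make this airtight I will instead pick $x$ to be a vertex of rank exactly $(i-1)q+1$ in $G$ that happens to avoid the $\le i-1$ smaller clusters, which is possible by a counting/pigeonhole argument, so $d_G(x) \ge d_{(i-1)q+1}$.

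Granting such an $x$ with $d_G(x) \ge d_{(i-1)q+1}$, I then compute, writing $j' := (i-1)q+1 \le \omega n / b$ (which holds since $i \le \omega k/b$ and $q \le n/k$):
\begin{align*}
d_{R,i}\, q \;\ge\; d_{G'}(x) - q \;&\ge\; d_G(x) - (d+\varepsilon)n - q \\
&\ge\; \frac{b-\omega-\sigma}{b}\,n + \frac{\sigma}{\omega}\big((i-1)q+1\big) + \eta n - (d+\varepsilon)n - q.
\end{align*}
Dividing through by $q$ and using $n/q = k + |V_0|/q \le k + \varepsilon n/q \le k(1+2\varepsilon)$ together with $d,\varepsilon \ll \eta$, all error terms are absorbed into $\eta k/2$, yielding $d_{R,i} \ge \frac{b-\omega-\sigma}{b}k + \frac{\sigma}{\omega}i + \frac{\eta k}{2}$ after noting $\frac{\sigma}{\omega}\cdot\frac{(i-1)q+1}{q} \ge \frac{\sigma}{\omega}(i-1) \ge \frac{\sigma}{\omega}i - \frac{\sigma}{\omega}$ and the constant $\frac{\sigma}{\omega}$ is tiny relative to $\eta k$.

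\textbf{Main obstacle.} The genuine difficulty is the index-matching bookkeeping just sketched: relating the rank $i$ of a cluster in the reduced-graph degree order to a usable rank ($\approx (i-1)q$) in the vertex degree order of $G$, and checking that the resulting index stays within the range $[1, \omega n/b]$ where the hypothesis applies. Everything else — the $|V_0|\le \varepsilon n$ loss, the $(d+\varepsilon)n$ loss from $G'$, the discrepancy between $n/q$ and $k$ — is routine absorption under the hierarchy $1/n_0 \ll 1/M' \ll \varepsilon \ll d \ll \eta, 1/b$. Since this is a known lemma (cited as appearing in~\cite{hlt}), I would either reproduce this short argument or simply cite it.
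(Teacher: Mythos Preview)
The paper does not supply a proof of this lemma; it simply cites~\cite{hlt}. Your overall approach is the standard one and is correct in outline, but the index-matching step you flag as the main obstacle is indeed where your write-up goes wrong. Your final choice, ``pick $x$ of rank exactly $(i-1)q+1$ in $G$ that happens to avoid the $\le i-1$ smaller clusters'', is backwards: such an $x$ can land in $V_0$ or in a cluster of $R$-degree strictly larger than $d_{R,i}$, and then the bound $d_{R,i}\,q \ge d_{G'}(x) - q$ that you use immediately afterwards simply fails (that inequality needs $x$ to sit in a cluster whose $R$-degree is at most $d_{R,i}$).

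The fix is to look \emph{inside} the $i$ smallest clusters rather than outside the $i-1$ smallest. Every vertex $x$ lying in one of the $i$ clusters of smallest $R$-degree satisfies $d_{G'}(x) \le |V_0| + d_{R,i}\,q$, hence $d_G(x) \le |V_0| + d_{R,i}\,q + (d+\varepsilon)n =: M$. There are $iq$ such vertices, so $d_{iq} \le M$; and since $iq \le \omega kq/b \le \omega n/b$, the hypothesis at index $iq$ gives
\[
\frac{b-\omega-\sigma}{b}\,n + \frac{\sigma}{\omega}\,iq + \eta n \;\le\; d_{iq} \;\le\; \varepsilon n + d_{R,i}\,q + (d+\varepsilon)n.
\]
Dividing by $q$ and using $n/q \ge k$ together with $\varepsilon, d \ll \eta$ yields the claimed bound on $d_{R,i}$. (Equivalently: among those $iq$ vertices there is one $x$ with $d_G(x) \ge d_{iq}$, and this particular $x$ enjoys both the lower bound from the hypothesis and the upper bound coming from its cluster's small $R$-degree.) The rest of your computation is fine once this step is repaired.
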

Let $G$ and $H$ be graphs and $R$ be a reduced graph of $G$. Let $\mathcal{H}$ be a perfect $H$-tiling in $R$. The following result ensures that after removing only a few vertices from each cluster in $R$ every edge in each copy of $H \in \mathcal{H}$ corresponds to a superregular pair. This will be essential to apply Lemma \ref{blowup} in Section~\ref{sectblowup}.  


\begin{prop}[See e.g. \cite{kotaraz}]\label{superprop} Let $G$ be a graph, $\varepsilon, d \in (0,1)$ and $M',\Delta \in \mathbb{N}$. Apply Lemma~\ref{degformreglemma} to $G$ with parameters $\varepsilon, M'$ and $d$ to obtain a reduced graph $R$ with clusters of size $q$. Let $H$ be a subgraph of the reduced graph $R$ with $\Delta(H) \leq \Delta$ and label the vertices of $H$ as $V_1, \ldots, V_{|H|}$. Then each vertex $V_i$ of $H$ contains a subset $V_i'$ of size $(1 - \varepsilon \Delta)q$ such that for every edge $V_iV_j$ of $H$ the graph $(V_i',V_j')_{G'}$ is $(\varepsilon/(1 - \varepsilon\Delta),d - (1+\Delta)\varepsilon)$-superregular.
\end{prop}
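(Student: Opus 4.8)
The plan is to run the standard ``cleaning'' argument that turns a regular pair into a superregular one, with the only additional input being that the bounded degree of $H$ controls how many vertices we must delete from each cluster. First I would record the elementary fact about regular pairs on which everything rests: if $(A,B)_{G'}$ is an $\varepsilon$-regular pair of density $d^\ast \geq d$, then all but at most $\varepsilon|A|$ vertices $a\in A$ satisfy $d_{G'}(a,B)\geq (d^\ast-\varepsilon)|B|\geq (d-\varepsilon)|B|$. Indeed, if the set $A'$ of vertices violating this had size more than $\varepsilon|A|$, then $d_{G'}(A',B)<d^\ast-\varepsilon$, while testing $\varepsilon$-regularity with $Y=B$ forces $|d_{G'}(A',B)-d^\ast|<\varepsilon$, a contradiction; the statement for $B$ is symmetric.

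Next I would carry out the deletion. Let $V_1,\dots,V_{|H|}$ (each of size $q$) be the clusters corresponding to the vertices of $H$. For each edge $V_iV_j\in E(H)$, the pair $(V_i,V_j)_{G'}$ is $\varepsilon$-regular of density at least $d$ by Lemma~\ref{degformreglemma}, so by the fact above the set $B_{i,j}\subseteq V_i$ of vertices with fewer than $(d-\varepsilon)q$ neighbours in $V_j$ within $G'$ has $|B_{i,j}|\leq\varepsilon q$. Since $\Delta(H)\leq\Delta$, the vertex $V_i$ is incident to at most $\Delta$ edges of $H$, so $B_i:=\bigcup_{j:\,V_iV_j\in E(H)}B_{i,j}$ satisfies $|B_i|\leq\varepsilon\Delta q$. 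Choose $V_i'\subseteq V_i\setminus B_i$ with $|V_i'|=(1-\varepsilon\Delta)q$ exactly (discarding a few further vertices if $|B_i|<\varepsilon\Delta q$, and rounding $\varepsilon\Delta q$ if it is not an integer); in particular $|V_i\setminus V_i'|\leq\varepsilon\Delta q$.

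Finally I would verify that for every edge $V_iV_j\in E(H)$ the pair $(V_i',V_j')_{G'}$ is $(\varepsilon/(1-\varepsilon\Delta),\,d-(1+\Delta)\varepsilon)$-superregular, checking the two clauses of the definition. For the density clause: any $X\subseteq V_i'$ with $|X|\geq\frac{\varepsilon}{1-\varepsilon\Delta}|V_i'|=\varepsilon q=\varepsilon|V_i|$ and any $Y\subseteq V_j'$ with $|Y|\geq\varepsilon|V_j|$ satisfy, by $\varepsilon$-regularity of $(V_i,V_j)_{G'}$, that $d_{G'}(X,Y)>d_{G'}(V_i,V_j)-\varepsilon\geq d-\varepsilon\geq d-(1+\Delta)\varepsilon$. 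For the minimum-degree clause: each $a\in V_i'$ lies outside $B_{i,j}$, so $d_{G'}(a,V_j')\geq (d-\varepsilon)q-|V_j\setminus V_j'|\geq (d-(1+\Delta)\varepsilon)q>(d-(1+\Delta)\varepsilon)(1-\varepsilon\Delta)q=(d-(1+\Delta)\varepsilon)|V_j'|$, where the strict inequality uses $d-(1+\Delta)\varepsilon>0$ (valid since $\varepsilon\ll d$); the bound for $b\in V_j'$ is identical. This gives the proposition. The argument is essentially bookkeeping: the only points needing a line of care are the integrality of $(1-\varepsilon\Delta)q$ and the handling of strict versus non-strict size thresholds when invoking $\varepsilon$-regularity, and neither of these is a genuine obstacle.
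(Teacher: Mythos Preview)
The paper does not actually prove this proposition; it is stated with a citation to~\cite{kotaraz} and used as a black box. Your argument is the standard ``cleaning'' proof of this well-known fact and is correct, including your acknowledgement of the two cosmetic issues (integrality of $(1-\varepsilon\Delta)q$ and the strict-versus-nonstrict threshold when passing from $\varepsilon$-regularity to the superregularity density clause), neither of which causes any genuine difficulty.
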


The following fundamental result of Koml\'{o}s, S\'{a}rk\"{o}zy and Szemer\'{e}di~\cite{kssblowup}, known as the \emph{Blow-up lemma}, essentially says that $(\varepsilon, d)$-superregular pairs behave like complete bipartite graphs with respect to containing bounded degree subgraphs.

\begin{lem}[Blow-up lemma \cite{kssblowup})] \label{blowup} Given a graph $F$ on vertices $\{1,\ldots, f\}$ and $d, \Delta > 0$, there exists an $\varepsilon_0 = \varepsilon_0(d,\Delta,f) > 0$ such that the following holds.
Given $L_1, \ldots , L_f \in \mathbb{N}$ and $\varepsilon \leq \varepsilon_0$, let $F^{*}$ be the graph obtained from $F$ by replacing each vertex $i \in F$ with a set $V_i$ of $L_i$ new vertices and joining all vertices in $V_i$ to all vertices in $V_j$ whenever $ij$ is an edge of $F$. 
Let $G$ be a spanning subgraph of $F^{*}$ such that for every edge $ij \in F$ the pair $(V_i,V_j )_G$ is $(\varepsilon, d)$-superregular. Then $G$ contains a copy of every subgraph $H$ of $F^{*}$ with $\Delta(H) \leq \Delta$.
\end{lem}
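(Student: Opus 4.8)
The plan is to prove the Blow-up Lemma by the randomized greedy embedding method of Koml\'os, S\'ark\"ozy and Szemer\'edi: build the desired copy of $H$ one vertex at a time while maintaining large ``candidate sets'', and handle the notoriously difficult last vertices of each class via a reserved buffer and a matching argument. First I would reduce to the case that $H$ is spanning in $F^{*}$, i.e.\ $|V(H)\cap V_i|=L_i$ for every $i$; isolated vertices can be appended to $H$ and embedded last into whatever host vertices remain, so this is harmless. The goal then is to construct a bijection $\phi:V(H)\to V(G)$ with $\phi(V(H)\cap V_i)=V_i$ for each $i$ and $\phi(u)\phi(v)\in E(G)$ whenever $uv\in E(H)$. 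Throughout the construction, for each not-yet-embedded vertex $v\in V(H)\cap V_i$ I track its \emph{candidate set} $C(v)\subseteq V_i$: the set of host vertices $w\in V_i$ not yet used by $\phi$ with $w\in N_G(\phi(u))$ for every already-embedded $H$-neighbour $u$ of $v$. The entire difficulty is to keep every $C(v)$ of linear size until $v$ is embedded. Since $(V_i,V_j)_G$ is $(\varepsilon,d)$-superregular for each edge $ij$ of $F$, fixing $\phi$ on one $H$-neighbour of $v$ typically shrinks $C(v)$ by a factor close to $d$ with an $\varepsilon$-type error, \emph{provided} $C(v)$ is still linear --- this is exactly where $\varepsilon_0$ must be small relative to $d$, $1/\Delta$, $1/f$, as a candidate set can be cut down up to $\Delta$ times across the $f$ classes.

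A purely greedy embedding fails because the last vertices of each class may end up with empty candidate sets. To avoid this, in each class $V_i$ I would select, by independent random sampling followed by a small clean-up, a \emph{buffer} set $B_i\subseteq V(H)\cap V_i$ with $|B_i|\ge\rho L_i$ for a suitable constant $\rho=\rho(d,\Delta)$, with the key property that no $H$-neighbour of a buffer vertex is itself a buffer vertex. (Each vertex of $H$ has at most $\Delta$ neighbours, so discarding the rare vertices whose neighbourhood meets $B:=\bigcup_i B_i$ costs only an $O(\Delta\rho^{2})$-fraction; and $B_i$ is automatically independent in $H$ since $F^{*}$ has no edges inside a class.) The embedding then runs in two phases: Phase~1 embeds all non-buffer vertices in a random order, and Phase~2 embeds the buffer vertices. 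Because every $H$-neighbour of a buffer vertex is non-buffer, at the start of Phase~2 every buffer vertex $v$ has all of its $H$-neighbours already embedded, so $C(v)$ is fixed at that moment and can shrink further only because of other buffer vertices in the same class. By counting, exactly $|B_i|$ vertices of $V_i$ remain unused when Phase~2 begins.

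The heart of the argument is the probabilistic control of Phase~1. At each step $\phi(v)$ is chosen uniformly at random from $C(v)$ minus a negligible forbidden set, and I would show that with positive probability the random choices achieve two good outcomes simultaneously: (a) at every moment every still-unembedded vertex $v$ satisfies $|C(v)|\ge \delta^{t(v)}\tfrac12|V_{i(v)}|$, where $\delta:=d-\varepsilon$ and $t(v)$ is the number of already-embedded $H$-neighbours of $v$; and (b) at the end of Phase~1, for each $i$ the bipartite ``candidate graph'' $\Gamma_i$ between $B_i$ and the $|B_i|$ unused vertices of $V_i$ (joining $v\in B_i$ to $w$ iff $w\in C(v)$) is $(\varepsilon',d/2)$-superregular for a suitable $\varepsilon'$. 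For (a), each step either leaves a given $C(v)$ unchanged or intersects it with a neighbourhood across a superregular pair; $\varepsilon$-regularity gives the expected multiplicative factor $\approx d$, and a martingale/Azuma concentration estimate --- affordable against a union bound since there are only linearly many vertices and steps --- shows $|C(v)|$ never drops below the target. For (b), one argues in the same spirit that restricting the superregular pairs to the residual sets preserves superregularity after adjusting parameters, exactly as in Fact~\ref{slicinglemma}, and that the random choices do not systematically bias any residual pair.

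Given (b), Phase~2 is quick: for each $i$ the graph $\Gamma_i$ is a balanced bipartite graph on parts of size $|B_i|$, $\varepsilon'$-regular of density at least $d/2$ and with minimum degree at least $(d/2)|B_i|$; such a graph satisfies Hall's condition once $\varepsilon'\ll d$ (a violating set would contradict $\varepsilon'$-regularity), so it has a perfect matching $M_i$. Defining $\phi$ on $B_i$ via $M_i$ for every $i$ completes $\phi$ to a bijection, and all edges of $H$ are respected: edges with both endpoints non-buffer were handled in Phase~1, and an edge with a buffer endpoint is respected because that endpoint was matched into its (by then fixed) candidate set, while distinct matchings $M_i$ cannot interfere since buffer vertices have no buffer neighbours. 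The main obstacle is Phase~1: formulating the precise quantitative invariant on the candidate sets and the residual superregular structure, and verifying it survives with positive probability --- one must balance $\rho$, $\varepsilon'$ and the target lower bounds on $|C(v)|$ so that concentration beats the union bound over all linearly many steps \emph{and} the surviving graphs $\Gamma_i$ are still superregular enough for Hall's condition. The reduction to spanning $H$, the buffer construction, and the final matching step are comparatively routine.
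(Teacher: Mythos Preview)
The paper does not prove this statement: Lemma~\ref{blowup} is the Blow-up lemma of Koml\'os, S\'ark\"ozy and Szemer\'edi, quoted from~\cite{kssblowup} and used as a black box in Section~\ref{sectblowup}. Your sketch is a faithful outline of the architecture of the original proof --- randomized greedy embedding of the bulk of~$H$, with a reserved buffer in each class completed afterwards by Hall-type matchings --- and nothing in it is wrong in spirit. Two substantial technical ingredients that the actual argument needs and that you elide: (i) Phase~1 does not run in a purely random order; one maintains a \emph{queue} and promotes to the front any non-buffer vertex whose candidate set has shrunk below its target, and a central lemma bounds the total number of such promotions so that the process does not stall; (ii) the buffer condition in~\cite{kssblowup} is stronger than mere independence --- buffer vertices are chosen pairwise far apart in~$H$ --- which is what allows the candidate-set analysis for distinct buffer vertices, and the proof that each~$\Gamma_i$ inherits superregularity, to decouple. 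With those refinements your outline becomes the original proof, but be aware that carrying out the concentration estimates and establishing the residual superregularity of the~$\Gamma_i$ rigorously is the bulk of a long and delicate paper.
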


\subsection{Tilings in complete graphs}
In \cite{kuhn}, the following result of K\"{u}hn and Osthus is essential to their proof of Theorem \ref{kothm}.

\begin{lem}\cite[Lemma 12]{kuhn2} \label{kolem}
Let $H$ be a graph with $\chi(H) =: r \geq 2$ and $\textnormal{hcf}(H) = 1$. Let $h := |H|$ and $\omega(H) := \left(h - \sigma(H)\right)/(r-1)$. Let $0 < \beta_1 \ll \lambda_1 \ll \sigma(H)/\omega(H)$, $1 - \sigma(H)/\omega(H)$, $1/h$ be positive constants. 
Suppose that $F$ is a complete $r$-partite graph with vertex classes $U_1, \ldots , U_{r}$ such that: $0 < 1/|F| \ll \beta_1$;
\COMMENT{AT: deleted some terms that were not needed}
 $|F|$ is divisible by $h$; $(1~-~\lambda_1^{1/10})|U_{r}|~\leq \sigma(H)|U_i|/\omega(H)~\leq~(1~-~\lambda_1)|U_{r}|$ for all $i < r$; $| |U_i | - |U_j | | \leq \beta_1|F|$ whenever $1 \leq i < j < r$. Then $F$ contains a perfect $H$-tiling.
\end{lem}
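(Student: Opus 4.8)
The plan is to prove Lemma~\ref{kolem} by reducing the tiling problem for a general graph $H$ with $\mathrm{hcf}(H)=1$ to the ``bottle graph'' case of Extremal Example~2, exploiting the freedom we have in choosing the colour class sizes in each copy of $H$. Recall that each copy of $H$ we place in $F$ will occupy one vertex from the class structure according to some optimal colouring of $H$; since $H$ may have several optimal colourings with different colour class profiles, we have a palette of ``profile vectors'' $(x_{c,1},\dots,x_{c,r})$ to draw from. The key constraint is that $|F|$ is divisible by $h$ and that the classes $U_1,\dots,U_r$ have the slightly unbalanced sizes described in the hypothesis (the class $U_r$ being the largest, playing the role of the ``wide'' classes, and the smaller ones $U_i$ being close to the ``neck'' proportion $\sigma(H)|U_i|/\omega(H)$). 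We need to partition $V(F)$ into groups, each spanning a copy of $H$, so that the total usage of each $U_i$ exactly matches $|U_i|$.

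First I would set up the counting problem precisely: we want to choose nonnegative integers $n_c$ (the number of copies of $H$ using optimal colouring $c$, up to permutation of which colour class maps to which $U_i$) together with an assignment of colour classes to the $U_i$, so that for each $i$ the sum of the assigned class sizes equals $|U_i|$, and $\sum_c n_c h = |F|$. The natural first move is to handle the ``bulk'' of $F$ with balanced-type copies: place roughly $|F|/h$ copies of $H$ using a single optimal colouring, distributing colour classes as evenly as possible among $U_1,\dots,U_r$. This will leave a small discrepancy in each $U_i$ (of size $O(\beta_1|F|)$ for $i<r$, by the near-balance hypothesis $||U_i|-|U_j||\le\beta_1|F|$, plus a larger but controlled gap between the $U_i$ with $i<r$ and $U_r$ coming from the ``neck vs.\ width'' imbalance). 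The hypothesis $(1-\lambda_1^{1/10})|U_r| \le \sigma(H)|U_i|/\omega(H) \le (1-\lambda_1)|U_r|$ is exactly what guarantees this leftover imbalance points in the right direction and is of the right order of magnitude to be correctable.

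The heart of the argument is then a number-theoretic adjustment step: using that $\mathrm{hcf}_\chi(H)=1$ (when $r\ge 3$, or the bipartite version of the $\mathrm{hcf}$ condition when $r=2$), the set $\mathcal{D}(H)$ of differences of colour class sizes has gcd $1$, so by swapping a bounded number of copies of $H$ from one optimal colouring to another (or permuting which colour class goes to which $U_i$), we can shift the amount of $U_i$ used, relative to $U_j$, by any integer — in particular by exactly the discrepancy we need. Concretely, each such swap changes the net usage vector by a vector whose entries sum to zero and which, ranging over all available swaps, spans (over $\mathbb{Z}$) the full sublattice $\{v\in\mathbb{Z}^r : \sum v_i = 0\}$ precisely because $\mathrm{hcf}_\chi(H)=1$; and we also need $\mathrm{hcf}_c(H)=1$ in the bipartite case to fix up the total count modulo small factors. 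Since $|F| \gg 1/\beta_1$, there is enough room to perform $O(\beta_1 |F|)$ such local swaps without running out of copies, and each swap only disturbs $O(1)$ copies of $H$, so the process terminates with an exact perfect $H$-tiling.

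The main obstacle I expect is making the lattice/gcd argument fully rigorous and uniform: one must check that the ``elementary swaps'' available (changing one copy of $H$ from colouring $c$ to colouring $c'$, or relabelling colour classes among the $U_i$) really do generate all of $\{v\in\mathbb{Z}^r:\sum v_i=0\}$ as a $\mathbb{Z}$-module, and that a target discrepancy vector of magnitude $O(\beta_1|F|)$ can be reached using only $O(\beta_1|F|)$ swaps while keeping every $n_c$ nonnegative throughout (so that the intermediate ``tilings'' remain realizable). This is where the hypothesis $\beta_1 \ll \lambda_1 \ll \sigma(H)/\omega(H), 1-\sigma(H)/\omega(H), 1/h$ is used: $\lambda_1$ bounds the relative imbalance we must correct, $\beta_1 \ll \lambda_1$ ensures the within-$\{U_i : i<r\}$ discrepancies are an even lower-order correction, and $1/h$ small means the bounded swap sizes are genuinely bounded relative to everything. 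A secondary technical point is the divisibility of $|F|$ by $h$, which must be used to ensure the initial bulk assignment leaves a leftover that is itself ``tileable'' after correction, i.e.\ the total number of $H$-copies comes out to exactly $|F|/h$; since $\mathrm{hcf}_c(H)=1$ this too can be arranged. I would organize the write-up as: (1) reduce to finding the copy-count vector; (2) bulk assignment; (3) the lattice generation lemma from $\mathrm{hcf}(H)=1$; (4) the bounded correction procedure; (5) verification that all constraints hold.
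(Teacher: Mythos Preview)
The paper does not prove Lemma~\ref{kolem} at all: it is quoted verbatim as \cite[Lemma~12]{kuhn2} and used as a black box (the only related argument in the paper is the short reduction of Lemma~\ref{generalkolem} to Lemma~\ref{kolem}). So there is no ``paper's own proof'' to compare your attempt against; you are sketching a proof of a result the authors simply import.

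As a sketch of how one would prove the K\"uhn--Osthus lemma, your outline is in the right spirit --- reduce to an integer feasibility problem for the vector of class usages, solve it approximately with a bulk assignment, and then correct via the lattice generated by colour-class differences, which is all of $\{v\in\mathbb{Z}^r:\sum v_i=0\}$ precisely because $\mathrm{hcf}(H)=1$. Two points would need tightening before this becomes a proof. First, your ``bulk'' step is vague about what the leftover actually looks like: if you tile with a single colouring and spread classes evenly, the residual imbalance between $U_r$ and the $U_i$ ($i<r$) is of order $\lambda_1|F|$, not $\beta_1|F|$, so you must explain why $O(\lambda_1|F|)$ swaps can be carried out while keeping all multiplicities nonnegative (this is where $\lambda_1\ll\sigma(H)/\omega(H),1/h$ is really used, and the two-sided bound $(1-\lambda_1^{1/10})|U_r|\le\sigma(H)|U_i|/\omega(H)\le(1-\lambda_1)|U_r|$ is what gives slack on both sides). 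Second, in the bipartite case $\mathrm{hcf}_\chi(H)\le 2$ rather than $=1$, so the difference lattice need not be all of $\{v:\sum v_i=0\}$; the condition $\mathrm{hcf}_c(H)=1$ is then used not merely ``to fix up the total count modulo small factors'' but as the genuine source of flexibility, via placing different components of $H$ in different classes. Your plan acknowledges both issues but does not resolve them; for the purposes of this paper, citing \cite{kuhn2} is the correct move.
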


We will use the Blow-up lemma in tandem with the following generalisation of Lemma \ref{kolem} to conclude that a particular tiling that we construct in a reduced graph $R$ guarantees a perfect $H$-tiling in our original graph $G$.

\begin{lem} \label{generalkolem}
Let $H$ be a graph with $\chi(H) =: r \geq 2$ and $\textnormal{hcf}(H) = 1$.  Let $h := |H|$. Set $\sigma \in \mathbb{R}$ such that $\sigma(H) \leq \sigma < h/r$ and $\omega := \left(h - \sigma\right)/(r-1)$. Let $0 < \beta_2 \ll \lambda_2 \ll \sigma/\omega$, $1 - \sigma/\omega$, $1/h$ be positive constants. 
Suppose that $F$ is a complete $r$-partite graph with vertex classes $U_1, \ldots , U_{r}$ such that: $0 < 1/|F| \ll \beta_2$; 
$|F|$ is divisible by $h$; $(1 - \lambda_2^{1/10})|U_{r}| \leq \sigma|U_i|/\omega \leq (1 - \lambda_2)|U_{r}|$ for all $i < r$; $| |U_i | - |U_j | | \leq \beta_2|F|$ whenever $1 \leq i < j < r$. Then $F$ contains a perfect $H$-tiling.
\end{lem}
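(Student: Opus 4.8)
The plan is to deduce Lemma~\ref{generalkolem} from Lemma~\ref{kolem} by observing that although we are now allowing an arbitrary real $\sigma \in [\sigma(H), h/r)$ rather than $\sigma(H)$ itself, the copies of $H$ we will actually pack are still balanced $\chi(H)$-colourings of $H$ with smallest class of size $\sigma(H)$; the extra freedom in $\sigma$ only changes the \emph{shape} of the target $r$-partite graph $F$, not the building block. Concretely, write $\sigma_0 := \sigma(H)$ and $\omega_0 := \omega(H) = (h-\sigma_0)/(r-1)$, so that $\sigma_0/\omega_0 \leq \sigma/\omega < 1$ (the inequality $\sigma/\omega < 1$ being exactly $\sigma < h/r$; and $\sigma/\omega$ is increasing in $\sigma$ on this range, which gives $\sigma_0/\omega_0 \le \sigma/\omega$). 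The idea is to split each vertex class $U_i$ of $F$ into two pieces: a part that will be tiled using balanced copies of $H$ (whose colour classes have sizes forming a multiset with minimum $\sigma_0$ and the other $r-1$ classes summing appropriately), absorbing the `excess' caused by $\sigma > \sigma_0$, and a leftover part that is genuinely of bottle-type with neck-to-width ratio $\sigma_0/\omega_0$, to which Lemma~\ref{kolem} applies directly.

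More precisely, first I would fix a single optimal colouring of $H$ realising $\sigma(H)$, giving colour class sizes $a_1 = \sigma_0 \le a_2 \le \dots \le a_r$ with $\sum a_j = h$. The first step is to peel off, from $F$, a perfect $H$-tiling of an induced complete $r$-partite subgraph $F_1 \subseteq F$ chosen so that $F \setminus V(F_1) =: F_2$ is again complete $r$-partite, is divisible by $h$, and has class sizes $|U_i'|$ satisfying the hypotheses of Lemma~\ref{kolem} with parameters $\beta_1, \lambda_1$ (chosen with $\beta_2 \ll \beta_1 \ll \lambda_1 \ll \lambda_2$, and $\lambda_1 \ll \sigma_0/\omega_0, 1-\sigma_0/\omega_0, 1/h$). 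Since $F_1$ is complete $r$-partite, it has a perfect $H$-tiling as soon as its class sizes are a common multiple of $(a_1,\dots,a_r)$ up to the usual slack — here we may even just take $F_1$ to be a disjoint union of copies of $H$ placed greedily, removing $a_j$ vertices from class $j$ (in some assignment of colours to classes) at a time. The bookkeeping is: starting from $|U_i|$ with $(1-\lambda_2^{1/10})|U_r| \le \sigma|U_i|/\omega \le (1-\lambda_2)|U_r|$ and near-equal $|U_1|,\dots,|U_{r-1}|$, we must reach target sizes with ratio $\sigma_0/\omega_0$ instead of $\sigma/\omega$. Because $\sigma_0/\omega_0 \le \sigma/\omega$, the `small' class $U_r$ is proportionally too small relative to the others by at most an $O(\lambda_2)$ fraction, so removing $O(\lambda_2 |F|)$ vertices from $U_1,\dots,U_{r-1}$ (distributed evenly) in whole copies of $H$ suffices to correct the ratio while keeping the other conditions; the divisibility of $|F_2|$ by $h$ is maintained by construction, and near-equality of the first $r-1$ classes is preserved since we remove equally from each up to $O(1)$.

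Then the second step is simply to invoke Lemma~\ref{kolem} on $F_2$ with the parameters $\beta_1, \lambda_1$: its hypotheses $(1-\lambda_1^{1/10})|U_r'| \le \sigma_0|U_i'|/\omega_0 \le (1-\lambda_1)|U_r'|$ and $||U_i'|-|U_j'|| \le \beta_1|F_2|$ hold by the choices above, and $1/|F_2| \ll \beta_1$ since $|F_2| \ge |F|/2$ say and $1/|F| \ll \beta_2 \ll \beta_1$. This yields a perfect $H$-tiling of $F_2$, which together with the tiling of $F_1$ gives a perfect $H$-tiling of $F$.

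The main obstacle I anticipate is the arithmetic of choosing $F_1$ precisely: one has to exhibit an explicit (integer) way of removing vertices, in multiples of the colour-class-size vector of some optimal colouring of $H$, so that \emph{simultaneously} the remaining ratio hits the window $[(1-\lambda_1^{1/10}), (1-\lambda_1)]$ around $\sigma_0|U_i'|/\omega_0 = |U_r'|$, the first $r-1$ classes stay within $\beta_1|F_2|$ of each other, and $h \mid |F_2|$. This is where the hierarchy $\beta_2 \ll \beta_1 \ll \lambda_1 \ll \lambda_2$ is used: the `error' we are correcting is of order $\lambda_2|F|$, the slack we must land inside is of order $\lambda_1|F|$ which is much smaller, so one has to be slightly careful that the greedy removal does not overshoot — but since individual copies of $H$ have bounded size $h$, the overshoot is only $O(h)$, which is absorbed comfortably within $\beta_1|F|$ and within the $\lambda_1$-window because $|F|$ is large ($1/|F| \ll \beta_2 \ll \lambda_1$). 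Beyond this finite-arithmetic check, the argument is essentially a reduction and should be routine.
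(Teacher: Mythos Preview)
Your overall strategy --- peel off an $H$-tileable piece from $F$ so that the remainder satisfies the hypotheses of Lemma~\ref{kolem}, then apply that lemma --- is exactly the paper's approach. However, your bookkeeping for the peeling step contains a genuine error.

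The direction of the correction is reversed. Since $\sigma_0/\omega_0 \le \sigma/\omega$, the initial quantity $\sigma_0|U_i|/(\omega_0|U_r|)$ lies \emph{below} the target window $[1-\lambda_1^{1/10},\,1-\lambda_1]$: one needs $|U_i|/|U_r|$ to increase from roughly $\omega/\sigma$ to the larger value $\omega_0/\sigma_0$. So $U_r$ is proportionally too \emph{large}, not too small, and removing vertices only from $U_1,\dots,U_{r-1}$ as you propose moves the ratio the wrong way. Relatedly, the magnitude of the correction is not $O(\lambda_2|F|)$: the gap between $\sigma/\omega$ and $\sigma_0/\omega_0$ is a positive constant depending only on $\sigma$ and $H$ (not on $\lambda_2$), so a fixed constant proportion of $F$ must be removed.

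The paper handles both points with one clean device: instead of removing single copies of $H$ with an unbalanced colouring, it removes copies of $H^*$, the complete \emph{balanced} $r$-partite graph on $rh$ vertices (which has a perfect $H$-tiling using $r$ copies of $H$). Each such removal takes exactly $h$ vertices from every class, so the differences $||U_i|-|U_j||$ for $i,j<r$ are preserved exactly, and --- since $U_r$ is the smallest class --- equal removal drives $|U_i|/|U_r|$ monotonically upward until a (slightly narrowed) target window is reached. The paper also inverts your hierarchy to $\beta_2 \ll \beta_1 \ll \lambda_2 \ll \lambda_1$: with $\lambda_1$ fixed first by Lemma~\ref{kolem} and then $\lambda_2$ chosen small relative to both $\lambda_1$ and $\sigma/\omega-\sigma_0/\omega_0$, the target window is wide, the single-step overshoot is negligible, and the $\beta_2|F|$-noise among $U_1,\dots,U_{r-1}$ is absorbed into $\beta_1|F'|$ even after a constant fraction of $F$ has been removed.
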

\proof
Note we may assume that $\sigma >\sigma (H)$ as otherwise the result follows immediately from Lemma~\ref{kolem}.
We choose $\beta _2 \ll \beta _1 \ll \lambda _2 \ll \lambda _1$ where $\beta _1$ and $\lambda _1$ are as in Lemma~\ref{kolem}.
Additionally we may assume $\beta _2, \lambda _2 \ll (\sigma/\omega-  \sigma(H)/\omega(H))$.

Let $F$ be as in the statement of the lemma.
Set $H^*$ to be the complete balanced $r$-partite graph on $rh$ vertices (that is, each vertex class of $H^*$ has size $h$). Observe that $H^*$ has a perfect $H$-tiling using precisely $r$ copies of $H$.

Repeatedly delete disjoint copies of $H^*$ from $F$ (and therefore update the classes $U_1,\dots, U_r$) until the first point for which there is some $i<r$ such that 
$(1-\lambda_1^{1/10}/2)|U_{r}|\leq \sigma(H)|U_i|/\omega(H)\leq(1-2\lambda_1)|U_{r}|$. 
Call the resulting graph $F'$. Note that $\sigma /\omega > \sigma (H)/\omega (H)$, so we can indeed obtain $F'$. Further note that the choice of $\beta _2$ ensures each
class $U_j$ still contains at least a $\beta _2 ^{1/2}$-proportion of the vertices it started with.
So now  $| |U_i | - |U_j | | \leq \beta_2|F| \leq \beta ^{1/2}_2|F'|  \leq \beta _1 |F'|$ whenever $1 \leq i < j < r$.
Moreover, this implies 
$(1-\lambda_1^{1/10})|U_{r}|\leq \sigma(H)|U_j|/\omega(H)\leq(1-\lambda_1)|U_{r}|$ for all $j <r$.
Thus, by Lemma~\ref{kolem}, $F'$ contains a perfect $H$-tiling and therefore, so too does $F$, as desired.
\endproof
\COMMENT{AT: new short proof}

\subsection{A degree sequence Koml\'{o}s theorem}
 In \cite{kuhn}, K\"{u}hn and Osthus begin their proof of Theorem~\ref{kothm} by applying Koml\'{o}s' theorem (Theorem~\ref{komcor}). 
In our proof of Theorem~\ref{mainthmsigma} we will use the following degree sequence version of Koml\'{o}s' theorem that the authors and Liu proved in \cite{hlt}. 

\begin{thm} \cite[Theorem 8.1]{hlt} \label{generalalmostmain}
Let $\eta > 0$ and $H$ be a graph with $\chi(H) = r$ and $h := |H|$. Set $\sigma \in \mathbb{R}$ such that $\sigma(H) \leq \sigma \leq h/r$ and $\omega := \left(h - \sigma\right)/(r-1)$. Then there exists an integer $n_0 = n_0(\eta, \sigma, H) \in \mathbb{N}$ such that the following holds. Suppose $G$ is a graph on $n\geq n_0$ vertices with degree sequence $d_1 \leq \ldots \leq d_n$ such that

\[d_i \geq \left(1 - \frac{\omega+\sigma}{h}\right)n + \frac{\sigma}{\omega}i \ \ \mbox{for all \ $1 \leq i \leq \frac{\omega n}{h}$.}\]
Then $G$ contains an $H$-tiling covering all but at most $\eta n$ vertices.
\end{thm}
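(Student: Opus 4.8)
The plan is to run the regularity-method proof of Koml\'os' theorem (Theorem~\ref{komcor}), carrying the degree-sequence hypothesis through each stage. Write $\Delta:=\Delta(H)$. First I would fix constants $1/n_0\ll 1/M'\ll\varepsilon\ll d\ll\eta,1/h$ and apply the degree form of Szemer\'edi's regularity lemma (Lemma~\ref{degformreglemma}) to $G$ with parameters $\varepsilon,M',d$, obtaining clusters $V_1,\dots,V_k$ of common size $q$, an exceptional set $V_0$ with $|V_0|\le\varepsilon n$, the pure graph $G'$ and the reduced graph $R$ on $k$ vertices. Repeating the computation behind Lemma~\ref{inheriteddegseqreduced} (here without any $\eta n$ slack in the hypothesis, but allowing an additive loss of order $\sqrt{\varepsilon}\,k$), $R$ inherits the degree sequence in the form $d_{R,i}\ge(1-(\omega+\sigma)/h)k+(\sigma/\omega)i-\sqrt{\varepsilon}\,k$ for all $1\le i\le\omega k/h$; in particular the top $(1-\omega/h)k$ clusters of $R$ have degree at least $(1-\omega/h-\sqrt{\varepsilon})k$.

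The heart of the argument is a purely combinatorial statement about $R$: this degree sequence forces an almost-spanning tiling of a cluster-split of $R$ by ``bottle-shaped'' configurations, i.e.\ complete $r$-partite patterns with one part of relative size $\sigma/h$ and $r-1$ parts of relative size $\omega/h$ (with part sizes the appropriate integer roundings, since $\sigma$ need not be an integer; when $\sigma=\omega=h/r$ these are balanced and one is in the regime of the degree-sequence Hajnal--Szemer\'edi/Alon--Yuster theorem of~\cite{Tregs}). Concretely, using that $q$ is large I would split each cluster into a bounded number of pieces and look for vertex-disjoint such configurations, each formed from pieces lying in distinct clusters that pairwise span $\varepsilon$-regular pairs of density $\ge d$ in $G'$, covering all but at most a $\gamma$-fraction of the vertices, for some $\gamma$ with $\sqrt{\varepsilon}\ll\gamma\ll\eta$. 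The natural route is the fractional relaxation: one sets up the linear program of fractional $K_r$-tilings of $R$ equipped, for each copy of $K_r$, with a choice of ``neck'' cluster, subject to the constraint that at each cluster the neck-weight and body-weight combine in the ratio $\sigma:\omega$ to its total weight; LP duality (Farkas' lemma) turns the failure of a near-perfect such fractional tiling into a fractional ``bottle cover'' of $R$ that contradicts the degree-sequence bound above. A feasible fractional solution is then rounded to an integral cluster-split tiling in the usual way, the rounding error being absorbed into the uncovered $\gamma$-fraction. This is the step where the proof genuinely departs from Koml\'os' minimum-degree argument, and I would follow the degree-sequence techniques of~\cite{Tregs}.

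With this structure in hand the proof finishes routinely. For each bottle configuration, take its $r$ pieces $V'_{j_1},\dots,V'_{j_r}$; since the underlying cluster pairs are $\varepsilon$-regular of density $\ge d$, passing to subsets of size $(1-\varepsilon\Delta)|V'_{j_s}|$ (using Fact~\ref{slicinglemma} and the argument of Proposition~\ref{superprop}) makes every pair $(V'_{j_s},V'_{j_t})_{G'}$ superregular, and after discarding a further $O(h)$ vertices per configuration we may assume each resulting part size is divisible by $h$ and that the part sizes lie in the window required by Lemma~\ref{generalkolem} (here $\sigma(H)\le\sigma<h/r$ gives genuine slack). Lemma~\ref{generalkolem} produces a perfect $H$-tiling of the corresponding complete $r$-partite graph, which is a subgraph of maximum degree $\Delta$ of the blown-up pattern, so the Blow-up lemma (Lemma~\ref{blowup}) — or directly Lemma~\ref{keylem} — transfers it into the superregular configuration. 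Taking the union over all configurations gives an $H$-tiling of $G$; the uncovered vertices are those of $V_0$, those in the leftover pieces, and the $O(h)$ discarded per configuration, so in total at most $\varepsilon n+\gamma n+o(n)\le\eta n$. Note that no divisibility of $n$ by $h$ is assumed or needed, since only an almost-perfect tiling is sought and the small arithmetic defects are swept into the uncovered set.

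The main obstacle is the reduced-graph lemma of the second paragraph: showing that the hypothesis — which allows as many as $\omega k/h$ clusters to have genuinely low degree — still forces an almost-spanning bottle-ratio tiling. Those low-degree clusters can only be used in restricted ways (essentially as necks, exactly as the extremal configurations of Section~\ref{sec:ex2}, in particular Proposition~\ref{exex2}, predict), so the fractional tiling must be built to respect this; one has to check via the LP dual that the precise slope $\sigma/\omega$ appearing in the hypothesis is exactly enough to do so, which is where the calibration of the degree-sequence condition is used.
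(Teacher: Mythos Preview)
The paper does not prove this statement: Theorem~\ref{generalalmostmain} is quoted from~\cite{hlt} and used as a black box, so there is no ``paper's own proof'' to compare against. Your outline is broadly the right shape (regularity, inherited degree sequence on $R$, bottle-shaped tiling in $R$, embed via Key/Blow-up lemma), and indeed the present paper remarks that inside the proof of~\cite[Theorem~8.1]{hlt} one finds the fact that the bottle graph $B$ contains a perfect $H$-tiling, which is the lever one actually uses.

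There is, however, a real gap in your third paragraph. You invoke Lemma~\ref{generalkolem} to get a perfect $H$-tiling inside each complete $r$-partite configuration, but Lemma~\ref{generalkolem} has the hypothesis $\mathrm{hcf}(H)=1$, which Theorem~\ref{generalalmostmain} does \emph{not} assume. So as written your argument proves the theorem only for graphs with $\mathrm{hcf}(H)=1$. (The boundary case $\sigma=h/r$ is also excluded by Lemma~\ref{generalkolem}.) The fix is simple and in fact makes the argument cleaner: rather than asking for a \emph{perfect} $H$-tiling of each configuration, tile each configuration greedily. If the $r$ parts have sizes roughly in ratio $\sigma:\omega:\cdots:\omega$ and $\sigma\ge\sigma(H)$, then placing copies of $H$ with a smallest colour class into the neck and the remaining classes into the wide parts can be repeated until one part has fewer than $h$ vertices left; this leaves $O(h)$ uncovered vertices per configuration, contributing $O(hk)=o(n)$ in total, which is absorbed into the $\eta n$ budget. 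Equivalently, one may arrange (as the present paper does in Section~\ref{sectoverview}) that the configurations are blow-ups of a fixed bottle graph $B$ chosen so that $B$ itself has a perfect $H$-tiling---this needs only $\sigma(H)\le\sigma$, not $\mathrm{hcf}(H)=1$---and then the Key lemma transfers that tiling directly. Either way, Lemma~\ref{generalkolem} should not appear in this proof.

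The genuinely hard step is, as you identify, the reduced-graph bottle-tiling lemma. Your LP/Farkas sketch is plausible but is really just a restatement of the problem; the content is in exhibiting the dual witness and checking that the slope $\sigma/\omega$ is exactly what defeats it. That is the substance of~\cite{hlt}, and your proposal does not yet supply it.
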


\COMMENT{AT: deleted Chernoff here as we no longer use it explicitly in paper}
\subsection{B\'ezout's Lemma}
To prove Theorem~\ref{partitionthm} we will need the following elementary arithmetic result.

\begin{lem}[B\'ezout's Lemma]\label{bezoutlemma}
Let $a_1,a_2, \ldots, a_t \in \mathbb{Z}$. Then there exist integers $y_1, y_2, \ldots, y_t \in \mathbb{Z}$ such that 
$$\sum_{i=1}^{t} y_ia_i = \textnormal{hcf}(a_1, a_2, \ldots, a_t)$$
where \textnormal{hcf}$(a_1, a_2, \ldots, a_t)$ is the highest common factor of $a_1, a_2, \ldots, a_t$.
\end{lem}



\section{A tool for the proof of Theorem~\ref{mainthmsigma}}\label{sectstandalone}

In this section, we prove a theorem (Theorem \ref{partitionthm}) that will be used in Sections~\ref{sectcase1} and~\ref{sectcase2} of the proof of~Theorem~\ref{mainthmsigma}. At the beginning of Section~\ref{sectdivisible}, we will have a certain $\hat{B}$-tiling $\hat{\mathcal{B}}$ of a reduced graph $R$ (the graph $\hat{B}$ will be defined later). Denote the copies of $\hat{B}$ in $\hat{\mathcal{B}}$ by $\hat{B}_1, \hat{B}_2, \ldots, \hat{B}_{\hat{k}}$. For applications of Lemma~\ref{generalkolem} required at the end of our proof of Theorem~\ref{mainthmsigma}, we will need $|V_G(\hat{B}_i)|$ to be divisible by $h$ for each $1 \leq i \leq \hat{k}$. The following theorem is the crucial tool for ensuring 
we can remove copies of $H$ from $G$ to achieve this.\COMMENT{AT: edited sentence}

For a graph $H$ with $\chi(H) = r$, recall that $C_H$ is the set of all optimal colourings of $H$ and that given an optimal colouring $c \in C_H$ we let $x_{c,1} \leq x_{c,2} \leq \ldots \leq x_{c,r}$ denote the sizes of the colour classes of $c$. We require the following definitions.

\begin{define}\label{defy}
Let $H$ be a graph with $\chi(H) =: r$. Fix $1 \leq p \leq r-1$. For each $c \in C_H$, define $D_c$ to be the multiset $[x_{c, 1} ,x_{c, 2}, \ldots, x_{c, r}]$. We say that $A$ is a \emph{$p$-subset contained in $D_c$} if $A$ is a multiset, $|A| = p$ and $A = [x_{c, j_1}, x_{c, j_2}, \ldots, x_{c, j_p}]$ where $j_1,j_2, \ldots, j_p \in \{1, \ldots, r\}$ are distinct. Let $z_p := \rp$ be the number of $p$-subsets contained in $D_c$. For each colouring $c \in C_H$, 
label the $p$-subsets contained in $D_c$ by $A_{p,c,1}, A_{p,c,2}, \ldots, A_{p,c,z_p}$. Let $S_{p,c,J} := \sum_{x \in A_{p,c,J}}x$ for each $c \in C_H$, $1 \leq J \leq z_p$.
\end{define}

\begin{thm}\label{partitionthm}
Let $H$ be an $r$-partite graph  and let $h := |H|$. Fix $1 \leq p \leq r-1$. Let $b$ be the number of components of $H$ and $t_1, \ldots, t_b$ be the sizes of the components of $H$. Then 
\begin{itemize} 
\item if $r = 2$ and $\textnormal{hcf}_c(H) = 1$,\COMMENT{AT: made it explicit in the statement of the theorem that we only require $\textnormal{hcf}_c(H) = 1$ as that helps readers understanding, and also we mention this later}
 there exists a collection of non-negative integers $\{a_{i}:  1 \leq i \leq b\}$ and $\bar{a} \in \mathbb{N}$ such that $$a_i \leq \bar{a} \ \ \mbox{for all} \ \ 1 \leq i \leq b,$$  and $$\sum_{i = 1}^{b}a_it_i \equiv 1\mod h.$$ 

\item if $r \geq 3$ and $\textnormal{hcf}_{\chi}(H) = 1$, there exists a collection of non-negative integers $\{a_{p,c,i}: c \in C_H, 1 \leq i \leq z_p\}$ and $\bar{a} \in \mathbb{N}$ such that 
    $$a_{p,c,i} \leq \bar{a} \ \ \mbox{for all} \ c \in C_H \ \mbox{and} \ 1 \leq i \leq z_p,$$ and
    $$\sum_{c \in C_H}\sum_{i = 1}^{z_p}a_{p,c,i}S_{p,c,i} \equiv 1\mod h.$$
\end{itemize}
\end{thm}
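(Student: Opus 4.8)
The plan is to handle the two cases ($r = 2$ and $r \geq 3$) in parallel, using B\'ezout's Lemma (Lemma~\ref{bezoutlemma}) as the engine and then converting the (possibly negative) B\'ezout coefficients into non-negative bounded ones by working modulo $h$.

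First consider the case $r = 2$ with $\textnormal{hcf}_c(H) = 1$. Here the relevant quantities are simply the component sizes $t_1, \ldots, t_b$, and by hypothesis $\textnormal{hcf}(t_1, \ldots, t_b) = 1$. By B\'ezout's Lemma there exist integers $y_1, \ldots, y_b \in \mathbb{Z}$ with $\sum_{i=1}^b y_i t_i = 1$. To make the coefficients non-negative, I would add a large multiple of $h$ to each $y_i$: since $\sum_i t_i = h$, replacing $y_i$ by $a_i := y_i + Kh$ for a sufficiently large integer $K$ (chosen so that every $a_i \geq 0$, e.g. $K \geq \max_i |y_i|$) gives $\sum_i a_i t_i = 1 + Kh \sum_i t_i \cdot \tfrac{1}{\dots}$ — more carefully, $\sum_i a_i t_i = 1 + Kh\cdot(\text{something})$; actually since only $\sum_i t_i = h$ is guaranteed, we get $\sum_i a_i t_i = 1 + Kh \cdot h \equiv 1 \pmod h$ when we add $Kh$ to \emph{one} coefficient times... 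Let me instead add $Kh$ to a single $y_i$, say replace $y_1$ by $y_1 + K h$ with $K$ large; this changes the sum by $Kht_1 \equiv 0 \pmod h$ only if $t_1 \mid$... that fails. The clean fix: add $h$ to each $y_i$ enough times; then the sum increases by $h \cdot (\text{number of additions}) \cdot$... Actually the correct device is: choose $N$ large, set $a_i := y_i + N\,(h/\gcd) \cdot (\dots)$. The simplest rigorous route is to observe $\sum_i (y_i + m\, t_j\text{-free adjustment})$... I will instead simply note that $1 + h\cdot h \cdot N \equiv 1 \pmod h$ and add $hN$ to, say, $y_1$ only if $t_1$... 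Cleanest: add $h \cdot t_2 \cdots$ — no. The genuinely clean statement: replace the equation $\sum y_i t_i = 1$ by $\sum (y_i + c_i) t_i = 1 + \sum c_i t_i$; choosing all $c_i$ to be multiples of $h$ forces $\sum c_i t_i \equiv 0 \pmod h$ since each $c_i t_i \equiv 0$. So set $a_i := y_i + h|y_i|^{+}$-type large multiples of $h$; then $a_i \geq 0$, $\sum a_i t_i \equiv 1 \pmod h$, and $\bar a := \max_i a_i$ works.

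For the case $r \geq 3$ with $\textnormal{hcf}_{\chi}(H) = 1$, the key step is to relate the values $S_{p,c,J}$ (sums of $p$-subsets of colour-class-size multisets) to the quantities whose gcd is governed by $\textnormal{hcf}_{\chi}(H)$. Recall $\textnormal{hcf}_{\chi}(H)$ is the hcf of all differences $x_{c,i+1} - x_{c,i}$ over all optimal colourings $c$. The plan is: (i) show that the integer $1$ lies in the subgroup of $\mathbb{Z}$ generated by $\{S_{p,c,J}\} \cup \{h\}$, equivalently that $\textnormal{hcf}\big(h, \{S_{p,c,J} : c \in C_H,\ 1 \le J \le z_p\}\big) = 1$; (ii) apply B\'ezout to get integer coefficients; (iii) clear signs modulo $h$ exactly as in the $r=2$ case. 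For step (i), the crucial observation is that differences of two $p$-subsets of the same $D_c$ that differ by swapping one element realise differences $x_{c,j} - x_{c,j'}$, and these differences generate $\textnormal{hcf}_{\chi}(H)\mathbb{Z} = \mathbb{Z}$; moreover $h$ itself is the sum $x_{c,1} + \cdots + x_{c,r}$, and a suitable $\mathbb{Z}$-combination of the $S_{p,c,J}$ equals $\binom{r-1}{p-1} h$ (each colour class appears in exactly $\binom{r-1}{p-1}$ of the $p$-subsets), so $\binom{r-1}{p-1}h$ is in the span; combining with the fact that differences $x_{c,j}-x_{c,j'}$ are in the span and their gcd with $h$ is $\textnormal{hcf}(\textnormal{hcf}_\chi(H), h)$, one deduces the overall hcf is $1$. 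I would spell this out by: fixing one colouring $c$, taking two $p$-subsets $A_{p,c,J}, A_{p,c,J'}$ agreeing in $p-1$ entries, so $S_{p,c,J} - S_{p,c,J'} = x_{c,a} - x_{c,b}$ for the differing indices; ranging over $J,J'$ and all $c$ produces all of $\mathcal{D}(H)$ up to sign inside the span, hence $1 \in$ span by $\textnormal{hcf}_\chi(H)=1$.

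The main obstacle I anticipate is step (i) in the $r \ge 3$ case — carefully verifying that the $\mathbb{Z}$-span of $\{S_{p,c,J}\}$ together with $h$ is all of $\mathbb{Z}$, in particular handling the combinatorics of which $p$-subsets can be compared to extract the generators of $\textnormal{hcf}_\chi(H)$ and making sure the appearance of $h$ (needed to kill the "bulk" $\binom{r-1}{p-1}h$ term and isolate the differences) is correctly accounted for. The sign-clearing step is routine: once a B\'ezout identity $\sum_{c,i} y_{p,c,i} S_{p,c,i} + y_0 h = 1$ is in hand, add large multiples of $h$ to each $y_{p,c,i}$ (this only changes the left side by a multiple of $h$ since each added term is a multiple of $h$) to make every coefficient non-negative, absorb the change into the "$\mathrm{mod}\ h$", and take $\bar a$ to be the maximum resulting coefficient.
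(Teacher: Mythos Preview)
Your approach is correct in both cases, and for $r\ge 3$ it is genuinely different from (and shorter than) the paper's argument. For $r=2$ you and the paper do essentially the same thing: B\'ezout gives $\sum y_i t_i = 1$, then shift the coefficients to make them non-negative. The paper exploits $\sum_i t_i = h$ and simply adds a single constant $\hat a$ to every $y_i$, so that $\sum (y_i+\hat a)t_i = 1 + \hat a h$; this is cleaner than your device of adding multiples of $h$ to each coefficient separately, but both work. For $r\ge 3$, the paper never applies B\'ezout to the $S_{p,c,J}$ directly. Instead it applies B\'ezout to the consecutive differences $x_{c,j+1}-x_{c,j}$, obtains integer weights $b_{c,j}$, and then builds for each colouring $c$ a multiset $X_c$ of colour-class sizes whose element sum realises the B\'ezout combination plus a multiple of $h$; the bulk of the paper's proof is a balancing algorithm (using Fact~\ref{zfact}) that massages this multiset until it can be explicitly partitioned into $p$-subsets of $D_c$, from which the $a_{p,c,i}$ are read off as multiplicities. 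Your route bypasses all of that combinatorics: the single observation that two $p$-subsets of $D_c$ differing in one index have $S_{p,c,J}-S_{p,c,J'}=x_{c,a}-x_{c,b}$ (valid since $p\le r-1$ guarantees $p-1$ common indices can be chosen from the remaining $r-2$) immediately puts every element of $\mathcal D(H)$ in the $\mathbb Z$-span of the $S_{p,c,J}$, hence $1$ is in that span by $\mathrm{hcf}_\chi(H)=1$, and sign-clearing modulo $h$ finishes. What the paper's approach buys is an explicit construction of the $a_{p,c,i}$ as partition multiplicities; what yours buys is brevity and the avoidance of Fact~\ref{zfact} and the algorithm altogether.
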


\COMMENT{AT:deleted sentence as I'm not sure it adds anything.}

For each $1 \leq p \leq r - 1$, $c \in C_H$ and $j \in \{1, \ldots, r\}$, let $Z_{p,c,j}$ be the multiset defined by the following table:

\begin{center}
\begin{tabular}{ c|c|c|c|c|c|c|c } 
 
 Colour class size          & $x_{c, 1}$    &  $\cdots$       &  $x_{c, j-1}$     &  $x_{c, j}$       & $x_{c, j+1}$  &  $\cdots$             & $x_{c, r}$    \\ 
 \hline
 Multiplicity in $Z_{p,c,j}$  & $p$             &  $\cdots$       &  $p$                &  $p + 1$            & $p$             &  $\cdots$             & $p$             \\ 

\end{tabular}
\end{center}

The following fact will be useful in our proof of Theorem~\ref{partitionthm}.

\begin{fact}\label{zfact}
For any $1 \leq J,L \leq r$, we can partition $Z_{p,c,J}$ into $\{x_{c, L}\}$ and $r$ $p$-subsets contained in $D_c$.
\end{fact}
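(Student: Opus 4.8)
\textbf{Proof proposal for Fact~\ref{zfact}.}

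The plan is to prove this by an explicit counting/matching argument, exhibiting the desired partition directly. Recall that $Z_{p,c,J}$ is the multiset in which $x_{c,J}$ appears with multiplicity $p+1$ and every other colour class size $x_{c,i}$ (for $i \neq J$) appears with multiplicity $p$; in particular $|Z_{p,c,J}| = (p+1) + (r-1)p = rp + 1$. On the other hand, a disjoint union of $r$ $p$-subsets of $D_c$ together with a singleton $\{x_{c,L}\}$ has total size $rp + 1$ as well, so the cardinalities match and the claim is at least plausible. The key observation is that the element $x_{c,L}$ we peel off as the singleton is precisely accounted for by the fact that, across $r$ many $p$-subsets each of size $p$, each of the $r$ colour-class labels can be used a total of $p$ times if we distribute them correctly, except we need one label used $p+1$ times (namely $J$) and one label used $p-1$ times (namely $L$, since after removing the singleton $\{x_{c,L}\}$ only $p$ copies of $x_{c,L}$ remain to be distributed among the $p$-subsets, wait — let me recount).

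First I would handle the case $J = L$: here we must partition the multiset $[x_{c,1}^{(p)},\dots,x_{c,J}^{(p+1)},\dots,x_{c,r}^{(p)}]$ into $\{x_{c,J}\}$ plus $r$ $p$-subsets. Removing the singleton $\{x_{c,J}\}$ leaves a multiset in which every label $x_{c,i}$, $1\le i\le r$, has multiplicity exactly $p$; this is just $p$ disjoint copies of the full set $D_c$ (viewed as the $r$-subset $[x_{c,1},\dots,x_{c,r}]$), so we can take $p$ copies of the $r$-subset — but we need $r$ $p$-subsets, not $p$ $r$-subsets. The correct move is to arrange the $rp$ remaining elements in a $p \times r$ array whose $(s,i)$ entry is $x_{c,i}$, and then read off the $p$-subsets as the $r$ "diagonals" of this array (indices taken cyclically), i.e. for $0 \le s \le r-1$ the $s$-th $p$-subset is $\{x_{c,\,i} : i \equiv s + \ell \pmod r,\ \ell = 0,\dots,p-1\}$; since $p \le r-1 < r$ each diagonal picks $p$ distinct indices, so each is a genuine $p$-subset contained in $D_c$, and every index is hit exactly $p$ times over the $r$ diagonals. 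For general $L \neq J$, I would instead remove the singleton $\{x_{c,L}\}$, leaving a multiset in which $x_{c,J}$ has multiplicity $p+1$, $x_{c,L}$ has multiplicity $p-1$, and all others have multiplicity $p$; then a slight modification of the cyclic-diagonal construction (replace one occurrence of $x_{c,L}$ by $x_{c,J}$ in exactly one of the $p$-subsets that does not already contain $x_{c,J}$, which exists since $p < r$ so some diagonal omits index $J$) does the job, provided the resulting $p$-subset still has $p$ distinct indices — which it does, because that diagonal contained $x_{c,L}$ but not $x_{c,J}$, so swapping gives a set of $p$ distinct indices.

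The main obstacle is bookkeeping: making sure that in the $L \neq J$ case one can always find a diagonal $p$-subset that contains index $L$ but not index $J$ (so that the swap is legal and produces a valid $p$-subset). This is where the hypothesis $p \le r-1$ is used crucially: among the $r$ cyclic diagonals, exactly $p$ of them contain any fixed index and exactly $r-p \ge 1$ omit it; a short pigeonhole/inclusion argument shows the set of diagonals containing $L$ but not $J$ is nonempty as long as $p \le r-1$ (indeed the diagonals containing $L$ number $p$, those containing $J$ number $p$, and if these two families coincided it would force a rigid structure that $p<r$ rules out — or more simply, pick the diagonal starting at index $L$ and check whether $J$ lies in its length-$p$ cyclic run; if it does, shift to another diagonal containing $L$, of which there are $p \ge 1$). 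Once that is nailed down, everything else is a direct verification that multiplicities add up correctly, which I would state concisely rather than belabour.
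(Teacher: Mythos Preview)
The paper does not actually supply a proof of Fact~\ref{zfact}; it is stated and then used inside the proof of Theorem~\ref{partitionthm} without justification. So there is nothing to compare against, and your task reduces to giving a clean self-contained argument.

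Your approach is correct and natural. The cyclic-diagonal construction for the case $J=L$ is exactly right: the $r$ sets $P_s := \{x_{c,\,s+\ell \bmod r} : 0 \le \ell \le p-1\}$ for $s=0,\dots,r-1$ are $p$-subsets (since $p<r$) and each index is used exactly $p$ times. For $J\neq L$, your swap idea is also right, but the justification you give for the existence of a diagonal containing $L$ and not $J$ is loose. The sentence ``if these two families coincided it would force a rigid structure that $p<r$ rules out'' is not quite the reason (what matters is $L\neq J$, not $p<r$), and the ``or more simply'' alternative does not finish the job, since shifting to another diagonal containing $L$ gives no guarantee about $J$. The clean way is: the set of $s$ for which $P_s$ contains index $i$ is the cyclic interval $\{i-p+1,\dots,i\}\subseteq \mathbb{Z}_r$; for $L\neq J$ these two intervals of the same length $p$ are distinct, hence neither contains the other, so there is some $s$ with $L\in P_s$ and $J\notin P_s$. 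With that one sentence in place your argument is complete.
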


\smallskip
\begin{proofofpartitionthm}
Firstly, we will consider the case when $r = 2$ and $\textnormal{hcf}_c(H) = 1$. So $H$ must have multiple components. The sizes of these components of $H$ are $t_1, t_2, \ldots, t_b$.  
Since $\textnormal{hcf}_{c}(H) = 1$, by Bezout's Lemma (Lemma~\ref{bezoutlemma}) there exist integers $a'_1, \ldots, a'_b$ such that $$\sum_{i = 1}^{b}a'_it_i = \textnormal{hcf}(t_1, \ldots, t_b) = 1.$$ Since $\sum_{i = 1}^{b}t_i = h$, there exists $\hat{a} \in \mathbb{N} \cup \{0\}$ such that
$$\sum_{i = 1}^{b}(a'_i + \hat{a})t_i \equiv 1\mod h$$ and $$a'_i + \hat{a} \geq 0 \ \ \mbox{for all} \ \ 1 \leq i \leq b.$$ For each $1 \leq i \leq b$, take $a_i := a'_i + \hat{a}$ and $\bar{a} := \max\limits_{i=1, \ldots,b} \ a_i$. 

\smallskip
Next consider when $r \geq 3$. Instead of explicitly calculating $a_{p,c,i}$ for each $c \in C_H$, $1 \leq i \leq z_p$, we will construct for each $c \in C_H$ a multiset $X_c$ of bounded size which can be partitioned into $p$-subsets contained in $D_c$. Further, we will construct our multisets $X_c$ such that
$$\sum_{c \in C_H}\sum_{x \in X_c} x \equiv 1\mod h.$$ 
Observe that constructing such multisets $X_c$ immediately yields a collection of non-negative integers $\{a_{p,c,i}: c \in C_H, 1 \leq i \leq z_p\}$ that satisfy the conditions in Theorem~\ref{partitionthm}. Indeed, 
for each $c \in C_H$ and $1\leq i \leq z_p$, we take $a_{p,c,i}$ to be precisely the number of times $A_{p,c,i}$ occurs in the partition of $X_c$ into $p$-subsets.
\COMMENT{AT: perhaps what was written before `over explains' things. I think the reader should be see what we mean with this new paragraph}


In order to start constructing our multisets $X_c$, we define the following multiset: $$\mathcal{D}^{*}(H) := \underset{c \in C_H}{\bigcup} [x_{c, j+1} - x_{c, j} \  | \ j = 1, \ldots, r-1].$$

As $\textnormal{hcf}_{\chi}(H) = 1$ we can apply Lemma~\ref{bezoutlemma} to the multiset $\mathcal{D}^{*}(H)$ to get for each $c \in C_H$, $1 \leq j \leq r-1$ integers $b_{c, j}$ such that the following holds:
\begin{equation} \label{bezouts}
\sum_{c \in C_H}\sum_{j=1}^{r-1} b_{c, j}( x_{c, j+1} - x_{c, j} ) \equiv 1\mod h.
\end{equation}

We now construct our multisets $X_c$. Fix $c \in C_H$. Let $t_c \in \mathbb{N}$ be the smallest $t' \in \mathbb{N}$ such that $$pt' \geq \textnormal{max}\{ | b_{c, 1} |, | b_{c, 1} - b_{c, 2} |, | b_{c, 2} - b_{c, 3} |, \ldots, |b_{c, r-2} - b_{c, r-1}|, |b_{c, r-1}|\}.$$ 
Then $pt_c - b_{c, 1} , pt_c + b_{c, 1} - b_{c, 2} ,  pt_c + b_{c, 2} - b_{c, 3} ,\ldots,  pt_c + b_{c, r-2} - b_{c, r-1} , pt_c + b_{c, r-1}$ are non-negative integers. 
Let $Y_c$ be the multiset defined by the following table:

\begin{center}
\begin{tabular}{ m{3cm} | m{5em} | m{5em} | m{5em} | m{1em} | m{6em} | m{5em} }
 
 Colour class size         & $x_{c, 1}$            & $x_{c, 2}$                        & $x_{c, 3}$                            & $\cdots$    & $x_{c, r-1}$                          & $x_{c, r}$            \\ 
 \hline
 Multiplicity in $Y_c$  & $pt_c - b_{c, 1}$      & $pt_c + b_{c, 1} - b_{c, 2}$     & $pt_c + b_{c, 2} - b_{c, 3}$         & $\cdots$    & $pt_c + b_{c, r-2} - b_{c, r-1}$     & $pt_c + b_{c, r-1}$    \\ 

\end{tabular}
\end{center}
Then $|Y_c| = rpt_c$. If we can partition $Y_c$ into $p$-subsets contained in $D_c$ then we take $X_c := Y_c$.
Assume we cannot. Then the multiplicities of $x_{c,1}, \ldots, x_{c,r}$ in $Y_c$ must be sufficiently different from one another. We employ the following algorithm which transforms $Y_c$ into a multiset which can be partitioned into $p$-subsets contained in $D_c$ using Fact \ref{zfact}. To state the algorithm we require the following definition.

\begin{define} \textnormal{For each $c \in C_H$, $1 \leq i \leq r$, let $m_{c,i}$ be the multiplicity of $x_{c,i}$ in $Y_c$. 
Let $J,L \in \{1, \ldots, r\}$ such that $J \neq L$; $m_{c,J} \geq \frac{\sum_{i = 1}^{r}m_{c,i}}{r}$; $m_{c,L} \leq \frac{\sum_{i = 1}^{r}m_{c,i}}{r}$; $m_{c,L} + 1 \neq m_{c,J}$; $m_{c,L} \neq m_{c,J}$. Let $Y_c' := Y_c - \{x_{c, J}\} + \{x_{c, L}\}$.\footnote{That is, $Y_c'$ is the multiset $Y_c$ except with $x_{c,J}$ having multiplicity $m_{c,J} - 1$ and $x_{c,L}$ having multiplicity $m_{c,L} + 1$.} 
Then we say that $Y_c'$ is \emph{more balanced than $Y_c$}.}
\end{define}

\begin{alg} \ \begin{itemize}
    \item[1)] Let $Q := \emptyset$.
    \item[2)] If $|m_{c,i} - m_{c,j}| = 0$ for all $1 \leq i,j \leq r$, output $Y_c$ and $Q$. Otherwise, choose $J, L \in \{1, \ldots, r\}$ such that $Y_c' := Y_c - \{x_{c, J}\} + \{x_{c, L}\}$ is more balanced than $Y_c$.
    \item[3)] Add $p$ copies of $H$ with colouring $c$ to $Y_c$. That is, $x_{c,i}$ now has multiplicity $m_{c,i} + p$ in $Y_c$ for each $1 \leq i \leq r$.
    \item[4)] Take $Z_{p,c,J}$ to be the union of $\{x_{c,J}\}$ and these $p$ copies of $H$. By Fact~\ref{zfact} there exists a partition of $Z_{p,c,J}$ into $\{x_{c,L}\}$ and $r$ $p$-subsets contained in $D_c$.
    \item[5)] Place into $Q$ these $r$ $p$-subsets contained in $D_c$.
    \item[6)] Take $Y_c := Y_c'$ and update the value of each $m_{c,i}$ (that is, $m_{c,J}$ has decreased by 1 and $m_{c,L}$ has increased by 1). Go to Step 2.
\end{itemize} 
\end{alg}

Therefore, at the end of the algorithm $|Y_c| = rpt_c$ and $|m_{c,i} - m_{c,j}| = 0$ for all $1 \leq i,j \leq r$.
In particular, it is easy to see
  that $Y_c$ now has a partition $Q_{Y_c}$ into $p$-subsets contained in $D_c$. 
Let $\hat{t}_c$ be the number of collections of $p$ copies of $H$ added during the algorithm and $\hat{t}_c := t_c + \hat{t}_c$. Then the multiset $\hat{Y}_c$, defined by the table below, can be partitioned into $p$-subsets contained in $D_c$ using the partition $Q \cup Q_{Y_c}$:

\begin{center}
\begin{tabular}{ m{3cm} | m{5em} | m{5em} | m{5em} | m{1em} | m{6.2em} | m{5em} } 
 
 Colour class size          & $x_{c, 1}$                    & $x_{c, 2}$                                & $x_{c, 3}$                                    & $\cdots$    & $x_{c, r-1}$                                  & $x_{c, r}$                    \\ 
 \hline
 Multiplicity in $\hat{Y}_c$  & $p\hat{t}_c - b_{c, 1}$      & $p\hat{t}_c + b_{c, 1} - b_{c, 2}$     & $p\hat{t}_c + b_{c, 2} - b_{c, 3}$         & $\cdots$    & $p\hat{t}_c + b_{c, r-2} - b_{c, r-1}$     & $p\hat{t}_c + b_{c, r-1}$    \\ 

\end{tabular}
\end{center}

Take $X_c := \hat{Y}_c$. We now confirm that our multisets $X_c$ satisfy  $$\sum_{c \in C_H}\sum_{x \in X_c} x \equiv 1\mod h.$$
By (\ref{bezouts}) and the definition of $X_c$ for each $c \in C_H$ we have
\begin{align*}
& \sum_{c \in C_H}\sum_{x \in X_c} x 
\\ = & \sum_{c \in C_H}\left (\sum_{j=1}^{r-1} b_{c, j}( x_{c, j+1} - x_{c, j} ) + p\hat{t}_c\left ( \sum_{j = 1}^{r} x_{c, j} \right ) \right ) 
& \\ = & \left ( \sum_{c \in C_H}\sum_{j=1}^{r-1} b_{c, j}( x_{c, j+1} - x_{c, j} )\right ) + \left (p\sum_{c \in C_H}\hat{t}_c\right )h 
& \\ \overset{(\ref{bezouts})}{\equiv} & \ 1\mod h.
\end{align*}
Therefore, recalling the discussion earlier in this proof, there must exist the desired collection of non-negative integers $\{a_{p,c,i}: c \in C_H, 1 \leq i \leq z_p\}$. 
\COMMENT{AT: don't think we have to explicitly mention $\bar{a}$ here as its existence is just because every finite set of numbers has a maximum}
\end{proofofpartitionthm}\qed

\section{Proof Overview}\label{sectoverview}

The rest of this paper  
will be devoted to proving Theorem~\ref{mainthmsigma} and here we outline the proof. As noted in Section~\ref{sectdegseqcond}, our proof follows closely K\"{u}hn and Osthus' proof of Theorem~\ref{kothm} in \cite{kuhn}. 

Let $H$, $G$, $\eta$ and $\sigma$ be as in the statement of the theorem. 
In particular, $h:=|H|$ and $\omega :=(h-\sigma)/(r-1)$.
Note that it suffices to prove the result in the case when $\sigma \in \mathbb{Q}$.
First we define a bottle graph $B$ that contains a perfect $H$-tiling.
 \begin{define} Let  $a,b \in \mathbb{N}$ such that $\sigma = a/b$. Let $\omega(H) := (h - \sigma(H))/(r-1)$ and $\hat{c} := b(r-1)(\omega(H) - \sigma(H))$.
Define $B$ to be the $r$-partite bottle graph with neck $\sigma \hat{c}$ and width $\omega \hat{c}$ (note that $\sigma \hat{c}, \omega \hat{c} \in \mathbb N$). Observe that $|B| = h\hat{c}$; $\sigma(B) = \sigma \hat{c}$; $\omega(B) = \omega \hat{c}$.
 Further,
\begin{equation*}
\chi_{cr}(B) = r - 1 + \sigma/\omega=h/\omega.
\end{equation*}
\end{define}
Since $|B| = h\hat{c}$; $\sigma(B) = \sigma \hat{c}$; $\omega(B) = \omega \hat{c}$, we have that $G$ satisfies the hypothesis of the degree sequence Koml\'{o}s theorem (Theorem~\ref{generalalmostmain})
with $B$, $\sigma (B)$ and $\omega (B)$ playing the roles of $H$, $\sigma$ and $\omega$ respectively. That is, $G$ contains an almost perfect $B$-tiling.
In fact, as the reduced graph $R$ of $G$ almost inherits the degree sequence of $G$, Theorem~\ref{generalalmostmain} ensures that $R$ contains an almost perfect $B$-tiling $\mathcal B$.
Further note that the choice of $\hat{c}$ implies that $B$ has a perfect $H$-tiling consisting of $\hat{c}$ copies of $H$. (A simple proof of this can be found inside the proof of Theorem~8.1 in~\cite{hlt}.)

Ideally one would like to use $\mathcal B$ as a framework to build  the perfect $H$-tiling in $G$. However, as explained shortly, we need more flexibility in our tiling in $R$.
Therefore, we introduce the following `modified' version of $B$.

 \begin{define}
Let $s \in \mathbb{N}$ be sufficiently large and $\lambda \in \mathbb{R}^{+}$ be sufficiently small.
Recall that $\sigma < \omega$. Let $\hat{B}$ be the $r$-partite bottle graph with neck $\sigma(1 + \lambda)s/\omega$ and width $s$.\footnote{We have that $\sigma (1 + \lambda)/\omega < 1$ by our choice of $\lambda$ and that $\sigma < \omega$.}
Moreover, we choose $\lambda$ and $s$ such that $\hat{B}$ contains a perfect $B$-tiling. Hence $\hat{B}$ contains a perfect $H$-tiling.
Note that \begin{equation*}
\chi_{cr}(\hat{B}) = r - 1 + \sigma(1 + \lambda)/\omega. 
\end{equation*} \end{define}

Since  $\lambda$ is chosen to be small (and so $\chi_{cr}(\hat{B})$ is very close to $\chi_{cr}(B)$), one can still  apply Theorem~\ref{generalalmostmain} on inputs $\hat{B}$ and $R$. That is, 
$R$ contains  an almost perfect $\hat{B}$-tiling $\mathcal{\hat{B}}$.  Denote the copies of $\hat{B}$ in $\hat{\mathcal{B}}$ by $\hat{B}_1, \hat{B}_2, \ldots, \hat{B}_{\hat{k}}$. By removing a small number of vertices from each cluster in $R$, we can ensure the edges of each $\hat{B}_i$ correspond to superregular pairs. Let $V_0$ denote the set of all vertices in $G$ not contained in the clusters lying in the tiling $\mathcal{\hat{B}}$.

For each $1 \leq i \leq \hat{k}$, let $\hat{G}_i$ be the $r$-partite subgraph of $G$ whose $j$th vertex class is the union of all those clusters contained in the $j$th vertex class of $\hat{B}_i$, for each $1 \leq j \leq r$. Let $G^{*}_i$ be the complete $r$-partite graph on the same vertex set as $\hat{G}_i$. 
We introduce the graph $\hat{B}$ (rather than just working with $B$) since $\hat{B}$ has the following crucial property: For each $1 \leq i \leq \hat{k}$ we can arbitrarily delete a small number of vertices from $G^{*}_i$
(and correspondingly $\hat{G}_i$)
 and, provided $|V(G^{*}_i)|$ is now divisible by $h$, the resulting graph satisfies the hypothesis of Lemma~\ref{generalkolem}. That is, this graph contains a perfect $H$-tiling. Then the Blow-up lemma (Lemma~\ref{blowup}) implies that each $\hat{G}_i$ contains a perfect $H$-tiling.

We make use of this property of $\hat{B}$ as follows: In Section~\ref{sectabsorb} we remove copies of $H$ from $G$ that cover all vertices in $V_0$, as well as a small (possibly zero) number of vertices from each $\hat{G}_i$;
call this $H$-tiling (formed from these copies of $H$) $\mathcal{H}_1$. Deleting these covered vertices from each $\hat{G}_i$, if $|V(\hat{G}_i)| \ (= |V(G^*_i)|)$ is still divisible by $h$ for each $1 \leq i \leq \hat{k}$ then each $\hat{G}_i$ now contains a perfect $H$-tiling (by our argument above). However, for some $i$, we may have that $|V(\hat{G}_i)|$ is not divisible by $h$.
So in Section~\ref{sectdivisible} we remove a further bounded number of copies of $H$, forming an $H$-tiling $\mathcal{H}_2$, to ensure $|V(\hat{G}_i)| \ (= |V(G^*_i)|)$ is divisible by $h$ for each $1 \leq i \leq \hat{k}$.
Thus, we now have that each $\hat{G}_i$ contains a perfect $H$-tiling $\hat{\mathcal{H}}_i$. Combining the tilings $\mathcal{H}_1, \mathcal{H}_2, \hat{\mathcal{H}}_1, \ldots, \hat{\mathcal{H}}_{\hat{k}}$ yields a perfect $H$-tiling in $G$, as desired.

Our argument in Section~\ref{sectdivisible} will split into two cases, the first being when $\chi(H) \geq 3$ and the latter when $H$ is bipartite. 
This is where our proof differs the most from that in~\cite{kuhn} as we must make use of 
 Theorem~\ref{partitionthm} to find suitable copies of $H$ to ensure each $|V(\hat{G}_i)|$ is  divisible by $h$. 
\COMMENT{AT: The start of the proof overview was too dense. Tried to help the reader more.}
\COMMENT{AT: bit too strong to say our argument significantly differs from theirs here, as the Claims you cite, do look a lot like ours!}


\section{Proof of Theorem~\ref{mainthmsigma}} \label{sectsetup}

\subsection{Applying the regularity lemma}\label{sectreglem}
Note that it suffices to prove the theorem in the case when $\sigma \in \mathbb Q$.
Let $n$ be sufficiently large  and fix constants that satisfy the following hierarchy 
\begin{equation}\label{hier}
0 < 1/n \ll 1/M' \ll \varepsilon \ll d \ll \eta_1 \ll \beta \ll \alpha \ll \lambda \ll \eta, \sigma/\omega, 1 - \sigma/\omega, 1/h.\COMMENT{JH: Should we place $s$ or $\lambda_1$ (from Lemma \ref{generalkolem} in this hierarchy? Kuhn and Osthus don't put either in.}
\end{equation} 
As discussed in Section \ref{sectoverview}, we choose $s \in \mathbb{N}$ sufficiently large and define $\hat{B}$ to be the $r$-partite bottle graph with neck $\sigma(1 + \lambda)s/\omega$ and width $s$. As before, we choose $\lambda$ and $s$ such that $\hat{B}$ contains a perfect $B$-tiling, which implies that $\hat{B}$ contains a perfect $H$-tiling.
Note again that \begin{equation*}
\chi_{cr}(\hat{B}) = r - 1 + \sigma(1 + \lambda)/\omega. 
\end{equation*}
Moreover, choose 
$\eta_1$ and $M'$ such that 
\begin{equation*}
\eta_1 \ll 1/|\hat{B}| \ \mbox{and} \ M' \geq n_0(\eta_1, \sigma (\hat{B}), \hat{B}),\COMMENT{AT: important that it is $\sigma (\hat{B})$ not just $\sigma$ here}
\end{equation*}
where $n_0$ is defined as in Theorem~\ref{generalalmostmain}. Let $G$ be an $n$-vertex graph as in the statement of Theorem~\ref{mainthmsigma}. Apply Lemma~\ref{degformreglemma} with parameters $\varepsilon$, $d$ and $M'$ to $G$ to obtain clusters $V_1, \ldots, V_k$ and an exceptional set $V_0$, where $q := |V_1| =  \ldots = |V_k|$ and $k \geq M'$.
Let $R$ be the corresponding reduced graph. Using (\ref{hier}), we may apply Lemma~\ref{inheriteddegseqreduced} with parameters $M', n, \varepsilon, d, \eta, h, \omega, \sigma$ to conclude that $R$ has degree sequence 
 $d_{R,1} \leq d_{R,2} \leq \ldots \leq d_{R,k}$ where
\begin{equation}\label{originalrdegseq}
d_{R,i} \geq \left(1 - \frac{\omega+\sigma}{h}\right)k + \frac{\sigma}{\omega}i + \frac{\eta k}{2} \ \ \mbox{for all $1 \leq i \leq \frac{\omega k}{h}$.}
\end{equation} 
For a graph $F$, recall that $\sigma(F)$ denotes the size of the smallest possible 
colour class in any $\chi(F)$-colouring of $F$ and $\omega(F) := (|F| - \sigma(F))/(\chi(F)-1)$.
Since $\lambda \ll \eta$, we have that  
\begin{equation} \label{rhatbdegseq}
d_{R,i} \geq \left(1 - \frac{\omega(\hat{B})+\sigma(\hat{B})}{|\hat{B}|}\right)k + \frac{\sigma(\hat{B})}{\omega(\hat{B})}i \ \ \mbox{for all $1 \leq i \leq \frac{\omega(\hat{B}) k}{|\hat{B}|}$.}
 \end{equation}

Since $|R| = k \geq M' \geq n_0(\eta_1, \sigma(\hat{B}), \hat{B})$ and (\ref{rhatbdegseq}) holds, we apply Theorem~\ref{generalalmostmain} to find a $\hat{B}$-tiling $\hat{\mathcal{B}}$ covering all but at most $\eta_1 k$ vertices in $R$. 
Denote the copies of $\hat{B}$ in $\hat{\mathcal{B}}$ by $\hat{B}_1, \hat{B}_2, \ldots, \hat{B}_{\hat{k}}$. 
Now delete all clusters not contained in some $\hat{B}_i$ from $R$ and add the vertices in these clusters to $V_0$. Therefore now $$|V_0| \leq \varepsilon n + \eta_1 n \leq 2\eta_1 n.$$
From now on, we denote by $R$ the subgraph of the reduced graph induced by all the remaining clusters and redefine $k := |R|$. Since $\eta_1 \ll \eta$, (\ref{originalrdegseq}) implies that $R$ has degree sequence $d_{R,1} \leq d_{R,2} \leq \ldots \leq d_{R,k}$ where
\begin{equation}\label{rdegseqnew}
 d_{R,i} \geq \left(1 - \frac{\omega+\sigma}{h}\right)k + \frac{\sigma}{\omega}i + \frac{\eta k}{4} \ \ \mbox{for all $1 \leq i \leq \frac{\omega k}{h}$.}
\end{equation}

For each $\hat{B}_i$ in $\hat{\mathcal{B}}$, let $\mathcal{B}_i$ be a perfect $B$-tiling in $\hat{B}_i$ (recall that earlier we chose $s$ and $\lambda$ such that $\hat{B}$ contains a perfect $B$-tiling). Let $\mathcal{B} := {\bigcup} \ \mathcal{B}_i$ and observe that $\mathcal{B}$ is a perfect $B$-tiling in $R$. To aid with calculations we will often work with $\mathcal{B}$ instead of $\hat{\mathcal{B}}$.  

Let $q' := (1 - \varepsilon|\hat{B}|)q$.
By Proposition~\ref{superprop}, for all $1 \leq i \leq \hat{k}$ we can remove $\varepsilon|\hat{B}| q$ vertices from each cluster $V_a$ belonging to $\hat{B}_i$ so that each edge $V_aV_b$ in $\hat{B}_i$ now corresponds to a $(2\varepsilon, d/2)$-superregular pair
 $(V_a, V_b)_{G'}$. Further, all clusters now have size $q'$ and for each edge $V_aV_b$ in $\hat{B}_i$ the pair $(V_a,V_b)_{G'}$ is a $2\varepsilon$-regular pair with density at least $d/2$ (by Fact \ref{slicinglemma}).
\COMMENT{AT: I deleted the footnote here as it is a standard property so you don't need to point out where it is being used.}
Add all the vertices we removed from the clusters to $V_0$ and observe that now, since $\varepsilon \ll \eta_{1}, 1/|\hat{B}|$, \COMMENT{AT: It is important that $\varepsilon \ll 1/|\hat{B}|$ in addition to $\varepsilon \ll \eta_{1}$. Do you see why?}
\begin{equation} \label{v0}
|V_0| \leq 3\eta_1 n.
\end{equation}
From now on, we will refer to the subclusters of size $q'$ as the clusters of $R$.\\

By considering a random partition of each cluster $V_a$, and applying  a Chernoff bound, one can obtain the following partition of each cluster.
\COMMENT{AT: rewrote} 
\begin{claim}\label{kodegseqclaim}
Let $V_a$ be a cluster. Then there exists a partition of $V_a$ into a red part $V_a^{red}$ and a blue part $V_a^{blue}$ such that $$||V_{a}^{red}| - |V_a^{blue}|| \leq \varepsilon q'$$ and $$||N_G(x) \cap V_a^{red}| - |N_G(x) \cap V_a^{blue}|| < \varepsilon q' \ \ \mbox{for all $x \in V(G)$}.$$
\end{claim}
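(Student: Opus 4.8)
The plan is to obtain the partition via a single application of a concentration inequality to a uniformly random $2$-colouring of $V_a$. First I would fix the cluster $V_a$ (of size $q'$) and, independently for each vertex $v \in V_a$, place $v$ in $V_a^{red}$ with probability $1/2$ and in $V_a^{blue}$ otherwise. This produces a random partition $V_a = V_a^{red} \cup V_a^{blue}$.

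The two required properties are each a statement that some binomial-type random variable does not deviate too far from its mean. For the first, $|V_a^{red}|$ is a sum of $q'$ independent $0/1$ variables with mean $q'/2$, so by a Chernoff bound $\big||V_a^{red}| - q'/2\big| < \varepsilon q'/2$ with probability at least $1 - 2\exp(-c\varepsilon^2 q')$ for an absolute constant $c>0$; on this event $\big||V_a^{red}| - |V_a^{blue}|\big| < \varepsilon q'$. For the second, fix $x \in V(G)$ and write $N := N_G(x) \cap V_a$; then $|N_G(x)\cap V_a^{red}|$ is a sum of $|N|$ independent $0/1$ variables of mean $1/2$, hence has mean $|N|/2$, and again by a Chernoff bound $\big||N_G(x)\cap V_a^{red}| - |N|/2\big| < \varepsilon q'/2$ with probability at least $1 - 2\exp(-c'\varepsilon^2 q')$ (using $|N| \le q' $, and noting the bound is trivial if $|N|$ is tiny); on this event $\big||N_G(x)\cap V_a^{red}| - |N_G(x)\cap V_a^{blue}|\big| < \varepsilon q'$.

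The final step is a union bound over the at most $1 + n$ `bad' events (one from the size condition, one for each $x \in V(G)$). Since $q' = (1-\varepsilon|\hat B|)q$ and $q = \Theta(n/k) \ge \Theta(n/M)$ is linear in $n$ (with $M$, $\varepsilon$, $|\hat B|$ all constants from the hierarchy (\ref{hier})), we have $\exp(-c\varepsilon^2 q') = o(1/n)$, so the total failure probability is at most $(n+1)\cdot 2\exp(-c''\varepsilon^2 q') < 1$ for $n$ large. Hence a partition with both properties exists, proving the claim.

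I do not anticipate a genuine obstacle here; this is a routine first-moment/concentration argument. The only point requiring a little care is making sure the deviation terms are measured against $\varepsilon q'$ (the cluster size) rather than against the individual means $|N|/2$, which could be much smaller — but since the Chernoff bound gives an \emph{additive} deviation of at most $\varepsilon q'/2$ regardless, and since we only need control relative to $q'$, this causes no difficulty. One should also remember that $q'$ is linear in $n$ so that the union bound over $\sim n$ vertices closes; this is exactly why the claim is stated for clusters after the regularity lemma has fixed their size.
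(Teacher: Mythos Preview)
Your proposal is correct and matches the paper's own treatment: the paper does not give a formal proof but simply remarks that the claim follows ``by considering a random partition of each cluster $V_a$, and applying a Chernoff bound,'' which is exactly the argument you have written out in detail. Your handling of the union bound over all $x\in V(G)$ and of the case where $|N_G(x)\cap V_a|$ is small is fine, and the observation that $q'$ is linear in $n$ is precisely what makes the union bound close.
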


Apply Claim~\ref{kodegseqclaim} to every cluster to yield a partition of $V(G) - V_0$ into red and blue vertices. In the next section, we will remove vertices of particular copies of $H$ in $G$ from their respective clusters and do so in such a way that we avoid all the red vertices of $G$.
After removing these vertices, we will be able to conclude that that each (modified) pair $(V_a, V_b)_{G'}$ is $(5\varepsilon, d/5)$-superregular\footnote{Where $V_aV_b$ is any edge in any $\hat{B}_i$ in $\hat{\mathcal{B}}$.} since $V_a^{red}$ and $V_b^{red}$ will have had no vertices removed from them. After the next section, we will only remove a bounded number of vertices from the clusters, which will not affect the superregularity of pairs of clusters in any significant way.

\subsection{Covering the exceptional vertices}\label{sectabsorb}
As in~\cite{kuhn},
given $x \in V_0$, we call a copy of $B \in \mathcal{B}$ \emph{useful for $x$} if there exist $r - 1$ clusters in $B$, each belonging to a different vertex class of $B$, such that $x$ has at least $\alpha q'$ neighbours in each cluster.
Denote by $k_x$ the number of copies of $B$ in $\mathcal{B}$ that are useful for $x$. The following calculation demonstrates that 
\begin{equation*}
k_x \beta q' \geq |V_0|.
\end{equation*} 
By (\ref{hier}) and (\ref{v0}), we have that 
\begin{align*}
 &  k_x|B|q' + (|\mathcal{B}| - k_x)(|B|q' - (1 - \alpha)q'\hat{c}(\omega + \sigma)) \\
 &  \geq d_G(x) - |V_0| \\
 &  \geq \left(1 - \frac{\omega + \sigma}{h} + \frac{\eta}{2}\right)q'|B||\mathcal{B}|,
\end{align*}
which implies $$(|\mathcal{B}| - k_x)(-(1 - \alpha)q'\hat{c}(\omega + \sigma)) \geq \left(-\frac{\omega + \sigma}{h} + \frac{\eta}{2}\right)q'h\hat{c}|\mathcal{B}|.$$
Rearranging, we get 
\begin{equation*}
k_x \geq \frac{|\mathcal{B}|\left(\frac{h\eta}{2} - \alpha (\omega + \sigma)\right)}{(\omega + \sigma)(1 - \alpha)}. 
\end{equation*}
Since $\alpha \ll \eta$, we have that
$$k_x \geq \frac{\eta |\mathcal{B}|}{4}.$$ 
Now as $|\mathcal{B}|q' \geq \frac{n}{2|B|}$ and $\eta_1 \ll \beta, \eta, 1/h$ we have that $$k_x \beta q' \geq \eta|\mathcal{B}|\beta q'/4 > 3 \eta_1 n \geq |V_0|.$$
Hence we can assign greedily each vertex $x \in V_0$ to a copy $B_x$ that is useful for $x$ and do so in such a way that at most $\beta q'$ vertices in $V_0$ are assigned to the same copy $B \in \mathcal{B}$.
Then for each copy $B_x \in \mathcal{B}$ that is useful for some $x \in V_0$ we can apply Lemma~\ref{keylem} to find a copy of $H$ containing $x$ which contains no red vertices. We do this as follows:

For each $x$, since $\varepsilon \ll \alpha$ and $x$ has at least $\alpha q'$ neighbours in $r - 1$ clusters belonging to different vertex classes of $B_x$, Claim~\ref{kodegseqclaim} implies that $x$ has at least $\alpha q'/4$ blue neighbours in each of these $r - 1$ clusters. 
Further, we can find $\alpha q'/4$ blue vertices in a cluster belonging to the vertex class of $B_x$ that does not necessarily contain any neighbours of $x$. Denote this vertex class of $B_x$ by $C_x$. 
Then it is easy to see that we can find subclusters $S_1, \ldots, S_r$ of $r$ clusters in $B_x$ such that: all vertices in $S_1 \cup \ldots \cup S_{r}$ are blue vertices; $|S_i| = \alpha q'/4$ for each $i$; every vertex in $S_1 \cup \ldots \cup S_{r-1}$ is a neighbour of $x$ in $G$. By Fact~\ref{slicinglemma}, each pair $(S_i, S_j)$, $1 \leq i < j \leq r$, corresponds to an $(8\varepsilon/\alpha)$-regular 
\COMMENT{AT: corrected the numbers in these sentences}
pair in $G'$ with density at least $d/3$. 
Using Lemma~\ref{keylem} with parameters $8\varepsilon/\alpha, d/3, \alpha q'/4, h - 1$, we find a copy of $H$ containing $x$. Since each $B \in \mathcal{B}$ has been assigned to at most $\beta q'$ vertices in $V_0$ and $\beta \ll \alpha$ (from (\ref{hier})), we may repeat the above argument to find copies of $H$ that contain each exceptional vertex in such a way that the copies are disjoint and contain no red vertices. Denote the $H$-tiling induced by these copies of $H$ by $\mathcal{H}_1$. 
Remove all the vertices lying in these copies of $H$ from their respective clusters. Observe that currently \[(1-\beta h)q' \leq |V_i| \leq q'\] for each $i$.\\

\subsection{Making the blow-up of each $B \in \mathcal{B}$ divisible by $h$}\label{sectdivisible}

For a subgraph $S \subseteq R$, let $V_G(S)$ denote the union of the clusters in $S$. We aim to apply Lemma~\ref{blowup} to each $\hat{B_i}$ in $\hat{\mathcal{B}}$ to find an $H$-tiling that covers every vertex of $V_G(\hat{B_i})$.
Combining these $H$-tilings with $\mathcal{H}_1$ will result in a perfect $H$-tiling in $G$ as desired. Recall that, for each $1 \leq i \leq \hat{k}$, $\hat{G}_i$ is the $r$-partite subgraph of $G'$ whose $j$th vertex class is the union of all those clusters contained in the $j$th vertex class of $\hat{B}_i$, for each $1 \leq j \leq r$. Further, recall that $G^{*}_i$ is the complete $r$-partite graph on the same vertex set as $\hat{G}_i$. 
To apply Lemma~\ref{blowup} to each $\hat{B}_i$ in $\hat{\mathcal{B}}$ we require that each $G_i^*$ contains a perfect $H$-tiling.
To guarantee the existence of these perfect $H$-tilings we will apply Lemma~\ref{generalkolem}. To use Lemma~\ref{generalkolem} on $G_i^*$ we require that $|V(\hat{G_i^*})|$ is divisible by $h$.
When we first chose our $\hat{B}$-tiling this was the case. Indeed, as each $\hat{B_i}$ contained a perfect $H$-tiling and every cluster $V_i$ was the same size, $|V(G_i^*)|$ was divisible by $h$. However, in the last section we took out vertices from $G$ in a greedy way, changing the sizes of the clusters in $R$. Hence we cannot guarantee that $|V(G_i^*)|$ is still divisible by $h$ for each $i$. 
Now we will take out a further bounded number of copies of $H$ in $G$ to ensure $|V(G_i^*)|$ is divisible by $h$ for each $1 \leq i \leq \hat{k}$. In fact, we will ensure $|V_G(B)|$ is divisible by $h$ for each $B \in \mathcal{B}$.

We now split into two cases: when $r \geq 3$ and when $r = 2$. When $r \geq 3$ we have that $\textnormal{hcf}_{\chi}(H) = 1$ and this property will be central to our argument. For $r = 2$, we have that $\textnormal{hcf}_{c}(H) = 1$ and $\textnormal{hcf}_{\chi}(H) \leq 2$. The former property will provide us an easy way of removing copies of $H$ from $V(G)$ to ensure $|V_G(B)|$ is divisible by $h$ for each $B \in \mathcal{B}$. Further, we will not need to use the property that $\textnormal{hcf}_{\chi}(H) \leq 2$ in our argument. The only time we (implicitly) use the property that $\textnormal{hcf}_{\chi}(H) \leq 2$ will be when we apply Lemma~\ref{generalkolem}.

\subsubsection{Case 1: $r \geq 3$}\label{sectcase1}

To assist in our argument, we define an auxiliary graph $F$ whose vertices are the copies of $B$ in $\mathcal{B}$ and for $B_1, B_2 \in V(F)$, we let $B_1B_2$ be an edge in $F$ if and only if there exists a vertex $x$ in $V_R(B_1)$ and $r - 1$ vertices in $V_R(B_2)$ (or vice versa) such that these $r$ vertices induce a $K_r$ in $R$. Assume $F$ is connected and let $B_1B_2$ be an edge in $F$. Then we may apply Lemma~\ref{keylem} to find $h - 1$ disjoint copies of $H$ which each have one vertex in $V_G(B_1)$ and all other vertices in $V_G(B_2)$ (or vice versa). 
This means that we can remove at most $h - 1$ copies of $H$ to ensure $V_G(B_1)$ is divisble by $h$. Continuing in this way we can `shift the remainders mod $h$' along a spanning tree of $F$ to ensure $|V_G(B)|$ is divisible by $h$ for each $B \in \mathcal{B}$. (Indeed, since $n$ is divisible by $h$ we have that $\sum_{B \in \mathcal{B}}|V_G(B)|$ is divisible by $h$.) 

So assume $F$ is not connected. Let $\mathcal{C}$ be the set of all components of $F$. For $C \in \mathcal{C}$ we will write $V_R(C)$ for the set of clusters in $R$ belonging to copies of $B$ in $C$ and $V_G(C)$ for the union of said clusters. 
In what follows our aim is to remove a bounded number of copies of $H$ to ensure that for each component $C \in \mathcal{C}$ we have that $|V_G(C)|$ is divisible by $h$. Then we can apply our previous argument to spanning trees of each component to achieve that $|V_G(B)|$ is divisible by $h$ for each $B \in \mathcal{B}$.


Call vertices in $R$ of degree at least 
\begin{equation} \label{bigdeg} 
(1 - \omega/h + \eta/4)k 
\end{equation}
\emph{big}. If a vertex is not big, call it \emph{small}. Note by (\ref{rdegseqnew}) that all but at most $\omega k/h - 1$ vertices in $R$ are big.
\begin{claim} \label{degclaim}
Let $C_1, C_2 \in \mathcal{C}$, $C_1 \neq C_2$ and let $a \in V_R(C_2)$. Then
$$|N_R(a) \cap V_R(C_1)| < \left ( 1 - \frac{\omega + \sigma}{h} + \frac{\eta}{4} \right )|V_R(C_1)|.$$
\end{claim}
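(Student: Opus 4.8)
The plan is to argue by contradiction: suppose $a \in V_R(C_2)$ has too many neighbours in $V_R(C_1)$, and show this forces an edge of $F$ between a copy of $B$ in $C_1$ and a copy of $B$ in $C_2$, contradicting the fact that $C_1$ and $C_2$ are distinct components. First I would fix $a$ and suppose for contradiction that $|N_R(a) \cap V_R(C_1)| \geq \left(1 - \frac{\omega+\sigma}{h} + \frac{\eta}{4}\right)|V_R(C_1)|$. I would then look at how these neighbours of $a$ distribute among the copies of $B$ comprising $C_1$. Since each copy $B' \in C_1$ contains $h\hat{c}$ clusters (so $|V_R(B')| = h\hat{c} = |B|$ as a cluster-count), writing $C_1$ as a disjoint union of these copies, an averaging argument shows there must be at least one copy $B' \in C_1$ in which $a$ has at least $\left(1 - \frac{\omega+\sigma}{h} + \frac{\eta}{4}\right)|B|$ neighbours (in cluster-count terms).

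Next I would show that having this many neighbours inside a single copy of $B$ is enough to find $r-1$ clusters of $B'$, one in each of $r-1$ distinct vertex classes of $B'$, all adjacent to $a$ in $R$, together forming (with $a$) a $K_r$ in $R$. This is where the recollection that $B$ is a bottle graph with neck $\sigma\hat{c}$ and width $\omega\hat{c}$ matters: $B'$ has one vertex class of (cluster-)size $\sigma\hat{c}$ and $r-1$ classes of size $\omega\hat{c}$. A vertex missing at most $\left(\frac{\omega+\sigma}{h} - \frac{\eta}{4}\right)|B| = \left(\frac{\omega+\sigma}{h} - \frac{\eta}{4}\right)h\hat{c}$ clusters of $B'$ must therefore still have a neighbour in all but at most... — more precisely, since the neck plus one width class totals $(\sigma + \omega)\hat{c}$ clusters, the number of non-neighbours is strictly less than $(\sigma+\omega)\hat{c}$, so $a$ cannot avoid having a neighbour in every one of the $r-1$ width classes and in fact retains a neighbour in at least $r-1$ of the $r$ classes. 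One then picks one such neighbour-cluster per class to get $r-1$ clusters spanning $r-1$ distinct classes of $B'$; because $B'$ is a complete $r$-partite reduced-graph configuration these $r-1$ clusters induce a $K_{r-1}$ in $R$, and adjoining $a$ yields a $K_r$. (I should be careful to also track the vertices removed so far — at most $\beta h q'$ per cluster from $\mathcal{H}_1$ — but since we are working at the level of the reduced graph $R$, whose edges were defined before those deletions, adjacencies in $R$ are unaffected; only the hierarchy constants $\varepsilon, d$ and the $\eta/4$ slack in \eqref{rdegseqnew} are relevant here.)

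Finally, by the definition of the auxiliary graph $F$ — $B_1 B_2 \in E(F)$ iff some vertex of $V_R(B_1)$ together with $r-1$ vertices of $V_R(B_2)$ (or vice versa) induces a $K_r$ in $R$ — the configuration just constructed, consisting of $a \in V_R(C_2)$ and $r-1$ clusters in $B' \subseteq C_1$, witnesses an edge of $F$ between the copy of $B$ containing $a$ (which lies in $C_2$) and $B'$ (which lies in $C_1$). Hence $C_1$ and $C_2$ lie in the same component of $F$, contradicting $C_1 \neq C_2$. This establishes the claim. The main obstacle I anticipate is getting the arithmetic in the middle paragraph exactly right: one must verify that the threshold $\left(1 - \frac{\omega+\sigma}{h} + \frac{\eta}{4}\right)|B|$ leaves fewer than $(\sigma+\omega)\hat{c}$ missing clusters in the chosen copy $B'$, and hence that a neighbour survives in at least $r-1$ of $B'$'s vertex classes — this is exactly why the $\frac{\omega+\sigma}{h}$ (rather than $\frac{\omega}{h}$) appears in the statement, matching the "worst case" where $a$'s non-neighbours all pile into the neck class plus one width class.
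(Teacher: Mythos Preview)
Your proposal is correct and follows essentially the same approach as the paper: assume the bound fails, average to find a single copy $B_0\in C_1$ in which $a$ has at least $(1-\tfrac{\omega+\sigma}{h}+\tfrac{\eta}{4})|B|=(r-2)\omega\hat{c}+\tfrac{\eta h\hat{c}}{4}$ neighbours, deduce that $a$ has neighbours in at least $r-1$ vertex classes of $B_0$, and hence obtain a $K_r$ in $R$ witnessing an $F$-edge between $C_1$ and $C_2$. Your middle paragraph is slightly wordier than necessary (and the phrase ``cannot avoid having a neighbour in every one of the $r-1$ width classes'' is not quite what you mean), but the arithmetic and conclusion are right.
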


\begin{proof} Recall that $B$ has width $\omega\hat{c}$. Suppose Claim~\ref{degclaim} is false. Then there exists some $B_0 \in \mathcal{B}$ such that $B_0 \in C_1$ and
$$|N_R(a) \cap B_0| \geq \left(1 - \frac{\omega + \sigma}{h} + \frac{\eta}{4} \right)|B_0| = (r-2)\omega\hat{c}  + \frac{\eta h\hat{c}}{4}.$$
Thus $a$ must have neighbours in at least $r - 1$ vertex classes of $B_0$. We can therefore construct a copy of $K_r$ in $R$ which consists of $a$ together with $r - 1$ of its neighbours in $B_0$. 
But by definition of the auxiliary graph $F$, we must have that $B_0$ is adjacent in $F$ to the copy of $B$ in $\mathcal{B}$ that contains $a$. 
This contradicts that $C_1$ and $C_2$ were different components of $F$. Thus Claim~\ref{degclaim} holds.
\end{proof}

\begin{claim} \label{edgeclaim}
There exist components $C_1, C_2 \in \mathcal{C}$, $C_1 \neq C_2$, a big vertex $x_1 \in V(R)$ and another (not necessarily big) vertex $x_2 \in V(R)$  such that $x_1 \in V(C_1)$, $x_2 \in V(C_2)$ and $x_1x_2 \in E(R)$. 
\end{claim}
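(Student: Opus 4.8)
The plan is to argue by contradiction: suppose that no edge of $R$ joins a big vertex in one component of $F$ to a vertex in another component. Equivalently, every big vertex $x$ of $R$ has all of its $R$-neighbours lying inside the unique component $C(x) \in \mathcal{C}$ that contains $x$ (together with, possibly, small vertices that could in principle lie in other components — but I should first dispose of those too; in fact a cleaner phrasing is: suppose the conclusion fails, so for every edge $x_1x_2 \in E(R)$ with $x_1$ big, $x_1$ and $x_2$ lie in the same component of $F$). Pick a component $C_1 \in \mathcal{C}$ and a big vertex $x_1 \in V(C_1)$ — such a component and vertex exist because $F$ has at least two components while at most $\omega k/h - 1 < k/2$ vertices of $R$ are small, so some component contains a big vertex. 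Under the contradiction hypothesis, $N_R(x_1) \subseteq V_R(C_1)$, hence
$$d_R(x_1) \leq |V_R(C_1)| - 1.$$

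Next I would lower-bound $d_R(x_1)$ and upper-bound $|V_R(C_1)|$ to reach a contradiction. Since $x_1$ is big, $d_R(x_1) \geq (1 - \omega/h + \eta/4)k$ by~(\ref{bigdeg}). For the upper bound on $|V_R(C_1)|$, I would count the non-neighbours of $x_1$ in $R$ outside $C_1$: by Claim~\ref{degclaim}, for the vertex $x_1 \in V_R(C_1)$ and any other component $C' \neq C_1$, at least a $\left(\tfrac{\omega+\sigma}{h} - \tfrac{\eta}{4}\right)$-proportion of the clusters of $C'$ are non-neighbours of $x_1$ (applying the claim with the roles set so that $x_1$ plays the role of $a \in V_R(C_2)$ and $C'$ plays the role of $C_1$). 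Summing over all components $C' \neq C_1$, the number of vertices of $R$ outside $C_1$ is at most $\frac{1}{\,\tfrac{\omega+\sigma}{h}-\tfrac{\eta}{4}\,}$ times the number of non-neighbours of $x_1$ outside $C_1$; but under the contradiction hypothesis $x_1$ has \emph{no} neighbours outside $C_1$, so all of $k - |V_R(C_1)|$ vertices outside $C_1$ are non-neighbours, giving no restriction directly that way. Instead I would use the complementary count: the number of \emph{neighbours} of $x_1$ that lie outside $C_1$ is at most $\left(1 - \tfrac{\omega+\sigma}{h} + \tfrac{\eta}{4}\right)(k - |V_R(C_1)|)$ by Claim~\ref{degclaim} applied component-by-component; combined with $d_R(x_1) \leq |N_R(x_1) \cap V_R(C_1)| + |N_R(x_1) \setminus V_R(C_1)| \leq (|V_R(C_1)| - 1) + \left(1 - \tfrac{\omega+\sigma}{h} + \tfrac{\eta}{4}\right)(k - |V_R(C_1)|)$ and with the big lower bound $d_R(x_1) \geq (1 - \omega/h + \eta/4)k$, one obtains an inequality in $|V_R(C_1)|$ that forces $|V_R(C_1)|$ to be very large — specifically, rearranging yields
$$\left(\frac{\omega + \sigma}{h} - \frac{\eta}{4}\right)|V_R(C_1)| \geq \left(\frac{\sigma}{h} - \frac{\eta}{2}\right)k + \frac{\omega+\sigma}{h}k - k + \text{(lower order)},$$
so $|V_R(C_1)| \geq (1 - o(1))k$. (Here I am folding the contradiction hypothesis directly into the bound by also using that $x_1$ has no neighbour outside $C_1$, so actually $d_R(x_1) \leq |V_R(C_1)| - 1$, which immediately gives $|V_R(C_1)| \geq (1 - \omega/h + \eta/4)k + 1$.)

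Now I have a big component $C_1$ with $|V_R(C_1)| \geq (1 - \omega/h + \eta/4)k + 1$. Since $F$ has at least two components, some other component $C_2$ is nonempty, and $|V_R(C_2)| \leq k - |V_R(C_1)| \leq (\omega/h - \eta/4)k - 1 < \omega k/h$. Every vertex $a$ of $R$ has $d_R(a) \geq d_{R,1} \geq \left(1 - \tfrac{\omega+\sigma}{h}\right)k + \tfrac{\sigma}{\omega} + \tfrac{\eta k}{4} > \left(1 - \tfrac{\omega+\sigma}{h} + \tfrac{\eta}{4}\right)k$ by~(\ref{rdegseqnew}) (using $i = 1$). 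Pick any vertex $a \in V_R(C_2)$. By Claim~\ref{degclaim}, $|N_R(a) \cap V_R(C_1)| < \left(1 - \tfrac{\omega+\sigma}{h} + \tfrac{\eta}{4}\right)|V_R(C_1)| \leq \left(1 - \tfrac{\omega+\sigma}{h} + \tfrac{\eta}{4}\right)k$, and similarly for every component $C' \neq C_2$. Hence the neighbours of $a$ in components other than $C_2$ number fewer than $\left(1 - \tfrac{\omega+\sigma}{h} + \tfrac{\eta}{4}\right)k$, so $a$ must have at least $d_R(a) - \left(1 - \tfrac{\omega+\sigma}{h} + \tfrac{\eta}{4}\right)k > 0$ neighbours inside $V_R(C_2)$ — but more usefully, $|V_R(C_2)| \geq d_R(a) - (\text{neighbours of } a \text{ outside } C_2)$. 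To make the contradiction sharp I would instead compare directly: $d_R(a) \leq (|V_R(C_2)| - 1) + \sum_{C' \neq C_2}|N_R(a) \cap V_R(C')| < |V_R(C_2)| + \left(1 - \tfrac{\omega+\sigma}{h} + \tfrac{\eta}{4}\right)(k - |V_R(C_2)|)$, which rearranges to $\left(\tfrac{\omega+\sigma}{h} - \tfrac{\eta}{4}\right)|V_R(C_2)| > \left(\tfrac{\omega+\sigma}{h} - \tfrac{\eta}{2}\right)k$, forcing $|V_R(C_2)| > (1-o(1))k$ as well. This contradicts $|V_R(C_1)| + |V_R(C_2)| \leq k$ since both must then exceed $k/2$. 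Therefore the contradiction hypothesis fails, and there is an edge of $R$ from a big vertex of one component to a vertex of another, as required.

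\textbf{Main obstacle.} The routine work is all in the bookkeeping of the constant $\eta/4$ and ensuring the two inequalities — the one for $C_1$ forced by the contradiction hypothesis, and the one for $C_2$ forced by Claim~\ref{degclaim} plus the minimum-degree bound~(\ref{rdegseqnew}) — genuinely overlap in a way that makes $|V_R(C_1)|$ and $|V_R(C_2)|$ both larger than $k/2$; one must check that $\tfrac{\sigma}{h} > \tfrac{\eta}{2}$-type margins survive, which they do since $\eta$ can be taken small relative to $\sigma/h$ by~(\ref{hier}). The one genuine subtlety is the correct application of Claim~\ref{degclaim}: it is stated for a single vertex $a$ in one component and bounds its neighbourhood into another \emph{single} component, so when summing over all other components one must be careful that the bound $\left(1 - \tfrac{\omega+\sigma}{h} + \tfrac{\eta}{4}\right)|V_R(C')|$ sums to $\left(1 - \tfrac{\omega+\sigma}{h} + \tfrac{\eta}{4}\right)$ times the total number of clusters outside the home component — which it does, since the components partition $V(R)$. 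I expect no deeper difficulty.
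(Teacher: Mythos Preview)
Your argument is correct up to the point where you establish $|V_R(C_1)| \geq (1-\omega/h+\eta/4)k+1$, and hence $|V_R(C_2)| \leq (\omega/h-\eta/4)k-1$. The final step, however, contains an arithmetic error. Writing $\beta := 1-\tfrac{\omega+\sigma}{h}+\tfrac{\eta}{4}$, your upper bound is $d_R(a) < |V_R(C_2)| + \beta(k-|V_R(C_2)|)$ and your lower bound is $d_R(a) > \beta k$; combining these yields only $(1-\beta)|V_R(C_2)| > 0$, which is trivially true, not the claimed $(1-\beta)|V_R(C_2)| > (1-\beta-\eta/4)k$. The problem is structural: Claim~\ref{degclaim} holds for \emph{every} vertex $a$ irrespective of your contradiction hypothesis, so applying it to $a\in V_R(C_2)$ cannot by itself produce a contradiction --- you have not used the hypothesis at all in this half of the argument.

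The fix is to invoke the contradiction hypothesis again for $C_2$. If $C_2$ contains a big vertex $b$, then the hypothesis forces $N_R(b)\subseteq V_R(C_2)$, giving $|V_R(C_2)|>(1-\omega/h+\eta/4)k$; together with the same bound for $C_1$ this exceeds $k$ (since $\omega/h<1/2$ when $r\geq 3$), a contradiction. Otherwise every vertex of $C_2$ is small. Pick $a\in V_R(C_2)$: any neighbour of $a$ outside $C_2$ must also be small (a big such neighbour $w$ would give the edge $wa$ satisfying the claim), so \emph{all} neighbours of $a$ are small, whence $d_R(a)\leq \omega k/h-2$. But $d_R(a)\geq (1-(\omega+\sigma)/h+\eta/4)k\geq(\omega/h+\eta/4)k$ since $r\geq 3$, a contradiction. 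This is exactly the paper's argument, just phrased contrapositively; the paper does it directly by taking a small $z$ outside the large component $C_x$, noting $z$ must have a big neighbour (there are fewer than $\omega k/h$ small vertices while $d_R(z)\geq(\omega/h+\eta/4)k$), and that big neighbour must lie in $C_x$.
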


\begin{proof} Take some big vertex $x \in V(R)$. 
Then $x$ is in $V_R(C_x)$ for some component $C_x$ of $F$. 
Assume $|C_x| \geq (1 - \omega/h + \eta/4)k$, as otherwise $x$ has a neighbour in $R$ outside of $C_x$ and we are done. Since $r \geq 3$, 
$$|R \setminus V_R(C_x)| \leq (\omega/h - \eta/4)k < (1 - \omega/h + \eta/4)k.$$

If $R \setminus V_R(C_x)$ contains any big vertex $y$, then $y$ has a neighbour in $V_R(C_x)$ since $|R \setminus V_R(C_x)| < (1 - \omega/h + \eta/4)k$ and we are done. Hence assume all big vertices are in $V_R(C_x)$. 
Then all vertices in $R \setminus V_R(C_x)$ are small vertices.
Let $z$ be a small vertex in $R \setminus V_R(C_x)$. Since $r \geq 3$, $$d_R(z) \geq (1 - (\omega + \sigma)/h + \eta/4)k \geq (\omega/h + \eta/4)k.$$ 
Since there are at most $\omega k/h - 1$ small vertices in $R$, we have that $z$ has a neighbour $w$ which is a big vertex. But then $w \in V_R(C_x)$. Thus Claim~\ref{edgeclaim} holds.
\end{proof}

\begin{claim} \label{krclaim}
There exists a copy $K'$ of $K_r$ in $R$ which has vertices in at least two components of $F$.
\end{claim}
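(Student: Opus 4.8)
The plan is to bootstrap from Claim~\ref{edgeclaim}. That claim already hands us two distinct components $C_1, C_2 \in \mathcal{C}$, a \emph{big} vertex $x_1 \in V(C_1)$, and a vertex $x_2 \in V(C_2)$ with $x_1 x_2 \in E(R)$. The goal is to promote this single cross-component edge into a copy of $K_r$ that still straddles (at least) two components. The natural strategy is to greedily extend $\{x_1,x_2\}$ one vertex at a time into a clique, using the large degree of $x_1$ together with the degree-sequence bound (\ref{rdegseqnew}) to guarantee enough common neighbours at each step; since $x_1$ is big and big vertices are abundant, we will always be able to find a common neighbour of the partial clique, and the resulting $K_r$ necessarily meets $C_1$ (it contains $x_1$) and is not wholly inside $C_1$ (it contains $x_2 \in C_2$).

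In more detail, first I would set up the iteration: suppose we have built a clique $\{x_1, x_2, y_3, \ldots, y_j\}$ in $R$ with $3 \le j \le r-1$, where $y_3, \ldots, y_j$ are all \emph{big} vertices. We want a vertex $y_{j+1}$ adjacent to all of $x_1, x_2, y_3, \ldots, y_j$. Each big vertex has at least $(1 - \omega/h + \eta/4)k$ neighbours by (\ref{bigdeg}), and $x_2$ has degree at least $(1 - (\omega+\sigma)/h + \eta/4)k$ by (\ref{rdegseqnew}) (using that $x_2$, even if small, still satisfies the degree-sequence bound, and $r \ge 3$ so $(\omega+\sigma)/h \le 2\omega/h$... more precisely one uses $d_{R,i} \ge (1-(\omega+\sigma)/h)k + \tfrac{\sigma}{\omega}i + \eta k/4 \ge (1-(\omega+\sigma)/h)k+\eta k/4$). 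Intersecting the $j$ neighbourhoods: the $j-1$ big vertices and $x_2$ together miss at most
\[
(j-1)\left(\frac{\omega}{h} - \frac{\eta}{4}\right)k + \left(\frac{\omega+\sigma}{h} - \frac{\eta}{4}\right)k
\]
vertices, and since $j \le r-1$ this is at most $\left(\frac{r\omega}{h} + \frac{\sigma}{h} - \frac{\eta}{4}\right)k < k$ because $r\omega + \sigma = h + (r-1)(\omega - \sigma) \cdot$... here I would double-check the arithmetic using $\omega = (h-\sigma)/(r-1)$, which gives $(r-1)\omega = h - \sigma$, so $r\omega = h - \sigma + \omega$ and $r\omega + \sigma = h + \omega$; thus the bound is $\big(\tfrac{h+\omega+\sigma}{h} - \tfrac{\eta}{4}\big)k$, which unfortunately \emph{exceeds} $k$. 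So a naive greedy extension using the worst-case degrees does not immediately work, and this is exactly where the main obstacle lies.

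To get around this I would instead exploit more carefully that $x_1$ is big \emph{and} that big vertices dominate $R$ (all but at most $\omega k/h - 1$ vertices are big, by the remark after (\ref{bigdeg})): extend the clique using only big vertices at each step and track the common neighbourhood restricted to big vertices. A cleaner route — and probably the one the authors take — is to observe that we only need $K_r$ meeting two components, not $K_r$ containing the specific edge $x_1x_2$; so one can argue by contradiction. If no such $K_r$ exists, then for the big vertex $x_1$ its neighbourhood $N_R(x_1)$, which has size $\ge (1-\omega/h+\eta/4)k$, must induce a graph whose every $K_{r-1}$ lies entirely within $V_R(C_1)$ (else together with $x_1$ we'd get a cross-component $K_r$); combined with Claim~\ref{degclaim} (which bounds how many neighbours a vertex can have in a \emph{different} component by $(1-(\omega+\sigma)/h+\eta/4)|V_R(C_1)|$) and a Turán-type / Hajnal--Szemerédi-flavoured counting on $R$ using (\ref{rdegseqnew}), one derives that $N_R(x_1)$ cannot avoid containing a $K_{r-1}$ with a vertex outside $C_1$. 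The hard part will be making this last counting step precise: one needs that a graph on roughly $(1-\omega/h)k$ vertices with the inherited degree sequence (\ref{rdegseqnew}) — restricted to $N_R(x_1)$ — contains a copy of $K_{r-1}$ that is not confined to the cluster-set $V_R(C_1)$, which should follow from the fact that $|V_R(C_1) \cap N_R(x_1)|$ is too small to host all the $K_{r-1}$'s forced by the minimum-degree-like condition on $N_R(x_1)$, again using $r\omega+\sigma = h+\omega$ and the slack $\eta/4$. I would expect the write-up to invoke Claim~\ref{edgeclaim} and Claim~\ref{degclaim} to localise the problem and then finish with an elementary degree count rather than a black-box clique-tiling theorem.
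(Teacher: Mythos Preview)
Your first approach---greedy extension from the edge $x_1x_2$ supplied by Claim~\ref{edgeclaim}---is exactly what the paper does, and it \emph{does} work; you abandoned it because of an off-by-one arithmetic slip. With a clique $\{x_1,x_2,y_3,\dots,y_j\}$ of size $j$ (so $j-1$ big vertices plus $x_2$), the total miss is
\[
(j-1)\Bigl(\tfrac{\omega}{h}-\tfrac{\eta}{4}\Bigr)k+\Bigl(\tfrac{\omega+\sigma}{h}-\tfrac{\eta}{4}\Bigr)k
=\Bigl(\tfrac{j\omega+\sigma}{h}-\tfrac{j\eta}{4}\Bigr)k,
\]
and for $j\le r-1$ one has $j\omega+\sigma\le(r-1)\omega+\sigma=h$, not $r\omega+\sigma=h+\omega$ as you wrote. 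Hence the common neighbourhood has size at least $\bigl(\tfrac{(r-1-j)\omega}{h}+\tfrac{j\eta}{4}\bigr)k$. For $j\le r-2$ this exceeds $\omega k/h$, so (since there are at most $\omega k/h-1$ small vertices) you may choose $y_{j+1}$ big; for $j=r-1$ it is still at least $(r-1)\eta k/4>0$, so you can pick any $y_r$ and the resulting $K_r$ contains $x_1\in C_1$ and $x_2\in C_2$. This is precisely the paper's argument.

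Your fallback---a contradiction scheme combining Claim~\ref{degclaim} with a Tur\'an-type count inside $N_R(x_1)$---is therefore unnecessary, and in any case you left it as a sketch without the decisive inequality. The genuine gap in your write-up is the arithmetic error, not any missing idea.
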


\begin{proof} By Claim~\ref{edgeclaim}, there exist components $C_1, C_2 \in \mathcal{C}$, a big vertex $x_1 \in V(R)$ and another vertex $x_2 \in V(R)$ such that $x_1 \in V_R(C_1)$, $x_2 \in V_R(C_2)$ and $x_1x_2 \in E(R)$. By (\ref{rdegseqnew}) and (\ref{bigdeg}), $x_1$ and $x_2$ have a common neighbourhood of size at least 
$$((r-3)\omega/h + \eta/2)k.$$
If $r = 3$, then we choose $x_3$ in the common neighbourhood of $x_1$ and $x_2$, and we are done.
So assume $r \geq 4$. Since there are at most $\omega k/h$ small vertices, we can choose a big vertex $x_3$ in the common neighbourhood of $x_1$ and $x_2$. 
Then $x_1, x_2$ and $x_3$ have a common neighbourhood of size at least 
$$((r-4)\omega/h + 3\eta/4)k.$$
If $r = 4$, then we choose $x_4$ in the common neighbourhood of $x_1, x_2$ and $x_3$ and we are done. Otherwise $r \geq 5$ and we continue as before. Thus Claim~\ref{krclaim} holds.
\end{proof}

For such a copy $K'$ of $K_r$ in $R$, we now show that we can take out a bounded number of copies of $H$ from the clusters corresponding to the vertices of $K'$ in such a way that that leaves one of the components $C \in \mathcal{C}$ with $|V_G(C)|$ divisible by $h$. We use Theorem~\ref{partitionthm} and Lemma~\ref{keylem} to achieve this.
We will then repeat this process to ensure $|V_G(B)|$ is divisible by $h$ for each $B \in \mathcal B$.
\COMMENT{AT: be careful how you explain things! I've added this sentence as by itself, what you wrote will have confused the reader. That is, the sentences you wrote didn't precisely match up with the statement of the claim.}

\begin{claim}
There exists $t \in \mathbb{N}$ such that by removing at most $t + (|\mathcal{B}| - |\mathcal{C}|)(h - 1)$ copies of $H$ from $G$ we can ensure $|V_G(B)|$ is divisible by $h$ for each $B \in \mathcal{B}$. 
\end{claim}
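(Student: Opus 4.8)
The idea is to combine Claim~\ref{krclaim}, Theorem~\ref{partitionthm}, and Lemma~\ref{keylem} to repeatedly ``fix'' the remainders modulo $h$ of the components of $F$, and then use the connected case to finish within each component. Fix a copy $K'$ of $K_r$ in $R$ with vertices in (at least) two components, say with vertex $a$ in component $C_2$ and the other $r-1$ vertices in component $C_1$ (after relabelling, if $K'$ meets more than two components we just pick two of the parts). By Lemma~\ref{keylem} applied inside the $r$-partite-like structure given by $K'$ (using that each edge of $K'$ corresponds to an $\varepsilon$-regular pair of density at least $d$, via Fact~\ref{slicinglemma} to pass to blue subclusters if we wish to avoid red vertices), for \emph{any} optimal colouring $c\in C_H$ and any way of assigning the $r$ colour classes of $H$ to the $r$ clusters of $K'$, we can find a copy of $H$ with exactly one colour class (of size $x_{c,j}$ for the chosen $j$) placed in the cluster corresponding to $a$ and the remaining $r-1$ colour classes placed in the clusters of $C_1$. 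More generally, for each $p$-subset $A_{p,c,J}$ of $D_c$ with $p=r-1$ here — wait, we actually want $p=1$: taking $p:=1$ in Theorem~\ref{partitionthm}, the $S_{1,c,J}$ are just the individual colour class sizes $x_{c,J}$, and removing such a copy of $H$ changes $|V_G(C_1)|$ by $+x_{c,J}$ (the part sitting in $C_1$) relative to... hmm, let me restate: removing the copy of $H$ decreases $|V_G(C_2)|$ by $x_{c,J}$ and $|V_G(C_1)|$ by $h-x_{c,J}$, so modulo $h$ it shifts the residue of $|V_G(C_1)|$ by $x_{c,J}$ and that of $|V_G(C_2)|$ by $-x_{c,J}$.

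Now apply the $r\geq 3$ case of Theorem~\ref{partitionthm} with $p=1$: since $\textnormal{hcf}_\chi(H)=1$, there exist non-negative integers $a_{1,c,i}\le \bar a$ (for $c\in C_H$, $1\le i\le z_1=r$) with $\sum_{c,i}a_{1,c,i}S_{1,c,i}\equiv 1 \pmod h$. Removing, for each $c\in C_H$ and each $i$, exactly $a_{1,c,i}$ disjoint copies of $H$ of colouring $c$ with the $i$-th colour class in the cluster of $a$ (all other classes in $C_1$) — which we can do greedily by Lemma~\ref{keylem}, as only $O(1)$ copies are removed and the clusters have size $\Theta(q')$ — decreases $|V_G(C_2)|$ by $\sum a_{1,c,i}x_{c,i}\equiv 1\pmod h$ and increases... no: it changes the residue of $|V_G(C_1)|\bmod h$ by exactly $+1$ and that of $|V_G(C_2)|$ by $-1$, while all other components' residues are unchanged (the copies are vertex-disjoint from other components, and $n$ divisible by $h$ keeps the total $\equiv 0$). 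Thus one ``round'' transfers a residue unit from $C_2$ to $C_1$. Since $\mathcal C$ is connected when contracted through the edges guaranteed by Claim~\ref{krclaim} (iterate: after merging the residue of $C_2$ into $C_1$ we still have the remaining components, and Claims~\ref{edgeclaim}--\ref{krclaim} apply again to the updated partition until only components with residue $0$ remain — more carefully, repeatedly pick two components joined by such a $K_r$ and transfer the entire residue of one into the other, using at most $\bar a\cdot|C_H|\cdot r \cdot (h-1) =: t_0$ copies of $H$ per transfer and at most $|\mathcal C|-1$ transfers), we can reach a state where $|V_G(C)|\equiv 0\pmod h$ for every $C\in\mathcal C$. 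Set $t := (|\mathcal C|-1)\,t_0$, a constant depending only on $H$.

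Finally, within each component $C$, run the connected-graph argument described just before the claim: along a spanning tree of $C$ (viewed as a subgraph of the auxiliary graph $F$), for each edge $B_1B_2$ use Lemma~\ref{keylem} to remove up to $h-1$ copies of $H$ each with one vertex in $V_G(B_1)$ and the rest in $V_G(B_2)$, shifting the residue of $|V_G(B_1)|\bmod h$ as needed; since $|V_G(C)|$ is already divisible by $h$, sweeping the tree leaves $|V_G(B)|$ divisible by $h$ for every $B$ in $C$. This uses at most $(|\mathcal B|-|\mathcal C|)(h-1)$ copies of $H$ in total. Altogether we have removed at most $t + (|\mathcal B|-|\mathcal C|)(h-1)$ copies of $H$, as claimed. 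Throughout, each removal can be arranged to avoid the red vertices by first passing to the blue subclusters via Fact~\ref{slicinglemma} before invoking Lemma~\ref{keylem}, exactly as in Section~\ref{sectabsorb}, and the bounded total number of removed vertices does not disturb the $(5\varepsilon,d/5)$-superregularity of the pairs inside each $\hat B_i$.

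The main obstacle is bookkeeping: verifying that one ``round'' of removals changes residues of exactly the two intended components (and not others) by exactly $\mp 1\bmod h$, which hinges on choosing $p=1$ in Theorem~\ref{partitionthm} so that $S_{1,c,i}=x_{c,i}$ and on the combinatorial identity $\sum_i x_{c,i}=h$; and ensuring Lemma~\ref{keylem}'s hypotheses ($\varepsilon$ small relative to $d$, and $t-1\le\varepsilon_0 q$) are met when we extract these $O(1)$ disjoint copies of $H$ greedily from the relevant $r$ clusters.
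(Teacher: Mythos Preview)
Your overall architecture matches the paper's: use Claim~\ref{krclaim} together with Theorem~\ref{partitionthm} and Lemma~\ref{keylem} to fix the residue of one component of $F$ at a time, then run the spanning-tree ``shift'' argument inside each component. However, there is a genuine gap in how you invoke Theorem~\ref{partitionthm}.

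You assume the copy $K'$ of $K_r$ produced by Claim~\ref{krclaim} has exactly one vertex in $C_2$ and the remaining $r-1$ vertices in $C_1$, and then you apply Theorem~\ref{partitionthm} with $p=1$. But Claim~\ref{krclaim} gives no such control: it only says $K'$ meets at least two components. Even when $K'$ meets exactly two components, the split can be $p$ versus $r-p$ for any $1\le p\le r-1$. Your parenthetical ``if $K'$ meets more than two components we just pick two of the parts'' does not repair this, because once you single out a component $C^*$ that contains $p$ of the clusters of $K'$, every copy of $H$ you embed in those $r$ clusters necessarily has exactly $p$ of its colour classes inside $V_G(C^*)$; you cannot force $p=1$ by relabelling. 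This is precisely why the paper proves Theorem~\ref{partitionthm} for \emph{every} $1\le p\le r-1$ and then, in the proof of the claim, lets $p$ be the number of vertices of $K'$ lying in the chosen component $C^*$ and uses the integers $a_{p,c,i}$ and the sums $S_{p,c,i}$ for that particular $p$.

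A second, smaller point: after fixing one component $C^*$ you need to iterate, and for that you must argue that the claims analogous to Claims~\ref{edgeclaim}--\ref{krclaim} still apply. The paper does this by deleting $V_R(C^*)$ from $R$ to form $R_1$ and using Claim~\ref{degclaim} to show $R_1$ inherits the degree-sequence condition (\ref{rdegseqnew}); your sketch (``Claims~\ref{edgeclaim}--\ref{krclaim} apply again to the updated partition'') is correct in spirit but you should say explicitly that Claim~\ref{degclaim} is what lets the degree sequence survive the deletion.
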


\begin{proof}
Firstly, for each component $C \in \mathcal{C}$ we will remove copies of $H$ to ensure $|V_G(C)|$ is divisible by $h$. Apply Claim~\ref{krclaim} to find a copy $K'$ of $K_r$ in $R$ which has vertices in at least two components of $F$.
Let $C^{*}$ be a component of $F$ which contains at least one vertex of $K'$. Let $p$ be the number of vertices of $K'$ contained in $C^{*}$ and observe that $1 \leq p \leq r-1$.
Let $0 \leq g \leq h-1$ such that $|V_G(C^{*})| \equiv g\mod h$. If $g = 0$ then $|V_G(C^{*})|$ is divisible by $h$ and we consider the graphs $F_1 := F - V(C^{*})$ and $R_1 := R - V_R(C^{*})$. So assume $1 \leq g \leq h-1$.
Observe that we can apply Lemma~\ref{keylem} to find any bounded number of disjoint copies of $H$ in $G$ in the clusters of $K'$ (see the end of Section~\ref{sectabsorb}).
For any copy $H'$ of $H$ in $G$ in the clusters of $K'$ there are precisely $p$ colour classes of some colouring $c$ of $H'$ contained in the clusters of $K'$ in $V_G(C^{*})$. Moreover, given any colouring $c$ of $H$ and $p$-subset $P$ contained in $D_c$ we can find any bounded number of disjoint copies $H'$ of $H$ in $G$ with colouring $c$ in the clusters of $K'$ so that the colour classes of $H'$ in $V_G(C^*)$ correspond to the $p$-subset $P$. Moreover there exists $j \in \{1, \ldots, z_p\}$ such that $P = A_{p,c,j}$
(recall this notation from Definition~\ref{defy}). Thus removing such a copy $H'$ of $H$ from $G$ would result in removing precisely $S_{p,c,j}$ vertices from $V_G(C^*)$. 
By Theorem~\ref{partitionthm}, there exist a collection of non-negative integers $\{a_{p,c,i}: c \in C_H, 1 \leq i \leq z_p\}$ and $\bar{a} \in \mathbb{N}$ such that
$$a_{p,c,i} \leq \bar{a} \ \ \mbox{for all} \ c \in C_H, 1 \leq i \leq z_p,$$ and
$$g \cdot \sum_{c \in C_H}\sum_{i = 1}^{z_p}a_{p,c,i}S_{p,c,i} \equiv g\mod h.$$ 
Hence we can remove
$$g \cdot \sum_{c \in C_H}\sum_{i = 1}^{z_p}a_{p,c,i} 
\leq (h-1)\bar{a}|C_H|z_p$$ 
suitable disjoint copies of $H$ in $G$ in the clusters of $K'$ to make $|V_G(C^{*})|$ divisible by $h$.\COMMENT{AT: again added details to help the reader some more}

Next we consider graphs $F_1 := F - V(C^{*})$ and $R_1 := R - V_R(C^{*})$.
Let $k_1 := |R_1|$. Claim~\ref{degclaim} and (\ref{rdegseqnew}) together give us that $R_1$ has degree sequence $d_{R_1, 1} \leq \ldots \leq d_{R_1, k_1}$ where
 \begin{equation*}\label{r1degseqnew}
 d_{R_1,i} \geq \left(1 - \frac{\omega+\sigma}{h}\right)k_1 + \frac{\sigma}{\omega}i + \frac{\eta k_1}{4} \ \ \mbox{for all $1 \leq i \leq \frac{\omega k_1}{h}$.}
\end{equation*}

Suppose $|\mathcal{C}| \geq 3$. Arguing as in Claims~\ref{edgeclaim} and~\ref{krclaim} we can find a copy $K_1'$ of $K_r$ in $R_1$ which has vertices in at least two components of $F_1$.
\COMMENT{AT: again made things more precise here. We cannot apply Claim~\ref{krclaim} to find a copy $K_1'$ of $K_r$ in $R_1$ as it is not a claim about $R_1$. However, (\ref{r1degseqnew}) does allow us to argue analogously to Claims 7.3--7.4}
 Let $C^{**}$ be a component of $F$ which contains at least one vertex of $K_1'$. 
As before by removing at most $(h-1)\bar{a}|C_H|z_p$ copies of $H$ from the clusters of $K'_1$ we can make $|V_G(C^{*})|$ divisible by $h$.
Since $|G|$ is divisible by $h$, we can continue in this way to make $|V_G(C)|$ divisible by $h$ for each component $C \in \mathcal{C}$. We then apply the `shifting the remainders mod $h$' argument mentioned earlier during the `$F$ connected' case to guarantee that $|B|$ is divisible by $h$ for each $B \in \mathcal{B}$.
In this process we removed at most $(|\mathcal{C}| - 1)(h-1)\bar{a}|C_H|z_p$ disjoint copies of $H$ from $G$. 
Each time we use the `shifting the remainders mod $h$' argument on a connected component $C \in \mathcal{C}$ we remove at most $(|C| - 1)(h - 1)$ disjoint copies of $H$ in $G$. 
Hence overall we remove at most $(|\mathcal{C}| - 1)(h-1)\bar{a}|C_H|z_p + (|\mathcal{B}| - |\mathcal{C}|)(h - 1)$ disjoint copies of $H$ in $G$. Denote this $H$-tiling (formed from these copies of $H$) by $\mathcal{H}_2$.
\end{proof}

Observe that now $$(1 - 2h\beta)q' \leq |V_i| \leq q'$$ for each $i$ since we only removed a bounded number of vertices from $G$.

\subsubsection{Case 2: $r = 2$}\label{sectcase2}
As in the statement of Theorem~\ref{partitionthm}, let $b$ be the number of components of $H$ and $t_1, \ldots, t_b$ be the sizes of the components of $H$. By Theorem~\ref{partitionthm}, there exists a collection of non-negative integers $\{a_{i}:  1 \leq i \leq b\}$ and $\bar{a} \in \mathbb{N}$ such that $$a_i \leq \bar{a} \ \ \mbox{for all} \ \ 1 \leq i \leq b,$$  and $$\sum_{i = 1}^{b}a_it_i \equiv 1\mod h.$$
Let $B_1, B_2 \in \mathcal{B}$.
If $|V_G(B_1)| \equiv 0\mod h$, define $\mathcal{B}_1 := \mathcal{B}\setminus B_1$. If not, let $p \in \{1, \ldots, h - 1\}$ such that $|V_G(B_1)| \equiv p\mod h$. 
Remove $p \sum_{i = 1}^{b}a_i$ copies of $H$ from $V_G(B_1) \cup V_G(B_2)$ in the following way: For each $1 \leq i \leq b$, remove $pa_i$ copies of $H$ from $V_G(B_1) \cup V_G(B_2)$ such that the component of order $t_i$ is in $V_G(B_1)$ and all other components are in $V_G(B_2)$.\footnote{We use Lemma~\ref{keylem} to do this.} 
Since $p \sum_{i = 1}^{b}a_it_i \equiv p\mod h$, by removing these $p \sum_{i = 1}^{b}a_i$ copies of $H$ from $V_G(B_1) \cup V_G(B_2)$ we now have that $|V_G(B_1)|$ is divisible by $h$. Define $\mathcal{B}_1 := \mathcal{B}\setminus B_1$. 

Let $B_1', B_2' \in \mathcal{B}_1$. If $|V_G(B_1')| \equiv 0\mod h$, define $\mathcal{B}_2 := \mathcal{B}_1\setminus B_1'$. If not, let $p' \in \{1, \ldots, h-1\}$ such that $|V_G(B_1')| \equiv p'\mod h$. Remove $p'\sum_{i = 1}^{b}a_i$ copies of $H$ from $V_G(B_1') \cup V_G(B_2')$ in the same way as before. Define $\mathcal{B}_2 := \mathcal{B}_1\setminus B_1'$. Continuing in the same way, we see that by removing at most 

\begin{equation}\label{requals2eqn}
    (|\mathcal{B}| - 1)(h - 1)b\bar{a}
\end{equation} 
copies of $H$ we can ensure that $|B|$ is divisible by $h$ for each $B \in \mathcal{B}$.  
Denote this $H$-tiling (formed from these copies of $H$) by $\mathcal{H}_2$.

Observe that now $$(1 - 2h\beta)q' \leq |V_i| \leq q'$$ for each $i$ since we only removed a bounded number of vertices.

\subsection{Completing the perfect tiling}\label{sectblowup}

As we related at the beginning of Section~\ref{sectdivisible}, we aim to apply Lemma~\ref{blowup} to each $\hat{B}_{i} \subseteq R$ ($1 \leq i \leq \hat{k}$) where the vertices of $R$ are the now modified clusters -- modified by the removing of copies of $H$ in previous sections.
Recall that, for each $1 \leq i \leq \hat{k}$, $\hat{G}_i$ is the $r$-partite subgraph of $G'$ whose $j$th vertex class is the union of all those clusters contained in the $j$th vertex class of $\hat{B}_i$, for each $1 \leq j \leq r$. 
Observe that in Section~\ref{sectdivisible} we made $|\hat{G}_i| = |V_G(\hat{B}_i)|$ divisible by $h$ for each $i$.
Further, $$ (1 - 2h\beta)q' \leq |V_i| \leq q'$$ for each $i$.
Recall that $G^{*}_i$ is the complete $r$-partite graph on the same vertex set as $\hat{G}_i$. Since $0 < 2h\beta \ll \sigma/\omega, 1 - \sigma/\omega, 1/h$ by (\ref{hier}), we can apply Lemma~\ref{generalkolem} to conclude that each $G^{*}_i$ contains a perfect $H$-tiling. 

Furthermore, pairs of clusters that correspond to edges of $\hat{B}_i$ are still $(6\varepsilon, d/6)$-superregular.
Indeed, in Section~\ref{sectabsorb} we removed copies of $H$ which avoided red vertices, resulting in each pair of clusters (in a copy of $H$) being $(5\varepsilon, d/5)$-superregular. Then, in Section~\ref{sectcase1}, or Section~\ref{sectcase2} if $r = 2$, we removed only a constant number of vertices from each cluster. Hence each pair of clusters (in a copy of $H$) is $(6\varepsilon, d/6)$-superregular. 

We now have all we need to apply Lemma~\ref{blowup} to find a perfect $H$-tiling $\hat{\mathcal{H}}_i$ in $\hat{G}_i$ for each $1 \leq i \leq \hat{k}$.
Then $$\mathcal{H}_1 \cup \mathcal{H}_2 \cup \hat{\mathcal{H}}_1 \cup \ldots \cup \hat{\mathcal{H}}_{\hat{k}}$$ is a perfect $H$-tiling in $G$. Hence we have proved Theorem~\ref{mainthmsigma}.

\begin{section}{Acknowledgements}
The first author would like to thank Pat Devlin for a helpful conversation at Building Bridges II.
\end{section}

\end{document}